\documentclass[]{article}
\RequirePackage[OT1]{fontenc}
\RequirePackage{float}
\usepackage{amsmath}
\usepackage{amssymb}
\usepackage{bm}
\usepackage{setspace}
\usepackage[margin=1.1in]{geometry}
\usepackage[utf8]{inputenc}
\usepackage[english]{babel}
\usepackage{mathrsfs}
\usepackage{subfigure}
\usepackage{amsthm}
\usepackage[authoryear]{natbib}
\usepackage{lipsum}
\usepackage{multirow}
\usepackage{graphicx}
\newtheorem{theorem}{Theorem}[section]
\newtheorem{cor}{Corollary}[section]
\newtheorem{lemma}[theorem]{Lemma}
\newtheorem{assumption}{Assumption}
\newtheorem{remark}{Remark}

\DeclareMathOperator*{\argmax}{arg\,max}
\title{Consistent Bayesian Sparsity Selection for High-dimensional Gaussian DAG Models with Multiplicative and 
Beta-mixture Priors}
\author{Xuan Cao \\University of Cincinnati
	\and 
	Kshitij Khare \\ University of Florida
\and Malay Ghosh \\ University of Florida}
\begin{document}
	\allowdisplaybreaks
	\doublespacing
	\noindent
	\maketitle
			\begin{abstract}
			Estimation of the covariance matrix for high-dimensional multivariate datasets is a challenging and important 
			problem in modern statistics. In this paper, we focus on high-dimensional Gaussian DAG models where 
			sparsity is induced on the Cholesky factor $L$ of the inverse covariance matrix. In recent work,  
			(\citep*{CKG:2017}), we established high-dimensional sparsity selection consistency for a hierarchical 
			Bayesian DAG model, where an Erdos-Renyi prior is placed on the sparsity pattern in the Cholesky factor  
			$L$, and a DAG-Wishart prior is placed on the resulting non-zero Cholesky entries. In this 
			paper we significantly improve and extend this work, by (a) considering more diverse and effective priors on 
			the sparsity pattern in $L$, namely the beta-mixture prior and the multiplicative prior, and (b) establishing 
			sparsity selection consistency under significantly relaxed conditions on $p$, and the sparsity pattern of the 
			true model. We demonstrate the validity of our theoretical results via numerical 
			simulations, and also use further simulations to demonstrate that our sparsity selection approach is 
			competitive with existing state-of-the-art methods including both frequentist and Bayesian approaches in 
			various settings. 	
	\end{abstract}
	\section{Introduction}\label{sec1}
	
\noindent	
Covariance estimation and selection is a fundamental problem in multivariate statistical inference, and plays a crucial role in 
many data analytic methods. In high-dimensional settings, where the number of variables is much larger than 
the number of samples, the sample covariance matrix (traditional estimator for the population covariance matrix) can perform 
rather poorly. See \citep*{Bickel:Levina:2008,bickel:2008:thres,elkaroui:2007} for example. To address the challenge posed 
by high-dimensionality, several promising methods have been proposed in the literature. In particular, methods inducing 
sparsity in the covariance matrix $\Sigma$, its inverse $\Omega$, or the Cholesky factor of the inverse, have proven to be 
very effective in applications. In this paper, we focus on imposing sparsity on the Cholesky factor of the inverse covariance 
(precision) matrix. These models are also referred to as Gaussian DAG models. 
	
Consider a case when we have i.i.d. observations ${\bf Y}_1, {\bf Y}_2, \cdots, {\bf Y}_n$ obeying a $p$-variate normal 
distribution with mean vector ${\bf 0}$ and covariance matrix $\Sigma$. Let $\Omega = LD^{-1}L^T$ be the unique modified 
Cholesky decomposition of the inverse covariance matrix $\Omega = \Sigma^{-1}$, where $L$ is a lower triangular matrix 
with unit diagonal entries, and $D$ is a diagonal matrix with positive diagonal entries. A given sparsity pattern on $L$ 
corresponds to certain conditional independence relationships, which can be encoded in terms of a directed acyclic graph 
$\mathscr D$ on the set of $p$ variables as follows: if the variables $i$ and $j$ do not share an edge in $\mathscr D$, then 
$L_{ij} = 0$ (see Section \ref{sec2} for more details). 

There are two major approaches in the literature for sparse estimation of $L$. The first approach is based on regularized 
likelihood/pseudolikelihood using $\ell_1$ penalization. See \citep*{HLPL:2006, Rutimann:Buhlmann:2009, 
Shojaie:Michailidis:2010, RLZ:2010, AAZ:2015, Yu:Bien:2016, KORR:2017}. Some of these frequentist approaches assume 
$L$ is banded, i.e., the elements of $L$ that are far from the diagonal are taken to be zero. The other methods put 
restrictions on the maximum number of non-zero entries in $L$. 

On the Bayesian side, when the underlying graph is known, literature exists that explores the posterior convergence rates for 
Gaussian concentration graph models, which induce sparsity in the inverse covariance matrix $\Omega$. See 
\citep*{Banerjee:Ghosal:2014, Banerjee:Ghosal:2015, XKG:2015, LL:posterior} for example. Gaussian concentration graph 
models and Gaussian DAG models studied in this paper intersect only at perfect DAG models, which are equivalent to 
decomposable concentration graphical models. For general Gaussian DAG models, comparatively fewer works have tackled 
with asymptotic consistency properties. Recently, \citet*{CKG:2017} establish both strong model selection consistency and 
posterior convergence rates for sparse Gaussian DAG models  in a high-dimensional regime. In particular, the authors 
consider a hierarchical Gaussian DAG model with DAG-Wishart priors introduced in \citep*{BLMR:2016} on the Cholesky 
parameter space and independent Bernoulli$(q)$ priors for each edge in the DAG (the so-called Erdos-Renyi prior). 
However, the sparsity assumptions on the true model required to establish consistency are rather restrictive. In addition, as a 
result of the extremely small value of the edge probability $q$ in the Bernoulli prior, the simulations studies always tend to 
favor more sparse models under smaller values of $p$. \citet*{LLL:2018} also explore the Cholesky factor selection 
consistency under the empirical sparse Cholesky (ESC) prior and $\alpha$-posteriors. Compared with \citep*{CKG:2017}, 
under relaxed conditions in terms of the dimensionality, sparsity and lower bound of the non-zero elements in the Cholesky 
factor, \citet*{LLL:2018} establish strong model selection consistency with the $\alpha$-posterior distribution. 

It recently came to our attention that two more flexible alternative priors compared to the Erdos-Renyi prior have been considered in the 
undirected graphical models literature: (a) the multiplicative prior \citep*{Tan:2017}, and (b) the beta-mixture prior 
\citep*{Carvalho:Scott:2009}. Both priors are more diverse than the Erdos-Renyi prior (the Erdos-Renyi prior can be obtained 
as a degenerate version of these priors), and have various attractive properties. For example, the multiplicative model prior 
can account for greater variability in the degree distribution as compared to the Erods-Renyi model, while the beta-mixture 
prior allows for stronger control over the number of spurious edges and corrects for multiple hypothesis testing automatically. 
We provide the algebraic forms of these priors in Section 3 and Section 5 respectively, and refer the reader to 
\citep*{Carvalho:Scott:2009, Tan:2017} for a detailed discussion of their properties. 

To the best of our knowledge, a rigorous investigation of high-dimensional posterior consistency properties with the 
multiplicative prior or the beta-mixture prior has not been undertaken for either undirected graphical models or Gaussian 
DAG models. Hence, our goal was to investigate if high-dimensional consistency results could be established under these 
two more diverse and algebraically complex class of prior distributions in the Gaussian DAG model setting. Another goal was 
to investigate if these high-dimensional posterior consistency results can be obtained under much weaker conditions as 
compared to \citep*{CKG:2017}, particularly  conditions similar to those in \citep*{LLL:2018}. This was a challenging goal, 
particularly for the multiplicative model prior, as the prior mass function is not available in closed form (note that the mass 
functions for the Erdos-Renyi, ESC and beta-mixture priors are available in closed form). 

As the main contributions of this paper, we establish high-dimensional posterior consistency results for Gaussian DAG 
models with spike and slab priors on the Cholesky factor $L$, under both the multiplicative prior as well as the beta-mixture 
prior on the sparsity pattern in $L$ (Theorems \ref{thm4} to \ref{thm3}), using assumptions similar to those in \citep*{LLL:2018} (where 
a different setting of ESC priors and $\alpha$-posteriors is used). Also, through simulation studies, we demonstrate that the  
models studied in this paper can outperform existing state-of-the-art methods including both penalized likelihood and 
Bayesian approaches in different settings.
	
The rest of paper is organized as follows. Section \ref{sec2} provides background material regarding Gaussian DAG 
model and introduce the spike and slab prior on the Choleksy factor. In Section \ref{sec:modelspecification}, we revisit the multiplicative prior, and  present 
our hierarchical Bayesian model and the parameter class for the inverse covariance matrices. Model selection 
consistency results for both the multiplicative prior and the beta-mixture prior are stated in Section \ref{sec:modelconsistency} and Section \ref{sec:beta_mixture} with proofs provided in Section 
\ref{sec:modelselectionproofs}. In Section \ref{sec:experiments} we use simulation experiments to illustrate the 
posterior ratio consistency result, and demonstrate the benefits of our Bayesian approach and computation procedures 
for Choleksy factor selection vis-a-vis existing Bayesian and penalized likelihood approaches. We end our paper with a 
discussion session in Section \ref{sec:discussion}.
	
	\section{Preliminaries}\label{sec2}
	
	\noindent
	In this section, we provide the necessary background material from graph theory, 
	Gaussian DAG models, and also introduce our spike and slab prior on the Cholesky parameter. 
	
	\subsection{Gaussian DAG Models} \label{sec2.1}
	
	\noindent
	We consider the multivariate Gaussian distribution 
	\begin{equation} \label{mgd}
	\bm Y \sim N_p(0, \Omega^{-1}),
	\end{equation} 
	where $\Omega$ is a $p \times p$ inverse covariance matrix. Any positive definite matrix $\Omega$ can be uniquely 
	decomposed as $\Omega = 
	LD^{-1}L^T$, where $L$ is a lower triangular matrix with unit diagonal entries, and $D$ 
	is a diagonal matrix with positive diagonal entries. This decomposition is known as the 
	modified Cholesky decomposition of $\Omega$ (see for example 
	\cite{Pourahmadi:2007}). In particular, the model (\ref{mgd}) can be interpreted as a Gaussian DAG model depending 
	on the sparsity pattern of $L$. 
	
	A directed acyclic graph (DAG) $\mathscr{D} = (V,E)$ consists 
	of the vertex set $V = \{1,\ldots,p\}$ and an edge set $E$ such that there is no directed 
	path starting and ending at the same vertex. As in \citep*{BLMR:2016, CKG:2017}, we will without 
	loss of generality assume a parent ordering, where that all the edges are directed from 
	larger vertices to smaller vertices. For several applications in genetics, finance, and 
	climate sciences, a location or time based ordering of variables is naturally available. For example, in genetic datasets, 
	the variables can be genes or SNPs located contiguously on a chromosome, and their spatial location 
	provides a natural ordering.  More examples can be found in 
	\citep*{HLPL:2006, Shojaie:Michailidis:2010, Yu:Bien:2016, KORR:2017}. The set of parents of $i$, denoted 
	by $pa_i(\mathscr D)$, is the 
	collection of all vertices which are larger than $i$ and share an edge with $i$. 
	Similarly, the set of children of $i$, denoted by $chi_i(\mathscr D)$, is the collection of all vertices 
	which are smaller than $i$ and share an edge with $i$. 
	
	A Gaussian DAG model over a given DAG $\mathscr{D}$, denoted by 
	$\mathscr{N}_{\mathscr{D}}$, consists of all multivariate Gaussian distributions which 
	obey the directed Markov property with respect to a DAG $\mathscr{D}$. In particular, 
	if $\bm{Y}=(Y_1, \ldots, Y_p)^T \sim N_p(0,\Sigma)$ and $N_p(0,\Sigma = \Omega^{-1}) \in 
	\mathscr{N}_{\mathscr{D}}$, then $$Y_i \perp \bm{Y}_{\{i+1,\ldots,p\}\backslash pa_i(\mathscr D)}|
	\bm{Y}_{pa_i(\mathscr D)},$$for each $1\le i \le p$. Furthermore, it is well-known that if $\Omega = LD^{-1}L^T$ is the 
	modified Cholesky decomposition of $\Omega$, then $N_p(0,\Omega^{-1}) \in 
	\mathscr{N}_{\mathscr{D}}$ if and only if $L_{ij} = 0$ whenever $i \notin pa_j (\mathscr D)$. In other 
	words, the structure of the DAG $\mathscr{D}$ is uniquely reflected in the sparsity pattern of the Cholesky factor $L$. 
	In light of this, it is often more convenient to 
	reparametrize the inverse covariance matrix in terms of the Cholesky parameter $(L,D)$.

	\subsection{Notations} \label{sec2.2}
	
	\noindent
	Consider the modified cholesky decomposition $\Omega = LD^{-1}L^T$, where $L$ is a lower triangular matrix with all 
	the unit diagonals and $D = \mbox{Diag }\{d_1, d_2, \ldots, d_p\}$, where $d_i$'s are all positive. We suggest to impose 
	spike and slab priors on the lower diagonal of $L$ to recover the sparse structure of the Cholesky factor. To facilitate this 
	purpose, we introduce latent binary variables $Z = \left\{Z_{21}, \ldots, Z_{kj}, \ldots, Z_{p,p-1}\right\}$ for $1 \le j < k \le p$ to indicate whether $L_{kj}$ is active, i.e., $Z_{kj} = 1$  if 
	$L_{kj} \neq 0$ and 0, otherwise. We can view the binary variable $Z_{kj}$ as the indicator for the sparsity pattern of $L$. In other words, for each $1 \le j \le p-1$, let $Z_j$, a subset of $\left\{j+1, j+2, 
	\ldots, p\right\}$, be the index set of all non-zero components in $\left\{Z_{j+1,j}, \ldots, Z_{p,j}\right\}$. $Z_j$ explicitly
	 gives the support of the Cholesky factor and the sparsity pattern of the underlying DAG. Denote $|Z_j| = \sum_{k=j+1}^p 
	 Z_{kj}$ as the cardinality of set $Z_j$ for $ 1 \le j \le p-1$.
	 
	Following the definition of $Z$, for any $p \times p$ matrix $A$, 
	denote the column vectors $A_{Z.j}^> = (A_{kj})_{k \in Z_j}$ and 
	$A_{Z.i}^{\ge} = (A_{ii}, (A_{Z.i}^>)^T)^T.$ Also, let $A_{Z}^{>j} = (A_{ki})_{k,i \in Z_j}$, $$ A_{Z}^{ \ge i} = 
	\left[ \begin{matrix}
	A_{ii} & (A_{Z.i}^>)^T \\
	A_{Z.i}^> & A_{Z}^{>i}
	\end{matrix} \right]. 
	$$
	In particular, $A_{Z.p}^{\ge} = A_{Z}^{ \ge q} = A_{pp}$. 
	
	Next, we provide some additional required notation. For $x \in \mathbb{R}^p$, let $\lVert x \rVert_r = \left(\sum_{j=1}^{p} 
	|x_j|^r\right)^{\frac1 r}$ and $\lVert x \rVert_\infty = \max_j|x_j|$ represent the standard $l_r$ and $l_\infty$ norms. For a 
	$p \times p$ matrix $A$, let $eig_1(A) \le eig_2(A) \ldots eig_p(A)$ be the ordered eigenvalues of $A$ and denote
	\begin{align*}
	\lVert A \rVert_{\max} = \max_{1\le i,j \le p} |A_{ij}|,
	\end{align*}
	\begin{align*}
	\lVert A \rVert_{(r,s)} = \mbox{sup} \left\{\lVert Ax \rVert_s:\lVert x \rVert = 1 \right\}, \mbox{ for } 1 \le r,s < \infty.
	\end{align*}
	In particular, 
	$$\lVert A \rVert_{(1,1)} = \max_j\sum_i|A_{ij}|,\mbox{ } \lVert A \rVert_{(\infty, \infty)} = \max_i\sum_j|A_{ij}| \mbox{ and }
	\lVert A \rVert_{(2,2)} =  eig_p(A)^{\frac 1 2}.$$
	\subsection{Spike and Slab Prior on Cholesky Parameter}
	In this section, we specify our spike and slab prior on the Cholesky factor as follows.
	\begin{eqnarray}
	& & L_{kj} \mid d_j, Z_{kj} \overset{ind}{\sim} Z_{kj} N\left(0, \tau^2 d_j\right) + (1-Z_{kj}) \delta_0(L_{kj}),\quad 1 \le j 
	< k \le p, \label{model_spike_slab}\\
	& & d_j  \overset{ind}{\sim} \mbox{Inverse-Gamma}(\lambda_1, \lambda_2), \quad j = 1,2,\ldots,p \label{model_d},
	\end{eqnarray}

	\noindent
	for some constants $\tau, \lambda_1, \lambda_2 \ge 0$, where $\delta_0(L_{kj})$ denotes a point mass at $0$. We refer 
	to (\ref{model_spike_slab}) and (\ref{model_d}) as our spike and slab Cholesky (SSC) prior.  
	$Z_{jk} = 1$ implies $L_{jk}$ being the “signal” (i.e., from the slab component), and $Z_{jk} = 0$ implies $L_{jk}$ being 
	the noise (i.e., from the spike component). Note that to obtain 
	our desired asymptotic consistency results, appropriate conditions for these hyperparameters will be introduced in 
	Section \ref{sec:assumption}. \cite{Xu:Ghosh:2015} also impose this type of priors on the regression factors. Further 
	comparisons and discussion are provided in Remark \ref{spikeslabregression}.

	\begin{remark} \label{spikeslab:dagwishart}
	Note that in (\ref{model_d}), we are allowing the hyperparameters for the inverse-gamma prior to be zero. In 
	\citep*{CKG:2017}, the DAG-Wishart prior with multiple shape parameters introduced in \citep*{BLMR:2016} is placed on 
	the Cholesky parameter. As indicated in Theorem 7.3 in \citep*{BLMR:2016}, the DAG-Wishart distribution defined on the 
	Cholesky parameter space given a DAG yields the independent inverse-gamma distribution with strictly positive shape 
	and scale parameters on $d_j$  and multivariate Gaussian distribution on the non-zero elements in each column of $L$ 
	given $d_j$. Hence, for given DAG structures, there are some difference and connection between the DAG-Wishart prior 
	and our spike and slab prior. 
	\end{remark}

\section{Model Specification} \label{sec:modelspecification}
In this section, we revisit the multiplicative prior introduced in \citep*{Tan:2017} over space of graphs, and specify our hierarchical model. 
\subsection{Multiplicative Prior} \label{sec:multiplicative}
In the context of Gaussian graphical model, \citet*{Tan:2017} allow the probability of a link between nodes $k$ and $j$, $q_{kj}$ to vary with $i,j$ by taking $q_{kj} = \omega_k\omega_j$ and $0 < \omega_j < 1$ for each $1 \le j \le p$. The authors further treat each $\omega_i$ as a variable with a beta prior to adopt a fully Bayesian approach. The authors further utilize Laplace approximations, and through simulation studies, show that the proposed multiplicative model (following the nomenclature in \citep*{Tan:2017}) facilitates the purpose to encourage sparsity or graphs that exhibit particular degree patterns based on prior knowledge. Adapted to our framework, we consider the following multiplicative prior over the space of sparsity variation for the Cholesky factor. 
\begin{eqnarray} 
& &  \pi(Z \mid \omega_1, \ldots, \omega_{p}) = \prod_{1 \le j < k \le p}(\omega_k\omega_j)^{Z_{kj}}(1-\omega_k\omega_j)^{1-Z_{kj}}, \label{model7} \\ 
& & \omega_j \sim \mbox{Beta}(\alpha_1,\alpha_2), \qquad 1 \le j \le p,\label{model8}
\end{eqnarray}
where $\alpha_1, \alpha_{2}$ are positive constants. Compared with the universal indicator probability $q$ in an Erdos-Renyi prior, here we allow the variation attainable in the degree structure of each node through different values of $\omega_j$. Note that under the multiplicative prior, the marginal posterior for $Z$ can not be obtained in closed form, which leads to further challenges not only in the theoretical analysis, but also in the computational strategy. We will elaborate on this matter in Section \ref{sec:experiments}. 
\subsection{Hierarchical Model Formulation } \label{sec:model_formula}
Let $\bm{Y}_1, \bm{Y}_2, \ldots, \bm{Y}_n $ be 
independent and identically distributed $p$-variate Gaussian vectors with mean 
$0$ and true covariance matrix $\Sigma_0  = (\Omega_0 )^{-1}$, where $\Omega_0  = 
L_0 (D_0 )^{-1}(L_0 )^T$ is the modified Cholesky decomposition of 
$\Omega_0 $. Let $S = \frac1n\sum_{i=1} \bm{Y}_i \bm{Y}_i^T$ 
denotes the sample covariance matrix. The sparsity pattern of the true Choleksy factor $L_0 $ is uniquely encoded in the true binary variable denoted as $Z_0 $. Similar to \citep*{CKG:2017}, we also denote $d$ as the maximum number of 
non-zero entries in any column of $L_0 $, and $s = \min_{1 \leq j,i \leq p, i \in Z_j} |(L_0)_{ji}|$. For sequences  $a_n$ and $b_n$, $a_n \sim b_n$ means $\frac{a_n}{b_n} \rightarrow c$ for some constant $c > 0$, as $n \rightarrow \infty$. Let $a_n =o(b_n)$ represent $\frac{a_n}{b_n} \rightarrow 0$ as $n \rightarrow \infty$. 

The class of spike and slab Cholesky distributions in Section \ref{sec2} and the multiplicative priors in Section \ref{sec:multiplicative} can be used for Bayesian model selection of the Cholesky factor through the following hierarchical model,
\begin{eqnarray} \label{model:spec}
& & \bm{Y} \mid (D, L), Z \sim N_p \left( \bm 0, (LD^{-1}L^T)^{-1}\right)), \label{model1}\\
& & L_{kj} \mid d_j, Z_{kj} \overset{ind}\sim Z_{kj} N\left(\bm 0, \tau^2 d_j\right) + (1-Z_{kj}) \delta_0(L_{kj}), \quad 1 \le j < k \le p, \label{model2}\\
& & d_j  \overset{ind}{\sim} \mbox{Inverse-Gamma}(\lambda_1, \lambda_2), \quad j = 1,2,\ldots,p, \label{model4} \\ 
& &  \pi(Z \mid \omega_1, \ldots, \omega_{p}) = \prod_{1 \le j < k \le p}(\omega_k\omega_j)^{Z_{kj}}(1-\omega_k\omega_j)^{1-Z_{kj}}, \label{model5} \\ 
& & \omega_j \sim \mbox{Beta}(\alpha_1,\alpha_2), \qquad 1 \le j \le p,\label{model6}
\end{eqnarray}
where $\mbox{Beta}(\alpha_1,\alpha_2)$ represents the beta distribution with shape parameters $\alpha_1, \alpha_2$. The proposed hierarchical model now has five hyperparameters: the scale parameter $\tau>0$ in model (\ref{model2}) controlling the variance of the spike part in the spike and slab prior on each $L_{kj}$, the shape parameter $\lambda_1$ and scale parameter $\lambda_2$ in model (\ref{model4}), and the two positive shape parameters in the beta distribution in model (\ref{model6}). Further restrictions on these hyperparameters to ensure desired consistency will be specified in Section \ref{sec:assumption_hyper}.

The intuition behind this set-up with latent variables is that the elements in the Cholesky factor $L$ with zero or very small values will be identified with zero $Z$ values, while the active entries will be classified as $Z = 1$. We use the posterior probabilities of all the $\frac{p(p-1)}2$ latent variables $Z$ to identify the active elements in $L$. In particular, the following lemmas help specify the upper bound for the marginal probability ratio and the marginal posterior ratio for any ``non-true" model $Z$ compared with the true model $Z$ under the multiplicative prior. The proof will be provided in Section \ref{sec:proof_thm4}.
\begin{lemma}\label{graph_ratio_lemma}
	If the hyperparameter $\alpha_{2}$ in model (\ref{model6}) satisfies $\alpha_{2} \sim \max\left\{p^c, d^{\frac{2c}{c-2}}\right\}$, for $c > 2$, we have
\begin{align} 
\frac{\pi(Z)}{\pi(Z_0)} \le e^{2\alpha_1^2  + 2\alpha_1 + \frac 2 {\alpha_{1}}} \prod_{j=1}^{p}\frac{B(\alpha_1+ |Z_j|, \alpha_2)}{B(\alpha_1+ |{Z_0}_j|, \alpha_2)},
\end{align}
for $p \ge 4+\frac{4}{\alpha_1} + 2\sqrt{\alpha_1}$.	
\end{lemma}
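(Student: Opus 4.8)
The plan is to integrate out the link parameters $\omega_1,\dots,\omega_p$ and then bound $\pi(Z)$ from above and $\pi(Z_0)$ from below in a common form built out of the Beta normalizing constants $B(\cdot,\cdot)$. Using the independence of the $\omega_j$ in (\ref{model6}) and collecting the exponent of each $\omega_j$ across all incident pairs, I would first record the identity
\[
\pi(Z)=\int_{[0,1]^p}\Big(\prod_{j=1}^{p}\omega_j^{\,|Z_j|+g_j}\Big)\Big(\prod_{1\le j<k\le p}(1-\omega_k\omega_j)^{1-Z_{kj}}\Big)\prod_{j=1}^{p}f(\omega_j)\,d\omega,
\]
where $f$ is the $\mathrm{Beta}(\alpha_1,\alpha_2)$ density and $g_j=\sum_{i<j}Z_{ji}$ is the number of edges joining $j$ to a lower-indexed vertex, so that $|Z_j|+g_j$ is the full degree of $j$. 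The essential difficulty, and the reason $\pi(Z)$ has no closed form, is the coupled product $\prod(1-\omega_k\omega_j)^{1-Z_{kj}}$.

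For the numerator I would simply discard these noise factors (each at most $1$); the integral then factorizes over $j$ and yields the clean upper bound
\[
\pi(Z)\le\prod_{j=1}^{p}\frac{B(\alpha_1+|Z_j|+g_j,\alpha_2)}{B(\alpha_1,\alpha_2)}.
\]
For the denominator the noise product must be retained, and this is exactly where the hypothesis $\alpha_2\sim\max\{p^c,d^{2c/(c-2)}\}$ is used. Because $\alpha_2$ grows at least like $p^c$ with $c>2$, the prior forces every $\omega_j$ to concentrate near $0$, with $E[\omega_j]=\alpha_1/(\alpha_1+\alpha_2)$; applying $\prod(1-\omega_k\omega_j)\ge 1-\sum_{j<k}\omega_k\omega_j$ inside the integral and controlling $\sum_{j<k}E[\omega_k\omega_j]\le \tfrac12 p^2 E[\omega_1]^2=O(\alpha_1^2 p^{2-2c})$ shows the noise product stays above a constant $K$ arbitrarily close to $1$ once $p\ge 4+4/\alpha_1+2\sqrt{\alpha_1}$ (the $d^{2c/(c-2)}$ term is what guarantees $\alpha_2\gtrsim pd$, hence dominates the accumulated cardinalities). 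This produces the matching lower bound $\pi(Z_0)\ge K\prod_j B(\alpha_1+|Z_{0j}|+g_{0j},\alpha_2)/B(\alpha_1,\alpha_2)$, where $g_{0j}$ is the analogous quantity for the true graph.

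Dividing the two bounds gives
\[
\frac{\pi(Z)}{\pi(Z_0)}\le \frac{1}{K}\prod_{j=1}^{p}\frac{B(\alpha_1+|Z_j|+g_j,\alpha_2)}{B(\alpha_1+|Z_{0j}|+g_{0j},\alpha_2)},
\]
so the remaining task is to pass from this degree-based expression to the parent-count product stated in the lemma. I would do this by writing each factor $B(\alpha_1+|Z_j|+g_j,\alpha_2)/B(\alpha_1+|Z_j|,\alpha_2)$ as the telescoping product $\prod_{i=0}^{g_j-1}(\alpha_1+|Z_j|+i)/(\alpha_1+|Z_j|+\alpha_2+i)$, taking logarithms, and using $\log(1+x)\le x$ together with $|Z_{0j}|\le d$ and the growth of $\alpha_2$ relative to $d$ to collapse the accumulated corrections, together with those coming from $1/K$ and from the $B(\alpha_1,\alpha_2)$ prefactors, into the single constant $e^{2\alpha_1^2+2\alpha_1+2/\alpha_1}$ (a matter of $\Gamma$-ratio/Stirling bookkeeping).

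The step I expect to be the main obstacle is precisely this last reconciliation: the multiplicative prior is symmetric in the two endpoints of each edge (both $\omega_k$ and $\omega_j$ appear), so the marginal is genuinely governed by vertex degrees, whereas the right-hand side is one-sided, involving only the parent counts $|Z_j|$. Controlling the leftover child-endpoint factors $\prod_j B(\alpha_1+|Z_{0j}|,\alpha_2)/B(\alpha_1+|Z_{0j}|+g_{0j},\alpha_2)$ by the stated constant is delicate and is where the full strength of the conditions on $\alpha_2$ and $p$ must be spent; I would expect to need the competing models $Z$ to be at least as rich as $Z_0$ on each column, since it is in that regime that the degree and parent-count products differ only by terms the constant can absorb.
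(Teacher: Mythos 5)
Your overall strategy---integrate out the $\omega_j$'s, upper-bound $\pi(Z)$ by discarding the noise factors $(1-\omega_k\omega_j)^{1-Z_{kj}}$, lower-bound $\pi(Z_0)$ by showing those factors cost at most a constant, then divide---is exactly the paper's (compare its displays (\ref{marginal_Z_upper}) and (\ref{lower_bound_Z_0})); the paper implements the lower bound by restricting the integral to $\left\{\omega_j<\alpha_1/\max\{p^{c/2},d^{c/(c-2)}\}\right\}$ and applying Markov's inequality, where you use the Weierstrass inequality. Your version has two problems, one fixable and one not. The fixable one: after writing $\prod(1-\omega_k\omega_j)\ge 1-\sum_{j<k}\omega_k\omega_j$ inside the integral, the quantity that must be small is not the prior expectation $\sum_{j<k}E[\omega_k\omega_j]$ that you display, but that expectation reweighted by the signal factor $f(\omega)=\prod_m \omega_m^{|{Z_0}_m|+g_{0m}}$; $f$ is increasing in each coordinate while the noise factors are decreasing, so by Harris/FKG $E[fg]\le E[f]E[g]$, and the factorized bound runs in the wrong direction. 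The repair is mechanical: $E[f\,\omega_k\omega_j]/E[f]=\frac{(\alpha_1+\deg_0(k))(\alpha_1+\deg_0(j))}{(\alpha_1+\alpha_2+\deg_0(k))(\alpha_1+\alpha_2+\deg_0(j))}$ by independence, and summing over pairs gives at most $(p\alpha_1+2E_0)^2/(2\alpha_2^2)$ with $E_0\le pd$ the number of true edges, which is $o(1)$ because $\max\{p^{c/2},d^{c/(c-2)}\}\ge pd$; this is exactly what your parenthetical remark about $\alpha_2\gtrsim pd$ is gesturing at.

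The unfixable one is the obstacle you flag at the end, and your instinct that it cannot be closed without tying $Z$ to $Z_0$ is correct. The honest computation leaves $\prod_j B(\alpha_1+|Z_j|+g_j,\alpha_2)/B(\alpha_1+|{Z_0}_j|+g_{0j},\alpha_2)$; dropping $g_j$ in the numerator is free (the Beta function is decreasing in its first argument), but trading $B(\alpha_1+|{Z_0}_j|+g_{0j},\alpha_2)$ for $B(\alpha_1+|{Z_0}_j|,\alpha_2)$ in the denominator costs $\prod_{i=0}^{g_{0j}-1}\frac{\alpha_1+\alpha_2+|{Z_0}_j|+i}{\alpha_1+|{Z_0}_j|+i}\approx\alpha_2^{\,g_{0j}}$ per vertex, hence $\alpha_2^{\,E_0}$ in total, which no constant of the form $e^{2\alpha_1^2+2\alpha_1+2/\alpha_1}$ can absorb when $\alpha_2\sim p^c$. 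Indeed the stated inequality fails outright: take $Z_0$ to be the single edge $Z_{21}=1$ (so $d=1$) and $Z=\emptyset$, with $\alpha_1$ fixed; then $\pi(Z_0)\le E[\omega_1\omega_2]=\alpha_1^2/(\alpha_1+\alpha_2)^2$ while $\pi(\emptyset)\ge 1-\binom{p}{2}\alpha_1^2/(\alpha_1+\alpha_2)^2\ge\frac12$ for large $n$, so $\pi(Z)/\pi(Z_0)\ge(\alpha_1+\alpha_2)^2/(2\alpha_1^2)$, whereas the claimed bound equals $e^{2\alpha_1^2+2\alpha_1+2/\alpha_1}(\alpha_1+\alpha_2)/\alpha_1$, smaller by a factor of order $\alpha_2\sim p^c$. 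Importantly, you have not missed an idea that the paper possesses: the paper's proof clears this hurdle only through an invalid step---in (\ref{lower_bound_Z_0}) the exponent of $\omega_j$ is written as $|{Z_0}_j|$, whereas under the prior (\ref{model7}) it is the full degree $|{Z_0}_j|+g_{0j}$ (parents plus children), and since $\omega_j<1$ replacing the degree by the parent count increases the integrand, which is not permissible inside a lower bound for $\pi(Z_0)$. What your route, repaired as above, actually proves is the degree-based inequality $\pi(Z)/\pi(Z_0)\le(1+o(1))\prod_j B(\alpha_1+|Z_j|,\alpha_2)/B(\alpha_1+|{Z_0}_j|+g_{0j},\alpha_2)$; the one-sided statement of Lemma \ref{graph_ratio_lemma} does not follow from it, nor, as the counterexample shows, from anything else.
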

\noindent
Lemma \ref{graph_ratio_lemma} further enables the marginalized posterior likelihood ratio to be upper bounded by decomposed prior terms absorbed into the product of items as follows. 
\begin{lemma} \label{newlemma1}
		If $\alpha_{2} \sim \max\left\{p^c, d^{\frac{2c}{c-2}}\right\}$, for $c > 2$, the marginal posterior ratio between any ``non-true" model $Z$ and the true model $Z_0$ under the multiplicative prior in (\ref{model7}) and (\ref{model8}) satisfies
	\begin{align} \label{posterior_multiplicative}
	&\frac{\pi({Z}|\bm{Y})}{\pi({Z}_0|\bm{Y})} \nonumber\\
	\le & M_1\prod_{j=1}^{p-1} (n\tau^2)^{-\frac{|Z_j| - |{Z_0}_j|}2} \frac{B(\alpha_1+ |Z_j|, \alpha_2)}{B(\alpha_1+ |{Z_0}_j|, \alpha_2)}\nonumber \\
	&\times \frac{|\tilde{S}_{Z_0}^{\ge j}|^{\frac12}}{|\tilde{S}_{Z}^{\ge j}|^{\frac12}}\left(\frac{\tilde{S}_{j|{Z_0}_j}}{\tilde{S}_{j|Z_j}}\right)^{\frac 1 2} \left(\frac{\tilde S_{j|{Z_0}_j}- \frac 1 {n\tau_{n,p}^2}+ \frac{2\lambda_2}{n}}{\tilde S_{j|{Z}_j} - \frac 1 {n\tau_{n,p}^2} + \frac{2\lambda_2}{n}}\right)^{\frac n 2 + \lambda_1} \nonumber\\
	\triangleq& M_1\times \prod_{j = 1}^{p-1}PR^\prime_j(Z,Z_0),
	\end{align} where $M_1 = e^{2\alpha_1^2  + 2\alpha_1 + \frac 2 {\alpha_{1}}}$, $\tilde{S} = S+\frac 1 {n\tau_{n,p}^2} I_p$ and $\tilde{S}_{j|Z_j} = \tilde{S}_{jj} - (\tilde{S}_{Z \cdot j}^>)^T(\tilde{S}_Z^{>j})^{-1} \tilde{S}_{Z \cdot j}^>$.
\end{lemma}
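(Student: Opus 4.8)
\section*{Proof proposal}

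The plan is to split the posterior ratio through Bayes' rule as
$$\frac{\pi(Z\mid\bm Y)}{\pi(Z_0\mid\bm Y)} = \frac{\pi(Z)}{\pi(Z_0)}\cdot\frac{m(\bm Y\mid Z)}{m(\bm Y\mid Z_0)},$$
where $m(\bm Y\mid Z)=\int \pi(\bm Y\mid L,D)\,\pi(L,D\mid Z)\,d(L,D)$ is the marginal likelihood obtained after integrating out the Cholesky parameters $(L,D)$ for a fixed sparsity pattern $Z$. The first factor is already controlled by Lemma \ref{graph_ratio_lemma}, which supplies the constant $M_1=e^{2\alpha_1^2+2\alpha_1+2/\alpha_1}$ together with the product of beta-function ratios. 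Hence the entire task reduces to evaluating $m(\bm Y\mid Z)$ in closed form and forming the ratio; the inequality in the statement is inherited solely from the prior bound, while the likelihood contribution is an exact identity valid for every $Z$.

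First I would use the DAG/Cholesky structure to decouple the marginal likelihood column by column. Writing $|\Omega|^{n/2}=\prod_{j=1}^p d_j^{-n/2}$ and $\mathrm{tr}(\Omega S)=\sum_{j=1}^p d_j^{-1}(L^TSL)_{jj}$, and noting that the $j$-th column of $L$ is supported on $\{j\}\cup Z_j$ with unit diagonal, the integrand factorizes across $j$. For each $j$ the relevant quadratic form is $(L^TSL)_{jj}=S_{jj}+2(L_{Z.j}^>)^T S_{Z.j}^>+(L_{Z.j}^>)^T S_Z^{>j} L_{Z.j}^>$, and the slab prior makes the active entries obey $L_{Z.j}^>\sim N(0,\tau^2 d_j I)$. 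Since the slab variance itself scales with $d_j$, both the likelihood and the prior contribute $d_j^{-1}$ to the exponent, so combining them replaces $S_Z^{>j}$ by $\tilde S_Z^{>j}=S_Z^{>j}+(n\tau^2)^{-1}I$ and makes $\tilde S=S+(n\tau^2)^{-1}I_p$ the natural object of the computation.

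The core of the argument is the two successive conjugate integrals. Integrating out $L_{Z.j}^>$ is a $|Z_j|$-dimensional Gaussian integral: completing the square produces a determinant factor $|\tilde S_Z^{>j}|^{-1/2}$, a power $(n\tau^2)^{-|Z_j|/2}$ (after the $d_j^{-|Z_j|/2}$ from the slab normalization cancels the $d_j^{|Z_j|/2}$ generated by the integral), and a reduced quadratic form equal to the Schur complement $\tilde S_{j|Z_j}-(n\tau^2)^{-1}$. The residual integral over $d_j$ against the Inverse-Gamma$(\lambda_1,\lambda_2)$ prior is a Gamma integral, yielding $\big[\tilde S_{j|Z_j}-(n\tau^2)^{-1}+2\lambda_2/n\big]^{-(n/2+\lambda_1)}$ up to constants $\Gamma(n/2+\lambda_1)$, $\lambda_2^{\lambda_1}$ and $(n/2)^{-(n/2+\lambda_1)}$ that do not depend on $Z$ and therefore cancel in the ratio $m_j(\bm Y\mid Z)/m_j(\bm Y\mid Z_0)$. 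Finally I would convert $|\tilde S_Z^{>j}|$ into $|\tilde S_Z^{\ge j}|$ through the block-determinant identity $|\tilde S_Z^{\ge j}|=|\tilde S_Z^{>j}|\,\tilde S_{j|Z_j}$, multiply over $j=1,\dots,p-1$ (the $j=p$ column is empty and contributes trivially), and fold in the prior bound from Lemma \ref{graph_ratio_lemma} to assemble $M_1\prod_j PR^\prime_j(Z,Z_0)$.

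The step I expect to be most delicate is the joint marginalization above: one must track the $d_j$-dependence carefully through the Gaussian integral, since the slab variance involves $d_j$, so that after eliminating $L_{Z.j}^>$ the remaining $d_j$-integral is still an exact inverse-gamma kernel rather than something intractable. Matching the powers of $d_j$ (the cancellation that leaves precisely $d_j^{-n/2}$) and correctly isolating the normalizing constant common to $Z$ and $Z_0$ are the places where the exponent bookkeeping must be done with care; everything else is routine determinant algebra.
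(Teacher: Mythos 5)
Your proposal follows essentially the same route as the paper's own proof: the paper likewise factors the posterior ratio into the prior ratio, bounded by Lemma \ref{graph_ratio_lemma}, times an exact column-by-column marginal likelihood ratio obtained by completing the square in $L_{Z.j}^>$ (yielding $|\tilde S_Z^{>j}|^{-1/2}$ and $(n\tau^2)^{-|Z_j|/2}$ with $\tilde S = S + (n\tau^2)^{-1}I_p$) and then performing the inverse-gamma integral over $d_j$ to get the $\left(\tfrac{n\tilde S_{j|Z_j}}{2}-\tfrac{1}{2\tau^2}+\lambda_2\right)^{-n/2-\lambda_1}$ factor. Your bookkeeping of the $d_j$ powers and the block-determinant conversion $|\tilde S_Z^{\ge j}| = |\tilde S_Z^{>j}|\,\tilde S_{j|Z_j}$ is correct (indeed, it states this identity the right way around, whereas the paper's text writes it with the two determinants transposed), so the proposal is sound.
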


\section{Model Selection Consistency} \label{sec:modelconsistency}
In this section we will explore the high-dimensional asymptotic properties of the 
Bayesian model selection approach for the Cholesky factor specified in Section \ref{sec:model_formula}. 
For this purpose, we will work in a setting where the dimension $p = p_n$ of the data 
vectors, and the hyperparameters vary with the sample size $n$ and $p_n \ge n$. Assume that the data is actually being generated from a true model specified as follows. Let $\bm{Y}_1^n, \bm{Y}_2^n, \ldots, \bm{Y}_n^n$ be 
independent and identically distributed $p_n$-variate Gaussian vectors with mean 
$0$ and true covariance matrix $\Sigma_0^n = (\Omega_0^n)^{-1}$, where $\Omega_0^n = 
L_0^n(D_0^n)^{-1}(L_0^n)^T$ is the modified Cholesky decomposition of 
$\Omega_0^n$. The sparsity pattern of the true Choleksy factor $L_0^n$ is reflected in $Z_0^n$. Recall the definition in Section \ref{sec:model_formula} that $d_n$ is the maximum number of 
non-zero entries in any column of $L_0^n$, and $s_n = \min_{1 \leq j,i \leq p, i \in Z_j} |(L_0^n)_{ji}|$. In order to establish our asymptotic consistency results, we need the following mild assumptions with respective discussion/interpretation. 
\subsection{Assumptions} \label{sec:assumption}
\subsubsection{Assumptions on the True Parameter Class}
\begin{assumption}
	There exists $\epsilon_{0} \le 1$, such that for every $n \ge 1,$ $0 < \epsilon_{0} \le eig_1({\Omega}_0^n) \le eig_{p_n}({\Omega}_0^n) \le \epsilon_{0}^{-1}$.
\end{assumption}
\noindent
This assumption ensures that the eigenvalues of the true precision matrices are bounded by fixed constants, which has been commonly used for establish high dimensional covariance asymptotic properties. See for example \citep*{Bickel:Levina:2008, ElKaroui:2008, Banerjee:Ghosal:2014, XKG:2015, Banerjee:Ghosal:2015}. Previous work \citep*{CKG:2017} relaxes this assumption by allowing the lower and upper bounds on the eigenvalues to depend on $p$ and $n$. 
\begin{assumption} \label{assumption:d_n}
	$d_n\sqrt{\frac{\log p_n}{n}}\rightarrow 0$  as $n \rightarrow \infty$.
\end{assumption}
\noindent
This is a much weaker assumption for high dimensional covariance asymptotic than for example, \citep*{XKG:2015, Banerjee:Ghosal:2014, Banerjee:Ghosal:2015, CKG:2017}. Here we essentially allow the number of variables $p_n$ to grow slower than $e^{n/d_n^2}$ compared to previous literatures with rate $e^{n/d_n^4}$. 
\begin{assumption} \label{assumption:s_n}
	$\frac{d_n\log p_n}{s_n^2n} \rightarrow 0$ as $n \rightarrow \infty$.
\end{assumption}
\noindent
Recall that $s_n$ is the smallest (in absolute value) non-zero off-diagonal entry in 
$L_0^n$. Hence, this assumption also known as the ``beta-min" condition also provides 
a lower bound for the ``slab" part of $L_0^n$ that is needed for establishing consistency. This type of condition has been used for the exact support recovery of the high-dimensional linear regression models as well as Gaussian DAG models. See for example \citep*{LLL:2018, yang:2016, KORR:2017, CKG:2017, Yu:Bien:2016}.
\begin{remark} \label{remark:LLL}
	It is worthwhile to point out that our assumptions on the true Cholesky factor are weaker compared to \citep*{ LLL:2018}. In particular, \citet*{LLL:2018} introduce conditions A(2) and A(4) on the sparsity pattern of the true Cholesky factor such that the number of non-zero elements in each row as well as each column of $L_{0}^n$ to be smaller than some constant $s_0$, while in this paper, we are allowing the maximum number of non-zero entries in any column of $L_0^n$ to grow at a smaller rate than $\sqrt{\frac{n}{\log p_n}}$ (Assumption \ref{assumption:d_n}).
\end{remark}
\subsubsection{Assumptions on the Prior Hyperparameters} \label{sec:assumption_hyper}
\begin{assumption} \label{assumption:Z}
	$\pi(Z) = 0$ for all $Z$ satisfying $\max_{1 \le j \le p-1}|Z_j| \ge R_{n}$, where $R_{n} \sim n\left(\log n\right)^{-1}$.
\end{assumption}
\noindent
This assumption essentially states that the prior on the space of the $2^{\binom{p_n}{2}}$ possible models, places zero mass on unrealistically large models (see similar assumptions in \citep*{Johnson:Rossell:2012,Shin.M:2015,Narisetty:He:2014} in the context of regression). Assumption \ref{assumption:Z} is also more relaxed compared with Condition (P) in \citep*{LLL:2018} where $R_n \sim n(\log p)^{-1}\{(\log n)^{-1}\vee c_3\}$ for some constant $c_3$. Note that this condition is for the hyperparameter of the prior distribution on the latent variables only, which does not affect the true parameter space.
\begin{assumption} \label{assumption:tau}
	The hyperparameter $\tau_{n,p_n}$ in (\ref{model5}) satisfies $\frac{d_n}{\tau_{n,p_n}^2\log p_n} \rightarrow 0$ and $\frac{\sqrt{\frac n {\tau_{n,p_n}^2}}}{p_n^{\frac {(1 - 1/\kappa)c}2 }\log n} \rightarrow 0$, as $n \rightarrow \infty$, for some constant $\kappa > 1$.
\end{assumption}
\noindent
This assumption provides the rate at which the variance of the slab prior is required to grow to guarantee desired model selection consistency. Similar conditions on the hyperparameter can be seen in \citep*{Narisetty:He:2014, Shin.M:2015, Johnson:Rossell:2012}.
\begin{assumption} \label{assumption:alpha_2_multi}
	There exists a constant $c > 0$, such that the hyperparameters in model (\ref{model4}) satisfy $0 \le \lambda_{1n}, \lambda_{2n} < c$ and the shape parameters in model (\ref{model6}) satisfies $0 < \alpha_{1n} < c$, $\alpha_{2} \sim \max\left\{p_n^c, d_n^{\frac{2c}{c-2}}\right\}$, for $c > 2\kappa$, for some $\kappa > 1$.
\end{assumption}
\noindent
This assumption provides the rate at which the shape parameter needs to grow to ensure desired consistency. Previous literature with Erdos-Renyi priors puts restrictions on the rate of the edge probability. In particular,  previous work \citep*{CKG:2017} assumes $q = e^{-\eta_nn}$, where $\eta_n = d_n (\frac{\log p_n}{n})^{\frac{1/2}{1+k/2}}$ for some $k > 0$ to penalize large models. Similar assumptions on the hyperparameters can be also found in \citep*{yang:2016,Narisetty:He:2014} under regression setting. In Section \ref{sec:simulation:model:selection}, we will see the proposed model without specifying particular values for $q$ helps avoiding the potential computation limitation such as simulation results always favor the most sparse model.

For the rest of this paper, $p_n$, ${\Omega}_0^n$, $\Sigma_0^n$,$ L_0^n, D_0^n, 
Z_0^n, Z^n, d_n, \tau_n, s_n$, $\alpha_{1n}$, $\alpha_{2n}$ will be denoted as $p$, $ {\Omega}_0$, $\Sigma_0$, $L_0$, $D_0$, $Z_0, Z, d, \tau, s$, $\alpha_1, \alpha_2$ by leaving out the superscript for notational convenience. 
\subsection{Posterior Ratio Consistency}
We now state and prove the main model selection consistency results. The proofs for all the theorems will be provided in Section \ref{sec:proof_thm4} and Section \ref{sec:proof_thm5}. Our first result 
establishes what we refer to as ``posterior ratio consistency"  (following the terminology in \citep*{CKG:2017}). This notion of consistency implies that the true model will be the mode of the posterior distribution among all the 
models with probability tending to $1$ as $n \rightarrow \infty.$
\begin{theorem} \label{thm4}
	Under Assumptions 1-6, the following holds: 
	$$
	\max_{Z \ne Z_0} \frac{\pi(Z|\bm{Y})}
	{\pi(Z_0|\bm{Y})} \stackrel{\bar{P}}{\rightarrow} 0, \mbox{ as } n \rightarrow \infty. 
	$$
\end{theorem}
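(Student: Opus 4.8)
The plan is to begin from the explicit upper bound in Lemma \ref{newlemma1}, which reduces the theorem to showing that $M_1 \prod_{j=1}^{p-1} PR'_j(Z,Z_0) \to 0$ uniformly over all $Z \ne Z_0$ on a set of probability tending to one. Since $M_1 = e^{2\alpha_1^2 + 2\alpha_1 + 2/\alpha_1}$ is bounded under Assumption \ref{assumption:alpha_2_multi}, the task is to control the product of the column-wise ratios. Because $PR'_j = 1$ whenever $Z_j = {Z_0}_j$, only the disagreeing columns contribute, so I would partition the non-true models according to the nature of the disagreement in each column: in column $j$ either $Z_j \supsetneq {Z_0}_j$ (purely overfitted), or $Z_j \not\supseteq {Z_0}_j$ (at least one true signal missed, possibly together with spurious edges). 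The maximum over $Z \ne Z_0$ is then handled by bounding, uniformly, the ``underfitted'' and ``overfitted'' contributions separately and taking a union bound over the combinatorially many candidate supports.

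Before treating the two cases, I would establish the required probabilistic control on the sample-based quantities. Using Assumption 1 (bounded eigenvalues of $\Omega_0$) together with standard concentration of the sample covariance, $\lVert S - \Sigma_0 \rVert_{\max} = O_{\bar P}(\sqrt{\log p/n})$, I would show that on an event of probability tending to one the conditional variances $\tilde S_{j|Z_j}$ and the relevant sub-block determinants of $\tilde S = S + \frac{1}{n\tau^2} I_p$ are uniformly comparable, up to constants, to their population analogues for every column and every support of size at most $R_n$. Assumption \ref{assumption:Z} restricts supports to size $O(n/\log n)$, keeping the union bounds manageable, while Assumption \ref{assumption:d_n} makes the effective dimension $d\sqrt{\log p/n}$ negligible so that these approximations hold uniformly.

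For the underfitted columns, the key mechanism is the beta-min condition (Assumption \ref{assumption:s_n}). Omitting a true edge whose coefficient has magnitude at least $s_n$ forces the residual conditional variance $\tilde S_{j|Z_j}$ to exceed $\tilde S_{j|{Z_0}_j}$ by an amount bounded below, with high probability, by a constant multiple of $s_n^2$, and this inflation persists even when spurious edges are also present in the column; the factor $\left(\tilde S_{j|{Z_0}_j}/\tilde S_{j|Z_j}\right)^{n/2 + \lambda_1}$ in $PR'_j$ is then at most $\exp(-c' n s_n^2)$ for some $c' > 0$. Since Assumption \ref{assumption:s_n} gives $n s_n^2 \gg d\log p$, this exponential decay dominates both the combinatorial factor from the number of underfitted supports and any growth from the accompanying Beta-prior and $(n\tau^2)^{\pm 1/2}$ terms, driving the underfitted contribution to zero.

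The overfitted case is where I expect the main difficulty. Here no true signal is missed, so $\tilde S_{j|Z_j}$ only decreases by a stochastically small, chi-square type amount relative to $\tilde S_{j|{Z_0}_j}$, and the factor $\left(\tilde S_{j|{Z_0}_j}/\tilde S_{j|Z_j}\right)^{n/2+\lambda_1}$ can grow like $\exp(O(k\log p))$ when $k$ spurious edges are added in a column. The decay must therefore come from the prior: each extra edge contributes a factor $(n\tau^2)^{-1/2}$ together with a Beta ratio $B(\alpha_1+|Z_j|,\alpha_2)/B(\alpha_1+|{Z_0}_j|,\alpha_2)$ that supplies a penalty of order $\alpha_2^{-k} \sim p^{-ck}$ (up to factors polynomial in $R_n$) because $\alpha_2 \sim \max\{p^c, d^{2c/(c-2)}\}$. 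The crux is to show, via sharp tail bounds on the relevant quadratic forms of $\tilde S$, that the exponential fluctuation factor is at most $p^{O(k)}$ uniformly over the $\binom{p}{k}$ ways of choosing the spurious edges, so that the net per-edge contribution is $p^{-ck}\cdot p^{O(k)} \to 0$; this is precisely where the largeness of $c$ (with $c > 2\kappa$, $\kappa > 1$) and Assumption \ref{assumption:tau} on $\tau$ are used. Combining the two cases and summing the resulting geometric-type bounds over all $Z \ne Z_0$ then yields the claimed convergence.
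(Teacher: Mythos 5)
Your proposal follows essentially the same route as the paper's proof: it starts from Lemma \ref{newlemma1}, restricts attention to the complement of the concentration event $C_n$ for $\Vert S - \Sigma_0 \Vert_{\max}$, and controls $PR^\prime_j(Z,Z_0)$ column by column, beating the $p^{O(k)}$ sampling fluctuations with the $p^{-ck}$ prior penalty coming from $\alpha_2 \sim \max\{p^c, d^{2c/(c-2)}\}$ in the overfitted case (the paper's Lemma \ref{lm4}) and using the beta-min driven $\exp(-c' n s_n^2)$ decay in the underfitted case (the paper's Lemma \ref{lm5}). The only difference is organizational: for columns that simultaneously miss a true edge and contain spurious ones, the paper factorizes $PR^\prime_j(Z,Z_0) \le PR^\prime_j(Z,Z^*)\, PR^\prime_j(Z^*,Z_0)$ with $Z^*_j = Z_j \cap {Z_0}_j$ (Lemma \ref{lm6}) and reuses the two pure-case bounds, whereas you fold this case into the underfitted analysis by arguing that the $s_n^2$-inflation of the residual variance persists in the presence of spurious edges; both devices rest on the same two mechanisms and the same assumptions, so this is not a genuinely different proof.
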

\noindent
\subsection{Model Selection Consistency for Posterior Mode}
If one was interested in a point estimate of $Z$ which reflects the sparsity pattern of $L_0$, the most apparent choice would be the posterior mode defined as\begin{equation} \label{a4}
\hat{Z} =  \argmax_{Z} \pi(Z|\bm{Y}).
\end{equation} From a frequentist point of view, it would be natural to obtain if we have 
model selection consistency for the posterior mode, which follows immediately from posterior ratio 
consistency established in Theorem \ref{thm4}, by noting that 
$
\max_{Z \ne Z_0} \frac{\pi(Z|\bm{Y})}
{\pi(Z_0|\bm{Y})} < 1 \Rightarrow \hat{Z}= Z_0. 
$ Therefore, we have the following corollary. 
\begin{cor} \label{cor1}
	Under Assumptions 1-6, the posterior mode $\hat{Z}$ is equal to the 
	true model $Z_0$ with probability tending to $1$, i.e., 
	$$
	\bar{P}(\hat{Z} = Z_0) \rightarrow 1, \mbox{ as } n \rightarrow \infty. 
	$$ 
\end{cor}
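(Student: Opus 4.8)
The plan is to obtain Corollary \ref{cor1} as an immediate consequence of the posterior ratio consistency established in Theorem \ref{thm4}, exploiting precisely the elementary implication already flagged in the text: whenever the largest posterior ratio $\max_{Z \ne Z_0} \pi(Z|\bm{Y})/\pi(Z_0|\bm{Y})$ falls strictly below $1$, the true model $Z_0$ strictly dominates every competitor in posterior mass and is therefore the unique maximizer $\hat{Z}$ in (\ref{a4}). Thus the entire argument reduces to translating the convergence-in-probability statement of Theorem \ref{thm4} into a statement about the posterior mode through a set inclusion.

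Concretely, I would first recall that Theorem \ref{thm4} gives $\max_{Z \ne Z_0} \pi(Z|\bm{Y})/\pi(Z_0|\bm{Y}) \stackrel{\bar{P}}{\rightarrow} 0$ as $n \rightarrow \infty$. Convergence in $\bar{P}$-probability to $0$ means in particular that for the fixed threshold $1$ we have $\bar{P}\bigl(\max_{Z \ne Z_0} \pi(Z|\bm{Y})/\pi(Z_0|\bm{Y}) \ge 1\bigr) \rightarrow 0$, or equivalently, writing $A_n := \{\max_{Z \ne Z_0} \pi(Z|\bm{Y})/\pi(Z_0|\bm{Y}) < 1\}$, that $\bar{P}(A_n) \rightarrow 1$. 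On the event $A_n$ one has $\pi(Z|\bm{Y}) < \pi(Z_0|\bm{Y})$ for every $Z \ne Z_0$, so $Z_0$ attains the strict posterior maximum and hence $\hat{Z} = Z_0$ by definition of the posterior mode. This yields the inclusion $A_n \subseteq \{\hat{Z} = Z_0\}$, whence $\bar{P}(\hat{Z} = Z_0) \ge \bar{P}(A_n) \rightarrow 1$, which is the assertion of the corollary.

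Since the two events in fact coincide (a strict posterior maximum at $Z_0$ is logically equivalent to all competing ratios lying below $1$), there is no genuine analytic obstacle at this stage: all of the heavy lifting, namely controlling the marginal posterior ratio via Lemma \ref{newlemma1} together with the prior ratio bound of Lemma \ref{graph_ratio_lemma}, and verifying the required uniform decay over all $2^{\binom{p}{2}}$ competing models under Assumptions 1-6, has already been carried out in the proof of Theorem \ref{thm4}. The only points deserving a word of care are that the ratios are well defined because $\pi(Z_0|\bm{Y}) > 0$ with $\bar{P}$-probability tending to one, and that the maximum is taken over a finite (albeit exponentially large) collection of models, so the maximal ratio is attained and $A_n$ and $\{\hat{Z} = Z_0\}$ are bona fide measurable events.
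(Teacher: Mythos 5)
Your proposal is correct and follows exactly the paper's own route: the corollary is deduced immediately from Theorem \ref{thm4} via the implication $\max_{Z \ne Z_0} \pi(Z|\bm{Y})/\pi(Z_0|\bm{Y}) < 1 \Rightarrow \hat{Z} = Z_0$, translated into probabilities through the event inclusion you describe. The extra remarks on measurability and positivity of $\pi(Z_0|\bm{Y})$ are harmless refinements of the same argument, not a different approach.
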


\begin{remark} \label{spikeslabregression}
	In the context of linear regression, \citet{Xu:Ghosh:2015} tackle the Bayesian group lasso problem. In particular, the authors propose the following hierarchical Bayesian model:
	\begin{eqnarray*}
		& & \bm Y \mid X, \bm{\beta}, \sigma^2 \sim N(X \bm{\beta}, \sigma^2 I)\\
		& & \bm \beta_g  \overset{ind}{\sim} \sigma^2, \tau_g^2 \sim (1-\pi_0)N(0, \sigma^2 \tau_{g}^2I) + \pi_0 \delta_0(\bm \beta_g), \quad g = 1,2,\ldots, G,\\ 
		& & \tau_g^2 \overset{ind}{\sim} \mbox{Gamma }(\frac{m_g + 1}{2}, \frac{\lambda^2}{2}), \quad g = 1,2,\ldots, G,\\
		& & \sigma^2 \overset{ind}{\sim} \mbox{Inverse-Gamma }(\alpha_1, \alpha_2). 
	\end{eqnarray*}
	\noindent
	In particular, they impose an independent spike and slab type prior on each factor $\bm \beta_g$ (conditional on the variance parameter $\sigma^2$), and an inverse Gamma prior on the variance. Each regression factor is explicitly present in the model with a probability $\pi_0$. In 
	this setting under an orthogonal design, the authors in \citep*{Xu:Ghosh:2015} establish oracle property and variable selection consistency for the median thresholding estimator of the regression coefficients on the group level. Note that with parent ordering, the off-diagonal entries in the $i^{th}$ 
	column of $L$ can be interpreted as the linear regression coefficients corresponding to fitting the $i^{th}$ 
	variable against all variables with label greater than $i$. Hence, there are similarities with respect to the model and consistency results between \citep{Xu:Ghosh:2015} and this work. However, despite these similarities, fundamental differences exist in these models and the corresponding analysis. Firstly, the number of groups (or factors) is considered to be fixed in \citep{Xu:Ghosh:2015}, while we allow the number of predictors to grow at an exponential rate of $n$ in a ultra high-dimensional setting, which creates more theoretically challenges. Secondly, the `design' matrices corresponding to the regression coefficients in each column of $L$ which can be represented as functions of the sample covariance matrix $S$ are random and correlated with each other, while \citep{Xu:Ghosh:2015} only considers the orthogonal design where $X^TX = I$ with no correlation introduced. Thirdly, the consistency result in \citep{Xu:Ghosh:2015} focuses only on group level selection only and is tailored for problems that only require group level sparsity,  while our model can induce sparsity in each individual element of $L$. The authors also propose a Bayesian hierarchical model referred to as Bayesian sparse group lasso to enable shrinkage both at the group level and within a group. However, no consistency results are addressed regarding this model. Lastly, in our model, each coefficient is present independently with multiplicative prior that incorporates information that $L$ is sparse, which is not the case in \citep{Xu:Ghosh:2015} as each factor is present with $\pi_0 = 0.5$. In particular, all the aspects discussed above lead to major differences and further 
	challenges in analyzing the ratio of posterior probabilities.
\end{remark}
\noindent

\subsection{Strong Model Selection Consistency}
\noindent
Next we establish another stronger result (compared to Theorem \ref{thm4}) which implies that the posterior mass assigned to 
the true model $Z_0$ converges to 1 in probability (under the true model). Following \citep*{Narisetty:He:2014, CKG:2017}, we refer to this notion of consistency as strong selection consistency.
\begin{theorem}\label{thm5}
	Under Assumptions 1-6, the following holds:
	$$
	\pi(Z_0 | \bm{Y}) \stackrel{\bar{P}}{\rightarrow} 1, \mbox{ as } n \rightarrow 
	\infty. 
	$$
\end{theorem}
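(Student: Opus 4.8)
The plan is to leverage the posterior ratio consistency of Theorem \ref{thm4} together with the sharper, column-factorized bound of Lemma \ref{newlemma1}, the essential new difficulty being that strong consistency requires controlling a \emph{sum} over exponentially many competing models rather than merely their maximum. Starting from the identity
\begin{equation*}
\pi(Z_0 \mid \bm{Y})^{-1} = 1 + \sum_{Z \neq Z_0} \frac{\pi(Z \mid \bm{Y})}{\pi(Z_0 \mid \bm{Y})},
\end{equation*}
I would reduce the theorem to showing that $\sum_{Z \neq Z_0} \pi(Z \mid \bm{Y})/\pi(Z_0 \mid \bm{Y}) \stackrel{\bar{P}}{\rightarrow} 0$. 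Assumption 4, which forces $\pi(Z) = 0$ as soon as some column satisfies $|Z_j| \ge R_n$, restricts the sum to the product space of models with $\max_{j} |Z_j| < R_n$, and this is precisely what makes the combinatorial bookkeeping tractable.

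Next, I would apply Lemma \ref{newlemma1} to bound each ratio by $M_1 \prod_{j=1}^{p-1} PR'_j(Z, Z_0)$. Because the supports $Z_1, \ldots, Z_{p-1}$ of distinct columns range over an (independent) product set on the prior's support and $PR'_j$ depends only on $Z_j$, the sum over $Z$ factorizes across columns:
\begin{equation*}
\sum_{Z \neq Z_0} \frac{\pi(Z \mid \bm{Y})}{\pi(Z_0 \mid \bm{Y})} \le M_1 \left( \prod_{j=1}^{p-1} T_j - 1 \right), \qquad T_j := \sum_{Z_j} PR'_j(Z_j, Z_{0j}),
\end{equation*}
where the $-1$ removes the excluded term $Z = Z_0$ (for which every $PR'_j = 1$). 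Since $M_1$ is a bounded constant under Assumption 6, it remains to show $\prod_j T_j \to 1$. Writing $T_j = 1 + U_j$ with $U_j = \sum_{Z_j \neq Z_{0j}} PR'_j \ge 0$ and using $\prod_j (1 + U_j) \le \exp(\sum_j U_j)$, the entire problem collapses to establishing
\begin{equation*}
\sum_{j=1}^{p-1} \sum_{Z_j \neq Z_{0j}} PR'_j(Z_j, Z_{0j}) \stackrel{\bar{P}}{\rightarrow} 0.
\end{equation*}

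To bound the inner sum for a fixed column $j$, I would partition the competing supports according to how they differ from the truth. For \emph{overfitting} columns ($Z_j \supsetneq Z_{0j}$ with $k$ spurious entries), there are at most $\binom{p-j}{k} \le p^k$ such supports, and each contributes the prior penalty $B(\alpha_1 + |Z_{0j}| + k, \alpha_2)/B(\alpha_1 + |Z_{0j}|, \alpha_2) \asymp \alpha_2^{-k}$ times the slab factor $(n\tau^2)^{-k/2}$ and a data-dependent determinant/conditional-variance ratio that concentrates near $1$. With $\alpha_2 \sim p^c$ for $c > 2\kappa$ (Assumption 6), the product $p^k \alpha_2^{-k}$ decays geometrically in $k$, so summing the resulting geometric series leaves a per-column overfitting contribution of order $p^{1-c}(n\tau^2)^{-1/2} = o(1/p)$; aggregating over the $p-1$ columns then gives $o(1)$. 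For columns that \emph{underfit} (miss at least one true signal), the factor $\bigl(\tilde S_{j \mid Z_{0j}}/\tilde S_{j \mid Z_j}\bigr)^{n/2 + \lambda_1}$ decays exponentially at rate $n s_n^2$ by the beta-min Assumption 3, which, via a union bound, dominates the at most $2^{R_n}$ such supports provided $n s_n^2$ overwhelms $d_n \log p_n$ — exactly the content of Assumption 3.

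The hard part will be making the overfitting and underfitting bounds \emph{uniform} over all the exponentially many models simultaneously, because the quantities $\tilde S_{j \mid Z_j}$ and $|\tilde S_Z^{\ge j}|$ are random and correlated across every $Z$ through the single sample covariance $S$. Controlling them requires constructing a high-probability event on which all the conditional-variance and determinant ratios entering $PR'_j$ are simultaneously pinned near their population analogues; this is where Assumptions 1, 2 and 5 (bounded eigenvalues of $\Omega_0$, $d_n\sqrt{\log p_n/n}\to 0$, and the prescribed growth of $\tau^2$) drive a uniform concentration argument, and where the delicate balance between the combinatorial factor $\binom{p}{k}$ and the prior/slab penalties must be verified to persist on that event with probability tending to one.
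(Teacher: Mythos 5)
Your overall architecture is the same as the paper's, and in one respect it is better: the paper's proof of Theorem \ref{thm5} passes from $\sum_{Z \neq Z_0} \pi(Z|\bm Y)/\pi(Z_0|\bm Y)$ to a sum over single-column configurations $\sum_{j}\sum_{Z_j \neq Z_{0j}}$ weighted by binomial coefficients without ever justifying how a sum over full models (whose other columns are unconstrained) collapses to a per-column sum. Your factorization over the product support enforced by Assumption \ref{assumption:Z} --- $\sum_{Z\neq Z_0}\prod_j PR'_j = \prod_j(1+U_j)-1 \le \exp\bigl(\sum_j U_j\bigr)-1$ with $U_j = \sum_{Z_j\neq Z_{0j}}PR'_j$ --- is exactly the missing justification, and it correctly reduces the theorem to $\sum_j U_j \stackrel{\bar P}{\to} 0$, which is what the paper's final display actually bounds using Lemmas \ref{lm4}--\ref{lm6}.

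However, your per-column accounting has two genuine flaws. First, in the overfitting case you assert that the determinant/conditional-variance ratio ``concentrates near $1$,'' and your closing paragraph proposes pinning these quantities ``near their population analogues'' uniformly. This is not available: with $p \gg n$ and model sizes allowed up to $R_n \sim n/\log n$, the relevant submatrices of $S$ are singular or ill-conditioned, so no such uniform concentration holds. The paper instead uses the deterministic regularization bound (\ref{new7}), $\bigl(|\tilde S_{Z_0}^{\ge j}|/|\tilde S_{Z}^{\ge j}|\bigr)^{1/2} \le (n\tau_{n,p}^2)^{k/2}$, which \emph{exactly cancels} the slab factor $(n\tau^2)^{-k/2}$ you keep in your bound; the count $p^k$ is then beaten purely by the prior penalty $B(\alpha_1+|Z_j|,\alpha_2)/B(\alpha_1+|Z_{0j}|,\alpha_2) \lesssim p^{-ck}$ together with a chi-square correction of size $e^{O(k + \sqrt{k\log p})}$, giving $(2p)^{-ck/\kappa}$ per model. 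Second, and more seriously, your underfitting union bound fails as stated. Assumption \ref{assumption:s_n} gives $ns_n^2 \gg d\log p$, which does \emph{not} dominate $\log(2^{R_n}) \sim n/\log n$: for instance $s_n^2 \asymp d\log p \cdot \log\log n/n$ with $\log p = n^{1/3}$ and bounded $d$ satisfies all the assumptions, yet $2^{R_n}e^{-c\,ns_n^2}\to\infty$. Moreover the class ``misses at least one true signal'' contains the non-nested supports, of which there are up to $\sum_{k\le R_n}\binom{p}{k}$, and no beta-min decay can beat that count by itself. The repair is the paper's three-way split: pure subsets number only $2^{|Z_{0j}|}\le 2^d$ (then Assumption \ref{assumption:s_n} does suffice, since $ns_n^2 \gg d\log p \gg d$, which is how Lemma \ref{lm5} is used), while mixed supports are handled in Lemma \ref{lm6} by factoring through $Z_j^* = Z_j\cap Z_{0j}$ via $PR'_j(Z,Z_0)\le PR'_j(Z,Z^*)\,PR'_j(Z^*,Z_0)$, so that the $\binom{p}{k}$-type count is absorbed by the spurious-entry prior penalty and not by the beta-min term.
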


\begin{remark} \label{edgerestrictassumption:multi}
	We would like to point out that our posterior ratio consistency and strong model selection consistency do not require any additional assumptions on bounding the maximum number of edges. In particular, \citet*{CKG:2017} consider only the DAGs with the total number of edges at most $ \frac{1}{8} d \left( \frac{n}{\log p}
	\right)^{\frac{1+k}{2+k}}$ for $k > 0$. By the assumptions in the previous work, it follows that the DAGs in the analysis do not include the models where the Cholesky factor has one or more non-zero elements for each column, since $p/ \frac{1}{8} d \left( \frac{n}{\log p}
	\right)^{\frac{1+k}{2+k}} \rightarrow \infty$, as $n \rightarrow \infty$, while in our result, each row can have at most $R_{n} \sim \frac n {\log n}$ number of non-zero entries as indicated in Assumption \ref{assumption:Z}. Hence, our strong model selection consistency results is more general than \citep*{CKG:2017, LLL:2018} in the sense that the consistency holds for a larger class of DAGs.
\end{remark}
\section{Results for Beta-mixture Prior} \label{sec:beta_mixture}
Though the multiplicative prior could allow variation among the indicator probabilities, the intractable marginal posteriors remain problematic in practice. The authors in \citep*{Tan:2017} address this issue via Laplace approximation. However, the computational cost for that will become extensive as $p$ increases. To obtain the marginal posterior probabilities in closed form and for ease of computation, we consider the following beta-mixture prior over the space of $Z$ introduced in \citep*{Carvalho:Scott:2009},
\begin{eqnarray}
& & Z_{kj} \mid q \overset{i.i.d}{\sim} \mbox{Bern} (q), \quad 1 \le j < k \le p, \label{model9} \\ 
& & q \sim \mbox{Beta}(\alpha_1,\alpha_2), \label{model10}
\end{eqnarray}
 where $\mbox{Bern} (q)$ denotes the Bernoulli distribution with probability $q$, and $\mbox{Beta}(\alpha_1,\alpha_2)$ represents the beta distribution with shape parameters $\alpha_1, \alpha_2$. We refer to model (\ref{model9}) and (\ref{model10}) as the beta-mixture prior over the space of latent variables indicating the sparsity structure for the Cholesky factor. 
 \begin{remark} \label{remark:beta_mixture}
 	\citet*{CKG:2017, Banerjee:Ghosal:2015} introduce an Erdos-Renyi type of 
 	distribution on the space of DAGs as the prior distribution for DAGs, where each directed edge is present with 
 	probability $q$ independently of the other edges. In particular, they define $\gamma_{ij} = \mathbb{I} \{(i,j) \in 
 	E(\mathscr D)\}$, $1 \le i < j \le p$ to be the edge indicator and let $\gamma_{ij}$, $1 \le 
 	i < j < p$ be independent identically distributed Bernoulli($q$) random variables. \citet*{CKG:2017} establish the DAG selection consistency under suitable assumptions. while \cite{Banerjee:Ghosal:2015} address the estimation
 	consistency, and provide high-dimensional Laplace approximations for the marginal 
 	posterior probabilities for the graphs. In our framework, we extend the previous work by putting a beta distribution on the edge probability $q$. The beta-mixture type of priors have previously been placed on graphs for simulation purpose in \citep{Carvalho:Scott:2009}, but the theoretical properties have yet to be investigated. A clear advantage of such an approach as indicated in \citep{Carvalho:Scott:2009} is that treating the previous fixed tuning constant $q$ as a model parameter shrinks the graph size to a data-determined value of $q$, and allows strong control over the number of spurious edges. 
 \end{remark}
\noindent
In order to obtain the posterior consistency for $Z$, we need the following lemma, which specifies the closed form for the marginal posterior density of $Z$ with proof provided in Section \ref{sec:proof_thm13}.
\begin{lemma} \label{newlemma}
	The marginal posterior density $\pi(Z|Y)$ under the beta-mixture prior satisfies
	\begin{align} \label{posterior_propto}
	&\pi(Z | \bm Y) \nonumber\\
	\propto& B\left(\alpha_1(p-1)+ \sum_{j = 1}^{p-1}|Z_j|, \alpha_2(p-1) + \frac{p(p-1)}{2}-
	\sum_{j = 1}^{p-1}|Z_j|\right) \nonumber\\
	&\times \prod_{j = 1}^{p-1}  \left(\frac{n\tilde S_{j|Z_j}}{2}- \frac 1 {2\tau^2} + \lambda_2\right)^{-\frac n 2 - \lambda_1}  \frac{|\tilde S_Z^{>j}|^{-\frac 1 2}}{(n\tau^2)^{|Z_j|/2}}\nonumber\\ 
	= & B\left(\alpha_1(p-1)+ \sum_{j = 1}^{p-1}|Z_j|, \alpha_2(p-1) + \frac{p(p-1)}{2}-
	\sum_{j = 1}^{p}|Z_j|\right) \nonumber\\
	&\times \prod_{j = 1}^{p-1} \left(\frac{n\tilde S_{j|Z_j}}{2} - \frac 1 {2\tau^2} + \lambda_2\right)^{-\frac n 2 - \lambda_1} \frac{\left(|\tilde S_{Z}^{\ge i}|\tilde{S}_{j|Z_j}\right)^{-\frac 1 2}}{(n\tau^2)^{|Z_j|/2}},
	\end{align}
	in which $\tilde{S} = S+\frac 1 {n\tau^2} I_p$, $\tilde{S}_{j|Z_j} = \tilde{S}_{jj} - (\tilde{S}_{Z \cdot j}^>)^T 
	(\tilde{S}_Z^{>j})^{-1} \tilde{S}_{Z \cdot j}^>$ and $B(a,b) = \frac{\Gamma(a)\Gamma(b)}{\Gamma(a+b)}.$ The second equation follows from $|\tilde{S}_Z^{>j}| = |\tilde S_{Z}^{\ge i}|\left(\tilde{S}_{jj} - (\tilde{S}_{Z \cdot j}^>)^T 
	(\tilde{S}_Z^{>j})^{-1} \tilde{S}_{Z \cdot j}^>\right) = |\tilde S_{Z}^{\ge i}|\tilde{S}_{j|Z_j}.$
\end{lemma}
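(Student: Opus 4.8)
The plan is to start from Bayes' rule in the form $\pi(Z\mid\bm Y)\propto \pi(Z)\,m(\bm Y\mid Z)$, where $m(\bm Y\mid Z)=\int\!\int f(\bm Y\mid L,D)\,\pi(L\mid D,Z)\,\pi(D)\,dL\,dD$ is the marginal likelihood and the proportionality absorbs the $Z$-free normalizing constant $\sum_{Z'}\pi(Z')m(\bm Y\mid Z')$. I would evaluate the two factors separately. For the marginal prior, integrating the single shared parameter $q$ out of (\ref{model9})--(\ref{model10}) is a beta--binomial computation: conditionally on $q$ the $\binom{p}{2}$ indicators contribute $q^{\sum_j|Z_j|}(1-q)^{\frac{p(p-1)}{2}-\sum_j|Z_j|}$, and integrating this against the $\mathrm{Beta}(\alpha_1,\alpha_2)$ density yields, by conjugacy, the Beta function on the first line of (\ref{posterior_propto}), carrying the total edge count $\sum_j|Z_j|$ and the total number $\frac{p(p-1)}{2}$ of candidate edges as its arguments.

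The substantive work is $m(\bm Y\mid Z)$. The structural fact I would exploit is that the modified Cholesky parametrization makes the Gaussian likelihood factor across columns: writing $\mathrm{tr}(\Omega S)=\sum_{j=1}^{p} d_j^{-1}\bigl(S_{jj}+2(\bm\beta_j)^{T}S_{Z\cdot j}^{>}+(\bm\beta_j)^{T}S_{Z}^{>j}\bm\beta_j\bigr)$ with $\bm\beta_j:=L_{Z\cdot j}^{>}$ the active entries of column $j$, and using $|\Omega|=\prod_j d_j^{-1}$, the integrand splits into a product of independent column-wise integrals. For each $j$ the prior (\ref{model2}) puts $N(\bm 0,\tau^2 d_j I_{|Z_j|})$ on $\bm\beta_j$, so the inner integral over $\bm\beta_j$ is a conjugate Gaussian integral. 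Completing the square against the regularized matrix $n\tilde S_Z^{>j}=nS_Z^{>j}+\tau^{-2}I$ produces the factor $(n\tau^2)^{-|Z_j|/2}|\tilde S_Z^{>j}|^{-1/2}$ together with a residual exponent that collapses to $-\frac{n}{2 d_j}\bigl(\tilde S_{j|Z_j}-\frac{1}{n\tau^2}\bigr)$; the Schur complement $\tilde S_{j|Z_j}$ appears precisely because $\tilde S_{Z\cdot j}^{>}=S_{Z\cdot j}^{>}$, since adding $\frac{1}{n\tau^2}I$ perturbs only the diagonal of $S$.

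I would then integrate $d_j$ against its inverse-gamma$(\lambda_1,\lambda_2)$ prior (\ref{model4}). After the Gaussian step the remaining $d_j$-dependence is $d_j^{-n/2-\lambda_1-1}\exp(-C_j/d_j)$ with $C_j=\frac{n\tilde S_{j|Z_j}}{2}-\frac{1}{2\tau^2}+\lambda_2$, so the elementary identity $\int_0^{\infty}x^{-a-1}e^{-C/x}\,dx=\Gamma(a)C^{-a}$ with $a=\frac n2+\lambda_1$ delivers the factor $\bigl(\frac{n\tilde S_{j|Z_j}}{2}-\frac{1}{2\tau^2}+\lambda_2\bigr)^{-n/2-\lambda_1}$. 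Collecting the columns and discarding every factor free of $Z$ --- the $\Gamma(\frac n2+\lambda_1)$, $\lambda_2^{\lambda_1}/\Gamma(\lambda_1)$ and $2\pi$ terms, and the $j=p$ column for which $Z_p=\varnothing$ identically --- reproduces the product over $j=1,\dots,p-1$ on the second line of (\ref{posterior_propto}). The alternative form on the last two lines then follows from the block-determinant (Schur complement) identity linking $|\tilde S_Z^{\ge j}|$, $|\tilde S_Z^{>j}|$ and $\tilde S_{j|Z_j}$, exactly as recorded at the end of the statement.

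I expect the main obstacle to be the column-wise $(L,D)$ integration rather than the prior step: specifically, carrying out the Gaussian completion of the square against $\tilde S_Z^{>j}$ cleanly and checking that the residual reconstitutes the Schur complement $\tilde S_{j|Z_j}$ exactly, while simultaneously bookkeeping which multiplicative constants depend on $Z$ and which do not, so that only the latter may be legitimately absorbed into the proportionality. The beta--binomial integration and the inverse-gamma integral are routine once this is in place.
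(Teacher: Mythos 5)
Your proposal is correct and follows essentially the same route as the paper's own proof: a beta--binomial conjugacy step for $\pi(Z)$, Bayes' rule, column-wise factorization of the Gaussian likelihood with completion of the square against $\tilde S_Z^{>j}$ (yielding the residual $-\frac{n}{2d_j}\bigl(\tilde S_{j|Z_j}-\frac{1}{n\tau^2}\bigr)$ and the factor $(n\tau^2)^{-|Z_j|/2}|\tilde S_Z^{>j}|^{-1/2}$), followed by the inverse-gamma integral over $d_j$ and the Schur-complement determinant identity. Your intermediate expressions, including the constant $C_j=\frac{n\tilde S_{j|Z_j}}{2}-\frac{1}{2\tau^2}+\lambda_2$ and the observation that $\tilde S_{Z\cdot j}^{>}=S_{Z\cdot j}^{>}$, match the paper's computation exactly.
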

\noindent
In particular, these posterior 
probabilities can be used to select a model representing the sparsity pattern of $L$ by computing the posterior mode that maximize the posterior densities.
The convenient closed form for the marginal posterior in (\ref{posterior_propto}) also yields nice posterior ratio consistency under the following weaker assumption on $\alpha_{2}$ compared with Assumption \ref{assumption:alpha_2_multi}.
\begin{assumption}  \label{assumption:alpha_2}
	There exists a constant $c > 0$, such that the hyperparameters in model (\ref{model4}) satisfy $0 \le \lambda_{1n}, \lambda_{2n} < c$ and the shape parameters in model (\ref{model6}) satisfies $0 < \alpha_{1n} < c$, $\alpha_{2n} \sim p^c$.
\end{assumption}
\begin{theorem} \label{thm1}
	Under Assumptions 1-5 and \ref{assumption:alpha_2}, the following holds under the beta-mixture prior: 
	$$
	\max_{Z \ne Z_0} \frac{\pi(Z|\bm{Y})}
	{\pi(Z_0|\bm{Y})} \stackrel{\bar{P}}{\rightarrow} 0, \mbox{ as } n \rightarrow \infty. 
	$$
\end{theorem}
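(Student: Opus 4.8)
\subsection*{Proof proposal for Theorem \ref{thm1}}

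The plan is to work directly with the \emph{exact} closed-form posterior of Lemma \ref{newlemma} rather than an upper bound, and to show that $\pi(Z\mid\bm Y)/\pi(Z_0\mid\bm Y)$ vanishes uniformly over $Z\ne Z_0$. Taking the ratio of (\ref{posterior_propto}) for $Z$ and for $Z_0$, the normalizing constants cancel and the ratio factors into a single \emph{global} Beta term depending only on the total edge count $|Z|=\sum_{j=1}^{p-1}|Z_j|$, multiplied by a product over columns $j$ of data-dependent factors built from the residual functional $\tilde S_{j|Z_j}$, the slab normalizer $(n\tau^2)^{-|Z_j|/2}$, and the determinant $|\tilde S_Z^{>j}|^{-1/2}$. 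This is precisely the per-column data structure already analyzed in the proof of Theorem \ref{thm4} via Lemma \ref{newlemma1}; the only genuinely new ingredient is the global Beta ratio
$$
\frac{B\!\left(\alpha_1(p-1)+|Z|,\ \alpha_2(p-1)+\tfrac{p(p-1)}{2}-|Z|\right)}{B\!\left(\alpha_1(p-1)+|Z_0|,\ \alpha_2(p-1)+\tfrac{p(p-1)}{2}-|Z_0|\right)},
$$
which replaces the column-wise Beta factors of the multiplicative analysis.

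First I would control this global Beta ratio. Writing $a=\alpha_1(p-1)$ and $b=\alpha_2(p-1)+\binom{p}{2}$, and using $B(x,y)=\Gamma(x)\Gamma(y)/\Gamma(x+y)$ together with the fact that $x+y=a+b$ is identical for $Z$ and $Z_0$, the ratio telescopes into a product of factors of the form $(a+\cdot)/(b-\cdot)$. Under Assumption \ref{assumption:alpha_2} one has $a\sim p$ and $b\sim\alpha_2 p\sim p^{c+1}$, while Assumption \ref{assumption:Z} keeps $|Z|\le (p-1)R_n$, so these orders are maintained; consequently each unit of \emph{net} excess edge count contributes a polynomially small factor of order $p^{-c}$ (up to logarithmic factors) and each unit of net deficit an inflation of order $p^{c}$. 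Thus the global Beta ratio is bounded above by $(Cp^{-c})^{|Z|-|Z_0|}$ for a constant $C$. Crucially, this Beta factor penalizes only the \emph{net} count; the \emph{per-column} overfitting penalty is instead supplied by the slab normalizer $(n\tau^2)^{-|Z_j|/2}$ in (\ref{posterior_propto}), governed by Assumption \ref{assumption:tau}. This reduction is what lets the beta-mixture proof run under the weaker scaling $\alpha_2\sim p^c$.

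With the Beta ratio reduced to $(Cp^{-c})^{|Z|-|Z_0|}$, the remaining argument reuses the column-wise analysis of Theorem \ref{thm4}. I would partition $\{Z\ne Z_0\}$ by how each $Z_j$ relates to ${Z_0}_j$, separating \emph{overfitting} columns ($Z_j\supsetneq{Z_0}_j$) from \emph{underfitting} columns ($Z_j\not\supseteq{Z_0}_j$). For overfitting columns, $\tilde S_{j|Z_j}$ differs from $\tilde S_{j|{Z_0}_j}$ only through fitting noise, so the likelihood factor $(\tilde S_{j|{Z_0}_j}/\tilde S_{j|Z_j})^{n/2+\lambda_1}$ stays bounded near $1$, and the product of the $(n\tau^2)^{-1/2}$ slab penalty per excess edge with the net Beta decay $p^{-c}$ drives the contribution to $0$ once $c$ is large enough (the role of $\kappa>1$). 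For underfitting columns, the beta-min Assumption \ref{assumption:s_n} forces $\tilde S_{j|Z_j}$ to exceed its true analogue by a gap of order $s_n^2$, making $(\tilde S_{j|{Z_0}_j}/\tilde S_{j|Z_j})^{n/2}$ exponentially small, of order $e^{-c' n s_n^2}$, which dominates the $p^{c}$ Beta inflation per missing edge precisely because $\log p/(n s_n^2)\to 0$ under Assumptions \ref{assumption:d_n}–\ref{assumption:s_n}. Throughout, I would replace the random $\tilde S_{j|Z_j}$ and the submatrix eigenvalues of $\tilde S_Z^{>j}$ by their population counterparts using the bounded-eigenvalue Assumption 1 and standard sample-covariance concentration.

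The hard part will be making all of these bounds hold \emph{uniformly} over the $2^{\binom{p}{2}}$ competing models (restricted by Assumption \ref{assumption:Z} to $\max_j|Z_j|<R_n$). This requires concentration inequalities for the residual variances $\tilde S_{j|Z_j}$ and for the eigenvalues of $\tilde S_Z^{>j}$ that are uniform over all admissible parent sets, so that a union bound over the $\binom{p}{k}$ choices of a size-$k$ parent set in each column, multiplied by the per-model decay, still tends to $0$. The delicate accounting is to verify that the overfitting decay $p^{(1-c)k}(n\tau^2)^{-k/2}$ beats the entropy $p^{k}$ and that the underfitting gap $e^{-c' n s_n^2}$ beats both the entropy $2^{d}$ of dropping true edges and the Beta inflation $p^{c}$; this is where the relaxed rates of Assumptions \ref{assumption:d_n}, \ref{assumption:s_n} and \ref{assumption:tau} must be balanced, and I expect it to demand the most care.
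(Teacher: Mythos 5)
Your proposal is correct and follows essentially the same route as the paper's own proof: start from the exact closed form in Lemma \ref{newlemma}, bound the global Beta ratio by a factor of order $p^{-c}$ per net excess edge (the paper's bound (\ref{ratio_B})), and then reuse the column-wise overfitting/underfitting analysis on the concentration event $C_n^c$ (the paper's Lemmas \ref{lm1}--\ref{lm3}, which parallel Lemmas \ref{lm4}--\ref{lm6} from the multiplicative case). The only cosmetic difference is that the paper treats the non-nested columns as a separate third case via the intersection model $Z^*_j = Z_j \cap {Z_0}_j$, whereas you fold them into the underfitting case; the substance, including the slab-normalizer penalty for excess edges and the $e^{-c' n s_n^2}$ gap from the beta-min condition for missing edges, is identical.
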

\noindent
The next theorem establishes the strong selection consistency under the beta-mixture prior. See proofs for Theorem \ref{thm1} and Theorem \ref{thm3} in Section \ref{sec:proof_thm13}.
\begin{theorem} \label{thm3}
	Under Assumptions 1-6, for the beta-mixture prior, the following holds:
	$$
	\pi(Z_0 | \bm{Y}) \stackrel{\bar{P}}{\rightarrow} 1, \mbox{ as } n \rightarrow 
	\infty. 
	$$
\end{theorem}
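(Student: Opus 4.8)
The plan is to prove Theorem~\ref{thm3} (strong selection consistency under the beta-mixture prior) by leveraging the closed-form marginal posterior density in Lemma~\ref{newlemma} together with the posterior ratio consistency already established in Theorem~\ref{thm1}. Since $\pi(Z_0 \mid \bm{Y}) = \left(1 + \sum_{Z \ne Z_0} \frac{\pi(Z \mid \bm{Y})}{\pi(Z_0 \mid \bm{Y})}\right)^{-1}$, the statement $\pi(Z_0 \mid \bm{Y}) \xrightarrow{\bar{P}} 1$ is equivalent to showing that the \emph{sum} of posterior ratios over all non-true models, $\sum_{Z \ne Z_0} \frac{\pi(Z \mid \bm{Y})}{\pi(Z_0 \mid \bm{Y})}$, converges to $0$ in probability. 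Theorem~\ref{thm1} only controls the \emph{maximum} ratio, which is strictly weaker: the number of competing models is super-exponential in $p$, so a uniform bound on individual ratios does not by itself control the sum. This gap is exactly what the strengthening from posterior ratio consistency to strong consistency must close.

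The approach I would take is to partition the collection of non-true models $Z$ according to how they differ from $Z_0$, and to obtain a bound on each individual ratio $PR_j(Z, Z_0)$ (the per-column factors appearing in Lemma~\ref{newlemma}, analogous to the $PR'_j$ in Lemma~\ref{newlemma1}) that decays geometrically in the size of the symmetric difference $|Z_j \triangle (Z_0)_j|$, with enough margin to beat the combinatorial count of models at each ``distance.'' Concretely, I would first split into the two canonical cases used for Cholesky/regression selection problems: models that \emph{overfit} (contain all true edges plus extra spurious ones, $Z_j \supseteq (Z_0)_j$) and models that \emph{underfit} (miss at least one true edge). For underfitting models, the beta-min condition (Assumption~\ref{assumption:s_n}) forces the quadratic-form term $\tilde{S}_{j|Z_j}$ to be inflated relative to $\tilde{S}_{j|(Z_0)_j}$ by an amount controlled by $s_n^2$, producing a factor that decays like $e^{-cns_n^2}$ per missing edge; this must dominate the prior Beta-function ratio and the combinatorial factor $\binom{p}{k}$ counting the ways to choose $k$ edges. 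For overfitting models, the gain comes from the slab-variance penalty $(n\tau^2)^{-|Z_j|/2}$ together with the prior Beta ratio, which under Assumption~\ref{assumption:alpha_2_multi} ($\alpha_2 \sim \max\{p^c, d^{2c/(c-2)}\}$ with $c > 2\kappa$) contributes a factor like $p^{-c/2}$ per spurious edge, again chosen large enough to overwhelm the $\binom{p}{k}$ model count.

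The key steps, in order, would be: (1) write $\pi(Z_0 \mid \bm{Y})^{-1} = 1 + \sum_{Z \ne Z_0} \prod_j PR_j(Z,Z_0)$ using Lemma~\ref{newlemma}; (2) establish, on a high-probability event governed by concentration of the sample covariance $S$ around $\Sigma_0$ (the event underlying the $\bar{P}$-convergence in Theorem~\ref{thm1}), a deterministic per-edge bound of the form $PR_j \le \rho_n^{|Z_j \triangle (Z_0)_j|}$ where $\rho_n \to 0$ fast; (3) group models by $m = \sum_j |Z_j \triangle (Z_0)_j|$ and bound the number of models with a given $m$ by a binomial coefficient $\binom{p(p-1)/2}{m}$; (4) sum the resulting geometric-type series $\sum_{m \ge 1} \binom{p(p-1)/2}{m} \rho_n^m$ and verify, using $\binom{N}{m} \le (eN/m)^m$, that $N \rho_n \to 0$ (where $N = p(p-1)/2$), so that the full sum is dominated by its first term and tends to $0$. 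I expect the main obstacle to be step~(2)–(3): separating the overfitting and underfitting regimes cleanly and confirming that the decay rates $\rho_n$ from the slab penalty ($\sim p^{-c/2}$) and from the beta-min signal strength ($\sim e^{-cns_n^2}$) are each strong enough, under Assumptions~\ref{assumption:d_n}--\ref{assumption:s_n} and \ref{assumption:alpha_2_multi}, to absorb the factor $N \sim p^2$ from the model count \emph{uniformly} over the partition, with the bound on $\max_{1\le j \le p-1}|Z_j| < R_n$ from Assumption~\ref{assumption:Z} needed to keep the quadratic-form and determinant terms controlled when aggregating across all $p-1$ columns simultaneously.
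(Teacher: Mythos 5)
Your proposal is correct and follows essentially the same route as the paper: the paper likewise writes $\bigl(1-\pi(Z_0\mid\bm{Y})\bigr)/\pi(Z_0\mid\bm{Y})=\sum_{Z\ne Z_0}\pi(Z\mid\bm{Y})/\pi(Z_0\mid\bm{Y})$, restricts to the covariance-concentration event, splits into overfitting, underfitting, and mixed cases (Lemmas \ref{lm1}--\ref{lm3}) with exactly the decay sources you identify (slab penalty plus prior ratio giving roughly $p^{-c/\kappa}$ per spurious edge, the beta-min condition giving $p^{-2cd/\kappa}$ for missed edges), and then beats the binomial model counts using $\binom{p}{k}\le p^k$ and $c>2\kappa$. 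Your grouping by total symmetric difference rather than the paper's column-by-column decomposition is only a cosmetic difference in bookkeeping, not a different argument.
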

\noindent
\begin{remark} \label{edgerestrictassumption}
	We would like to point out that posterior ratio consistency 
	(Theorem \ref{thm1} does not require any restriction on $c$ (the rate of the shape parameter in the beta distribution (\ref{model10})) that will be growing, this requirement is only needed for strong selection consistency 
	(Theorem \ref{thm3}). Similar restrictions on the hyperparameters have been 
	considered for establishing consistency properties in the regression setup. See \citep*{yang:2016, LLL:2018, CKG:nonlocal} for example. 
\end{remark}
\noindent
The closed form for the marginal posterior probability in (\ref{posterior_propto}) is convenient for showing the consistency. However, when it comes to simulation, the beta term in (\ref{posterior_propto}) pertaining to the beta-mixture prior is often too large, and could sometimes blow up when $p$ is relatively large. In addition, for the beta-mixture prior, probability $q$ is assumed to be universal across all indicators, which seems not flexible and diverse enough. In the following section, we will take on the task to investigate and evaluate the simulation performance for both the multiplicative model and the beta-mixture model.
\section{Simulation Studies} \label{sec:experiments}
In this section, we demonstrate our main results through simulation studies. First recall from (\ref{posterior_propto}) that the marginal posterior distributions for $Z$ under the beta-mixture prior can be derived analytically in closed form (up to a constant)  in (\ref{posterior_propto}). Therefore, we can evaluate the parameter space more clearly with this naturally assigned ``score", that is the posterior probability. 

For the multiplicative prior, the $\omega_j$ ($1 \le j \le p$) can not be integrated out, thus the closed form for the marginal distribution of $Z$ can not be conveniently acquired. As indicated in \citep*{Tan:2017}, evaluating the marginal densities via Monte Carlo becomes more computationally intensive as the dimension increases. Therefore, the authors propose to estimate these quantities efficiently through Laplace approximation instead. Detailed functional and Hessian expressions can be found in the supplemental material in \citep*{Tan:2017}. Here we adopt the same Laplace approximation for estimating the marginal densities for $Z$. However, as we will see in Figure \ref{run_time}, though the multiplicative prior could potentially lead to better model selection performance, the additional procedure when evaluating each individual posterior probability could be quite time consuming. In particular, the Newton-type algorithm used for obtaining the mode of the log-likelihood runs extremely slow in higher dimensions. 

\subsection{Simulation I: Illustration of Posterior Ratio Consistency} \label{sec:illustration:ratio:consistency}
\noindent
In this section, we illustrate the consistency result in Theorem \ref{thm4} and Theorem \ref{thm1} using a simulation experiment. 
Our goal is to show that the log of the posterior ratio for any ``non-true" model compared to the true model will converge to negative infinity. 
To serve this purpose, we consider $10$ different values of $p$ ranging from 
$150$ to $1500$, and choose $n = p/3$. Next, for each fixed $p$, a $p \times p$ 
lower triangular matrix with diagonal entries $1$ and off-diagonal entries $0.5$ is constructed. In particular, unlike in previous work \citep{CKG:2017} where the expected value of non-zero entries in each column of $L_0$ does not exceed 3, here we randomly chose 3\% or 5\% of the lower triangular entries of the Cholesky factor
and set them to be 0.5. The remaining entries were set to zero. 

The purpose of this setting is to show our consistency requires more relaxed sparsity assumptions on the true model compared to \citep*{CKG:2017}. We refer to this matrix as 
$L_0$. The matrix $L_0$ also reflects the true underlying DAG structure encoded in $Z_0$. Next, we generate 
$n$ i.i.d. observations from the $N(0_p, (L_0^{-1})^T L_0^{-1})$ distribution, and set the 
hyperparameters as $c = 2$, $\tau_{n,p} = \sqrt{n}$, $\lambda_1 = \lambda_2 = 0.05$, $\alpha_1 = 0.05$ for 
$i = 1,2,\ldots,p$. The above process ensures all the assumptions are satisfied. We then examine 
posterior ratio consistency under four different cases by computing the log posterior ratio of 
a ``non-true"model $Z$ and $Z_0$ as follows. 
\begin{enumerate}
	\item Case $1$: Model $Z$ is a submodel of $Z_0$ and the number of total non-zero entries of 
	$Z$ is exactly half of $Z_0$, i.e. $\sum Z = \frac 1 2 \sum Z_0$. 
	\item Case $2$: $Z_0$ is a submodel of $Z$ and the number of total non-zero entries of 
	$Z$ is exactly twice of $Z_0$, i.e. $\sum Z = 2 \sum Z_0$. 
	\item Case $3$: $Z$ is not necessarily a submodel of $Z_0$, but satisfying the number 
	of total non-zero entries in $Z$ is half the number of non-zero entries in $Z_0$. 
	\item Case $4$: $Z_0$ is not necessarily a submodel of $Z$, but the number 
	of total non-zero entries in $Z$ is twice the number of non-zero elements in $Z_0$. 
\end{enumerate}
\begin{figure}[htbp]
	\centering
	\begin{subfigure}
		{\includegraphics[width=51.5mm]{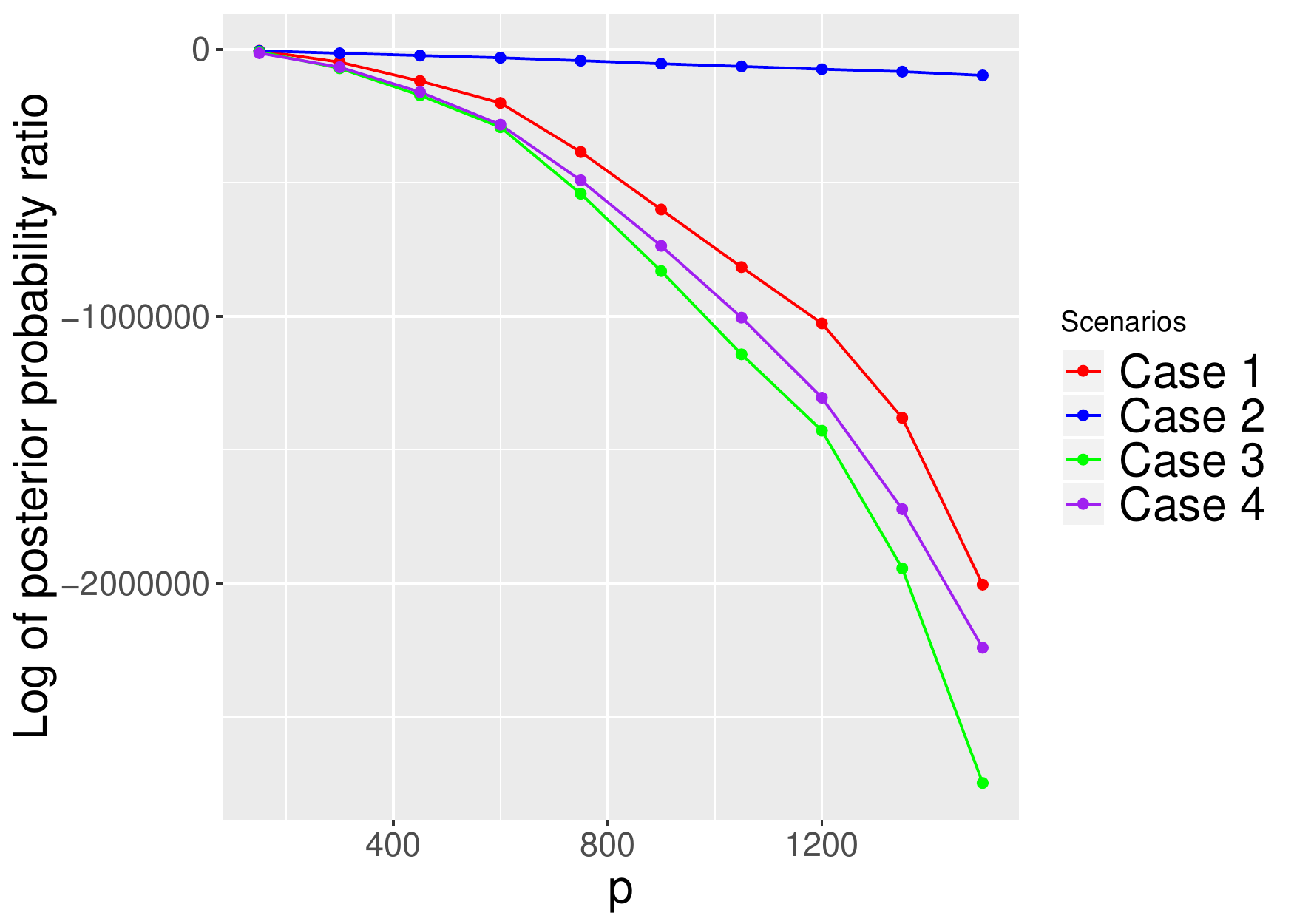}}
	\end{subfigure}
	\qquad
	\begin{subfigure}
		{\includegraphics[width=51.5mm]{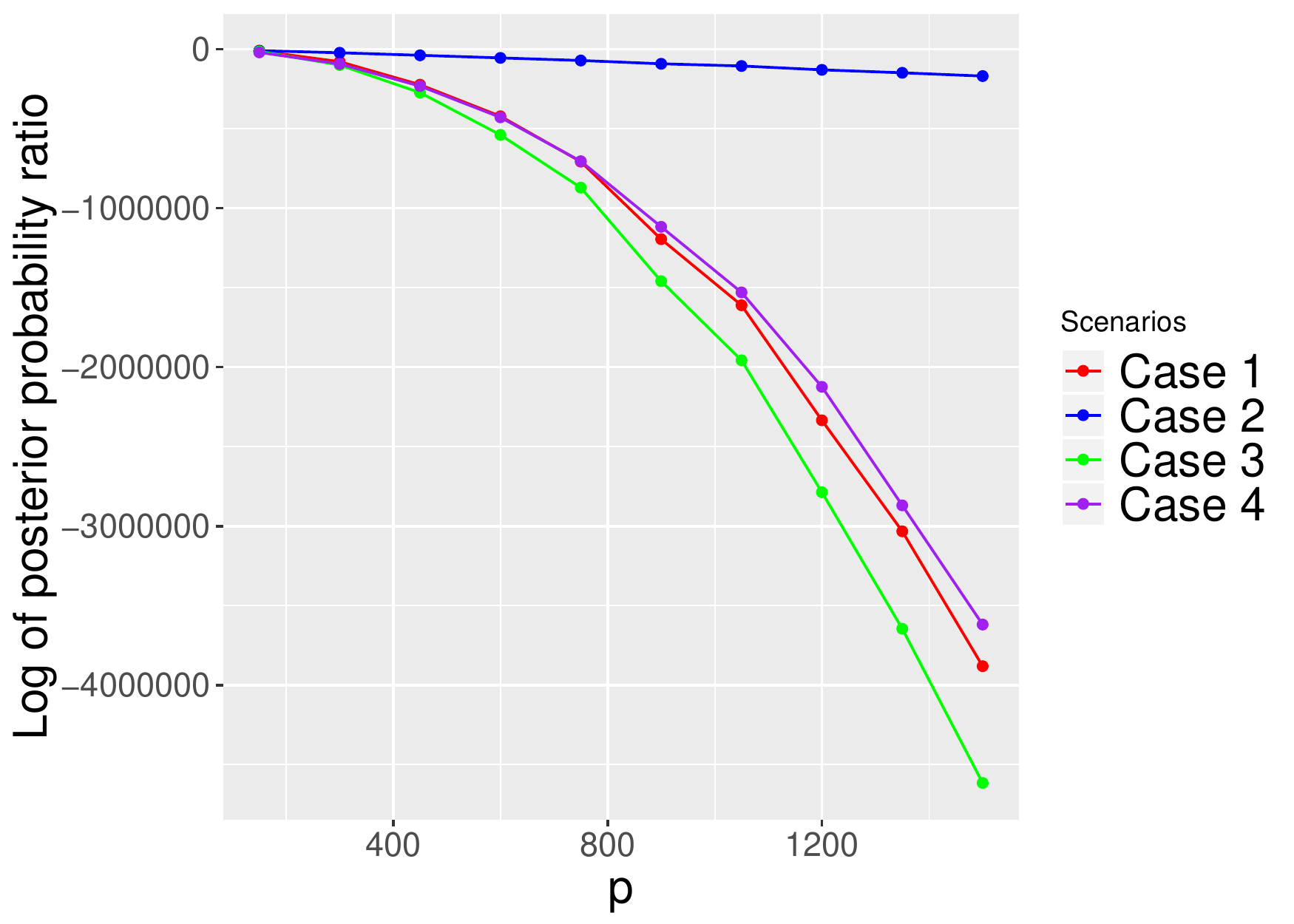}}
	\end{subfigure}

	\begin{subfigure}
		{\includegraphics[width=51.5mm]{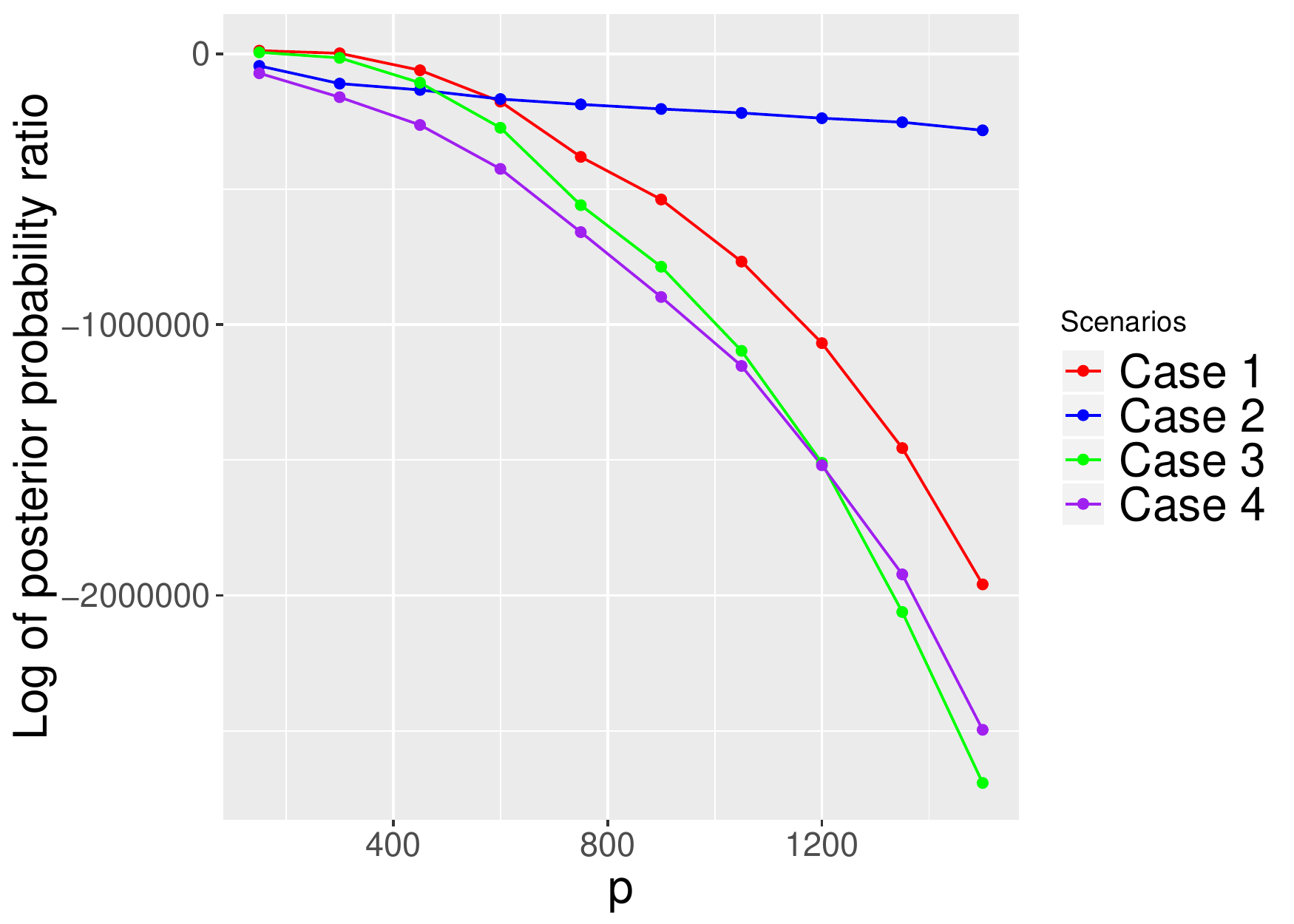}}
	\end{subfigure}
	\qquad
	\begin{subfigure}
		{\includegraphics[width=51.5mm]{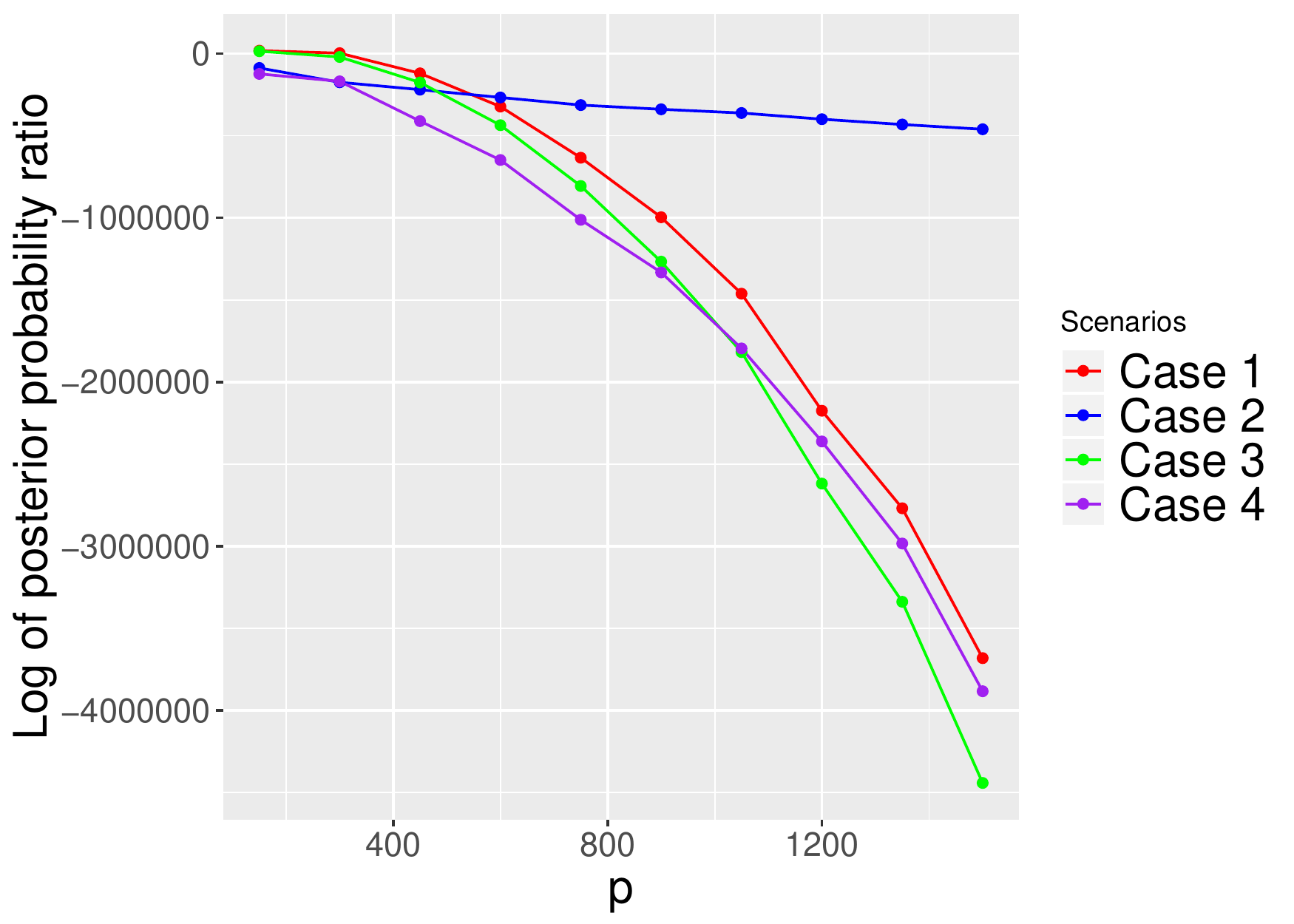}}
	\end{subfigure}
	\caption{Log of posterior probability ratio for $Z$ and $Z_0$ for various choices of the ``non-true" model $Z$. Here $Z_0$ denotes the true underlying model indicator. Left: $3\%$ sparsity; right: $5\%$ sparsity; top: beta-mixture prior; bottom: multiplicative prior.}
	\label{posterior_ratio_plot}
\end{figure}
\noindent
The log of the posterior probability ratio for various cases under two different sparsity settings and our two different priors are provided in Figure \ref{posterior_ratio_plot}. As expected the log of the posterior probability ratio decreases to large negative numbers as $n$ becomes large in all four cases and in both sparsity settings and under both sparsity priors, thereby providing a numerical illustration of 
Theorem \ref{thm4}. 

We would like to point out that in \citep*{CKG:2017}, the log of posterior ratios are almost all positive real numbers, when $p \le 1500$ and the expected value of non-zero entries in each column of $L_0$ does not exceed 3, which indicates the hierarchical model with DAG-Wishart distribution and the Erdos-Renyi type of prior over graphs only performs better with really higher dimension and much more sparse settings. In particular, this leads to one potential drawback of using the DAG-Wishart distribution coupled with the Erdos-Renyi type of prior on the Cholesky space, as in real applications, extremely high-dimensional and sparse data sets are not very commonly seen, while our spike and slab Cholesky prior with the beta-mixture or multiplicative prior is more adaptable and diverse in that aspect.
\subsection{Simulation II: Illustration of Model Selection} \label{sec:simulation:model:selection}
\noindent
In this section, we perform a simulation experiment to illustrate the potential advantages of using our Bayesian model
selection approach. We consider $5$ values of $p$ ranging from 
$300$ to $1500$, with $n = p/3$. For each fixed $p$, the Cholesky factor $L_0$ of the true concentration matrix, and the corresponding dataset, are generated by the same mechanism as in Section \ref{sec:illustration:ratio:consistency}. Then, we 
perform model selection on the Cholesky factor using the four procedures outlined below. 
\begin{enumerate}
	\item {\it Lasso-DAG with quantile based tuning}: We implement the Lasso-DAG approach in \citep{Shojaie:Michailidis:2010} by choosinf penalty parameters (separate for each variable $i$) given by $\lambda_i = 
	2 n^{-\frac 1 2}Z^*_{\frac{0.1}{2p(i-1)}}$, where $Z_q^*$ denotes the $(1-q)^{th}$ quantile of the standard normal distribution. 
	This choice is justified in \citep{Shojaie:Michailidis:2010} based on asymptotic considerations.
	
	\item {\it ESC Metropolis-Hastings algorithm}: We implement the Rao-Blackwellized Metropolis-Hastings algorithm for the ESC prior introduced in \citep*{LLL:2018} for exploring the space of the Cholesky factor. The hyperparameters and the initial states are taken as suggested in \citep{LLL:2018}. Each MCMC chain for each row of the Cholesky factor runs for 5000 iterations with a burn-in period of 2000. All the active components in $L$ with inclusion probability larger than 0.5 are selected. We would like to point out that since the Metropolis-Hastings algorithm needs to be executed for each row of $L$, the procedure could be extremely time consuming, especially in higher dimensions. 
	
	\item {\it DAG-Wishart log-score path search}: The hierarchical DAG-Wishart prior \citep{CKG:2017} also gives us the closed form to calculate the marginal posterior up to a constant. In particular,
	$$\pi({\mathscr{D}}|\bm{Y}) = \frac{\pi({\mathscr{D}})}{\pi(\bm{Y})(\sqrt{2\pi})^n} \frac{z_{\mathscr{D}}(U+nS,n+\bm{\alpha}(\mathscr D))}{z_{\mathscr{D}}(U,\bm{\alpha}(\mathscr D))},$$ where $z_{\mathscr{D}}(\cdot, \cdot)$ is the normalized constant in the DAG-Wishart distrbution and $$
	\pi (\mathscr D) = \prod_{(i,j):1 \leq i < j \leq p} q^{\gamma_{ij}} \left(1-q 
	\right)^{1-\gamma_{ij}} = \prod_{i=1}^{p-1} q^{\nu_i(\mathscr{D})} (1-q)^{p-i-
		\nu_i(\mathscr{D})}. 
	$$ with $q = e^{-\eta_nn}$, where $\eta_n = d_n (\frac{\log p_n}{n})^{\frac{1/2}{1+k/2}}$. Follow the simulation procedures in previous work \citep{CKG:2017}. We set the hyperparameters as $U = I_p$ and $\alpha_i(\mathscr D) = \nu_i(\mathscr D) + 10$ for 
	$i = 1,2,\ldots,p$ and generate candidate graphs by thresholding the modified Cholesky factor of $(S + 0.5 I)^{-1}$ ($S$ is the sample 
	covariance matrix) on a grid from 0.1 to 0.5 by 0.0001 to get a sequence of $4000$ graphs. The log posterior probabilities are computed for all candidate graphs, and the graph with the highest probability is chosen. As we discussed previously, we will see in Figure \ref{heat_2} that for the previous DAG-Wishart model, we always end up choosing the most sparse estimator, since the graph obtained at the thresholding value 0.5 always has the highest log posterior score. Hence, we observe that the choice $q = e^{-\eta_nn}$ though could guarantee the model selection consistency, makes the posterior stuck in very small size models and we are not able to detect the true model. 
	
	\item {\it Spike and slab Cholesky with beta-mixture prior/multiplicative prior}: For our Bayesian approach with spike and slab Cholesky prior and beta-mixture/multiplicative prior on the sparsity pattern of $L$, we adopt the similar procedure as DAG-Wishart log-score path search method. We construct two candidate sets as follows. 
	\begin{enumerate}
		\item All the Cholesky factors with respect to the graphs on the solution paths for Lasso-DAG, CSCS and DAG-Wishart are included in our Cholesky factor candidate set. 
		\item To increase the search 
		range, we also generate additional graphs by thresholding the modified Cholesky factor of $(S + 0.5 I)^{-1}$ ($S$ is the sample 
		covariance matrix) on a grid from 0.1 to 0.5 by 0.0001  to get a sequence of $4000$ additional Cholesky factors, and include them in the candidate set. We then search around 
		all the above candidates using Shotgun Stochastic Search Algorithm in \citep{Shin.M:2015} to generate even more candidate Cholesky factors. In particular, the authors in \citep{Shin.M:2015} claim that the simplified algorithm can significantly lessen the simulation runtime and increase the model selection performance. 
	\end{enumerate}
	The log posterior probabilities are computed for all Cholesky factors in the candidate sets using (\ref{posterior_propto}), and the one with the highest probability is 
	chosen. In Figure \ref{heat_2}, we plot the log of marginal posterior densities under the spike and slab Cholesky prior and the multiplicative/beta-mixture prior for all the Cholesky factors under different thresholding values compared with the marginal posteriors under previous DAG-Wishart model. Unlike the DAG-Wishart distribution always favor the most sparse Cholesky factor corresponding to the largest thresholding value, we observe the maximum log posterior score occurs in the middle of the curve for our proposed models, which leads to the significant improvement of the model selection results shown in Table \ref{model_selection_table1} and Table \ref{model_selection_table2}. 
\end{enumerate}
\begin{figure}[htbp]
	\centering
	\begin{subfigure}[$(n,p) = (100,300)$]
		{\label{fig:a}\includegraphics[width=35mm]{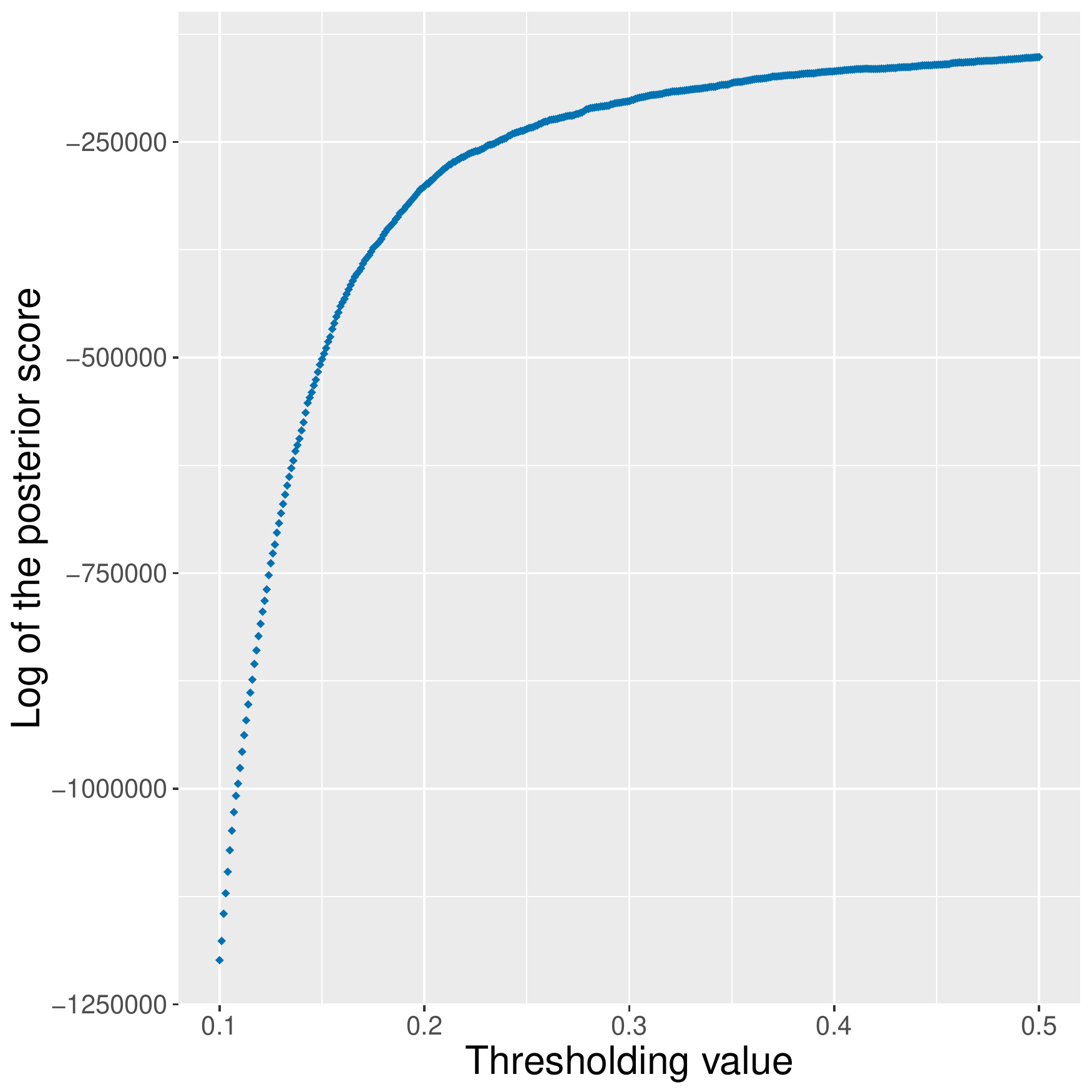}}
	\end{subfigure}
	\quad
	\begin{subfigure}[$(n,p) = (200,600)$]
		{\label{fig:b}\includegraphics[width=35mm]{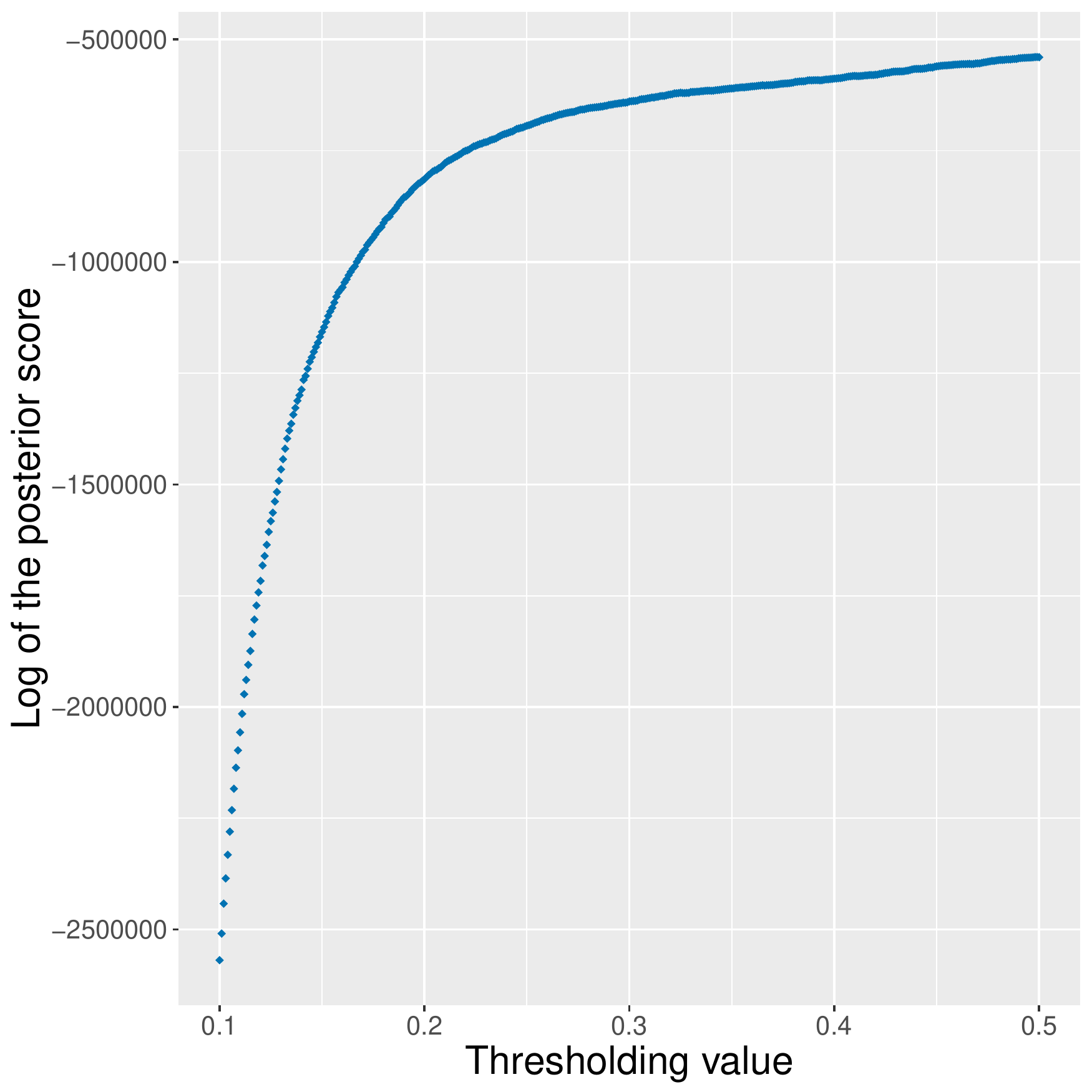}}
	\end{subfigure}
	\quad
	\begin{subfigure}[$(n,p) = (300,900)$]
		{\label{fig:c}\includegraphics[width=35mm]{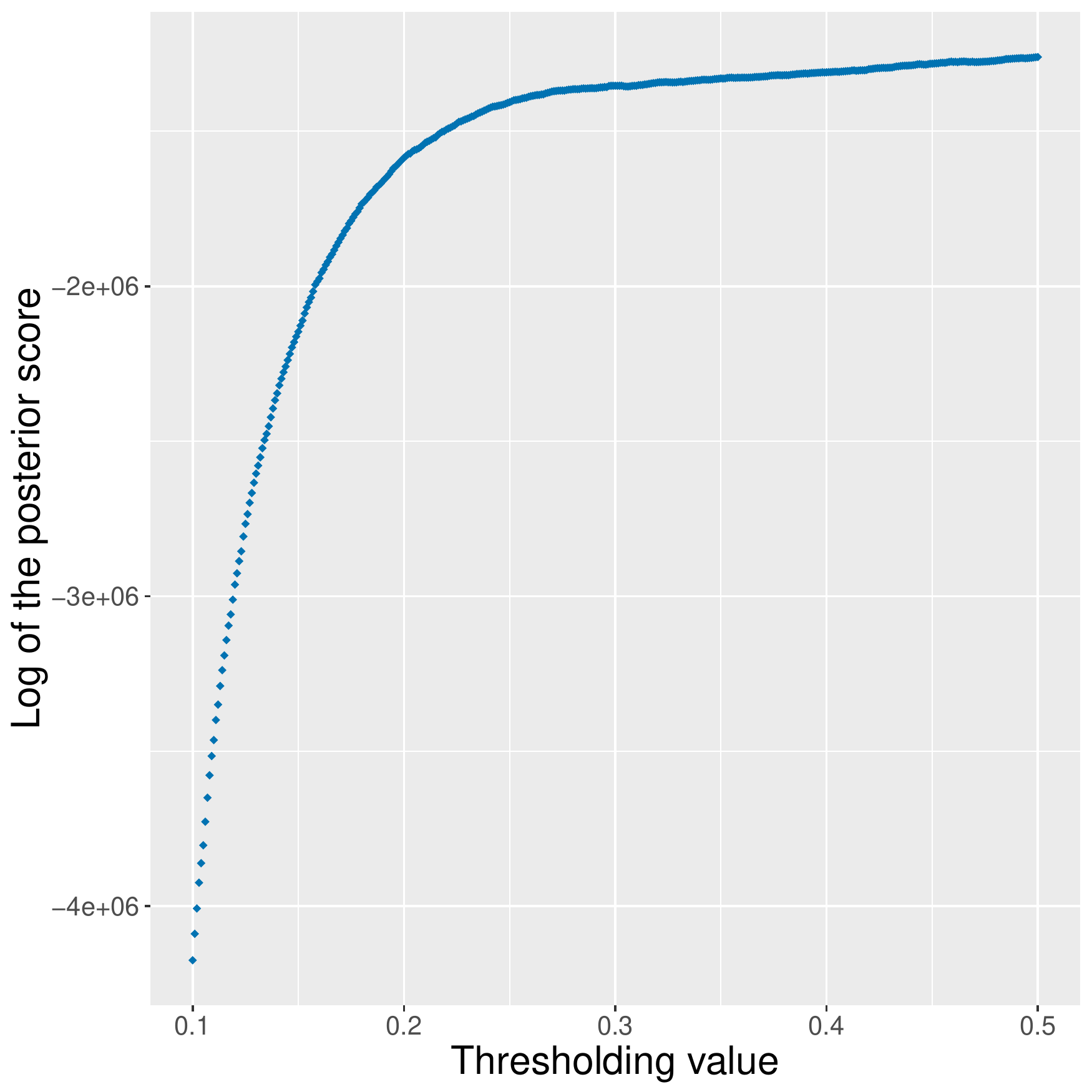}}
	\end{subfigure}%

	\begin{subfigure}[$(n,p) = (100,300)$]
		{\label{fig:d}\includegraphics[width=35mm]{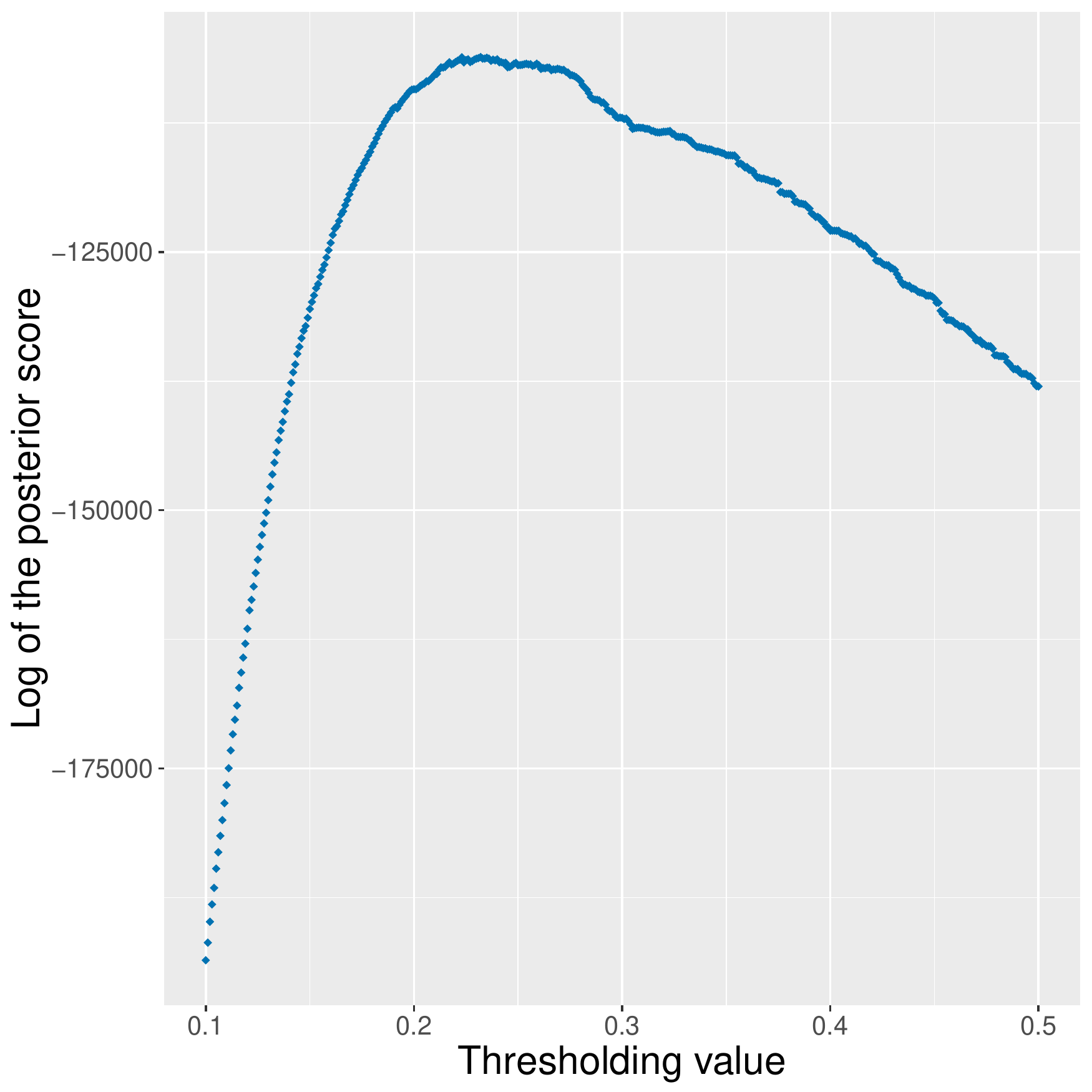}}
	\end{subfigure}
	\quad
	\begin{subfigure}[$(n,p) = (200,600)$]
		{\includegraphics[width=35mm]{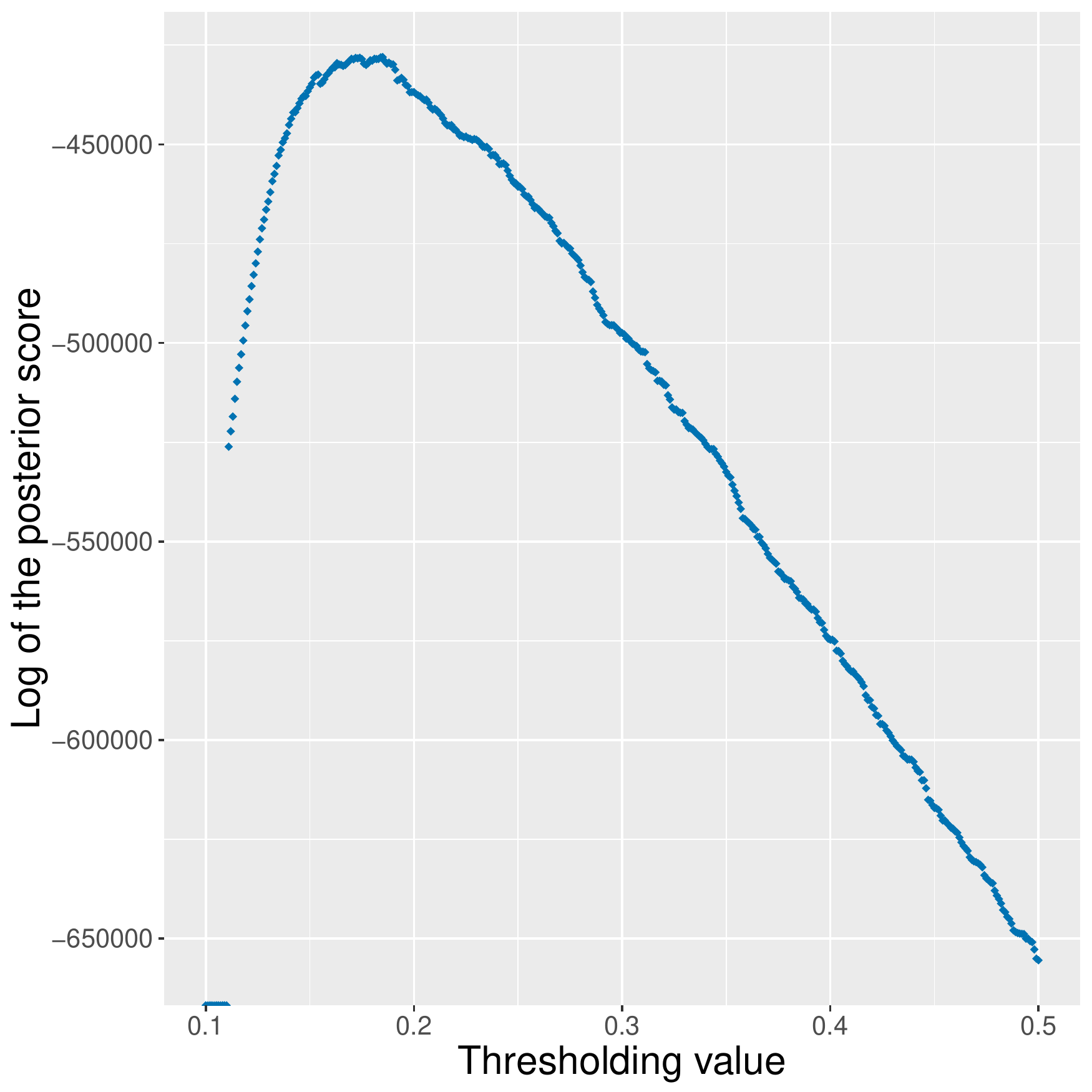}}
	\end{subfigure}%
	\quad
	\begin{subfigure}[$(n,p) = (300,900)$]
		{\includegraphics[width=35mm]{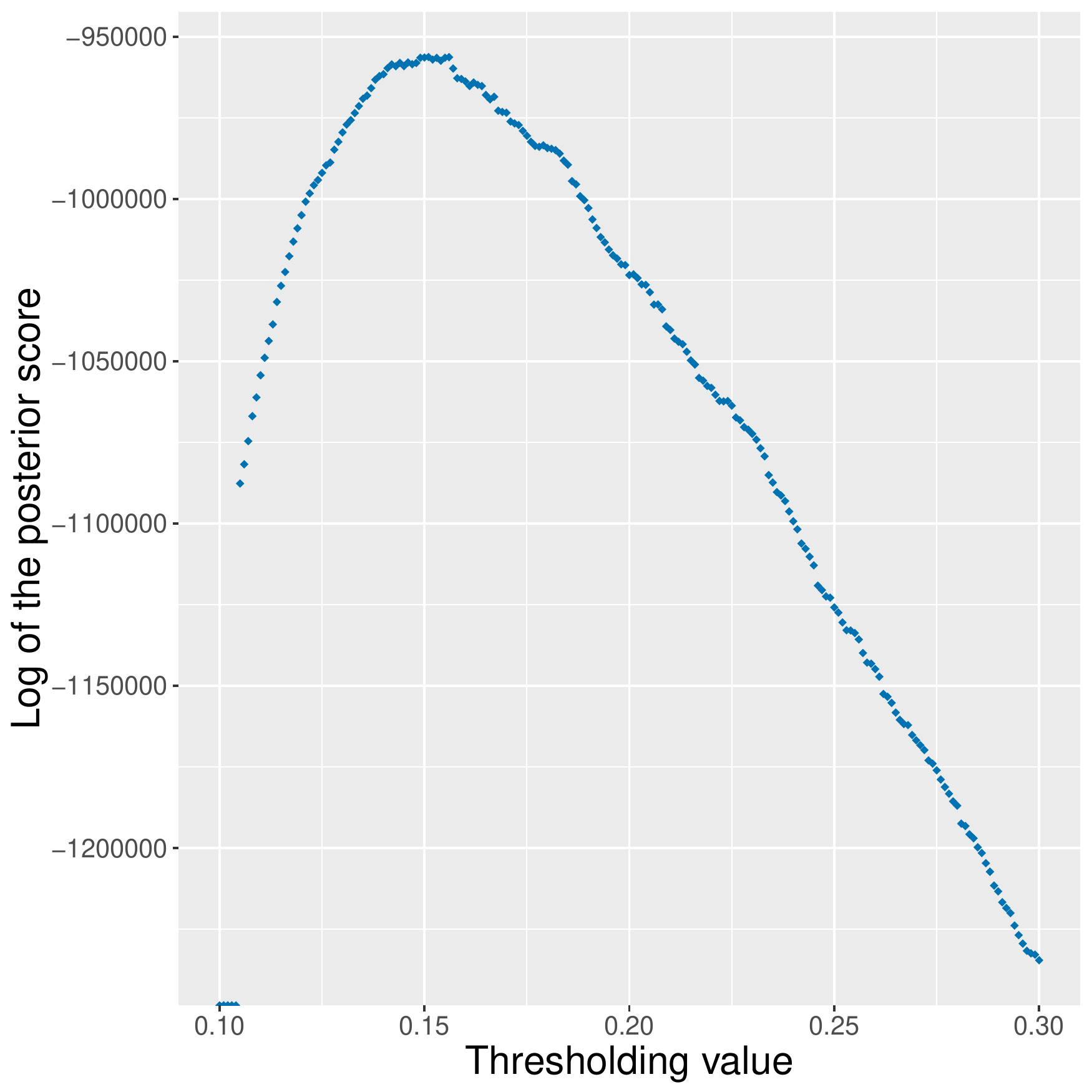}}
	\end{subfigure}%

	\begin{subfigure}[$(n,p) = (100,300)$]
	{\label{fig:d}\includegraphics[width=35mm]{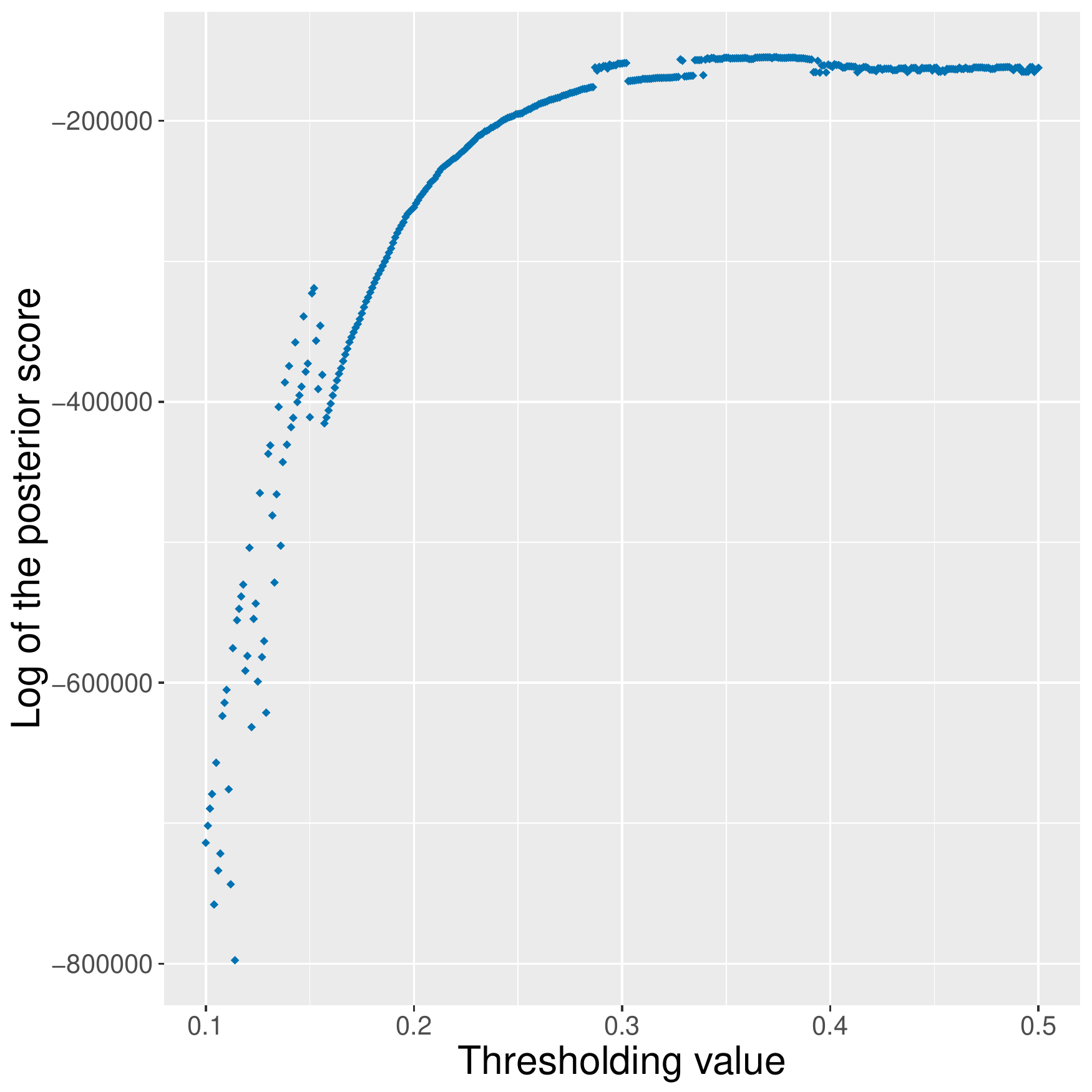}}
\end{subfigure}
\quad
\begin{subfigure}[$(n,p) = (200,600)$]
	{\includegraphics[width=35mm]{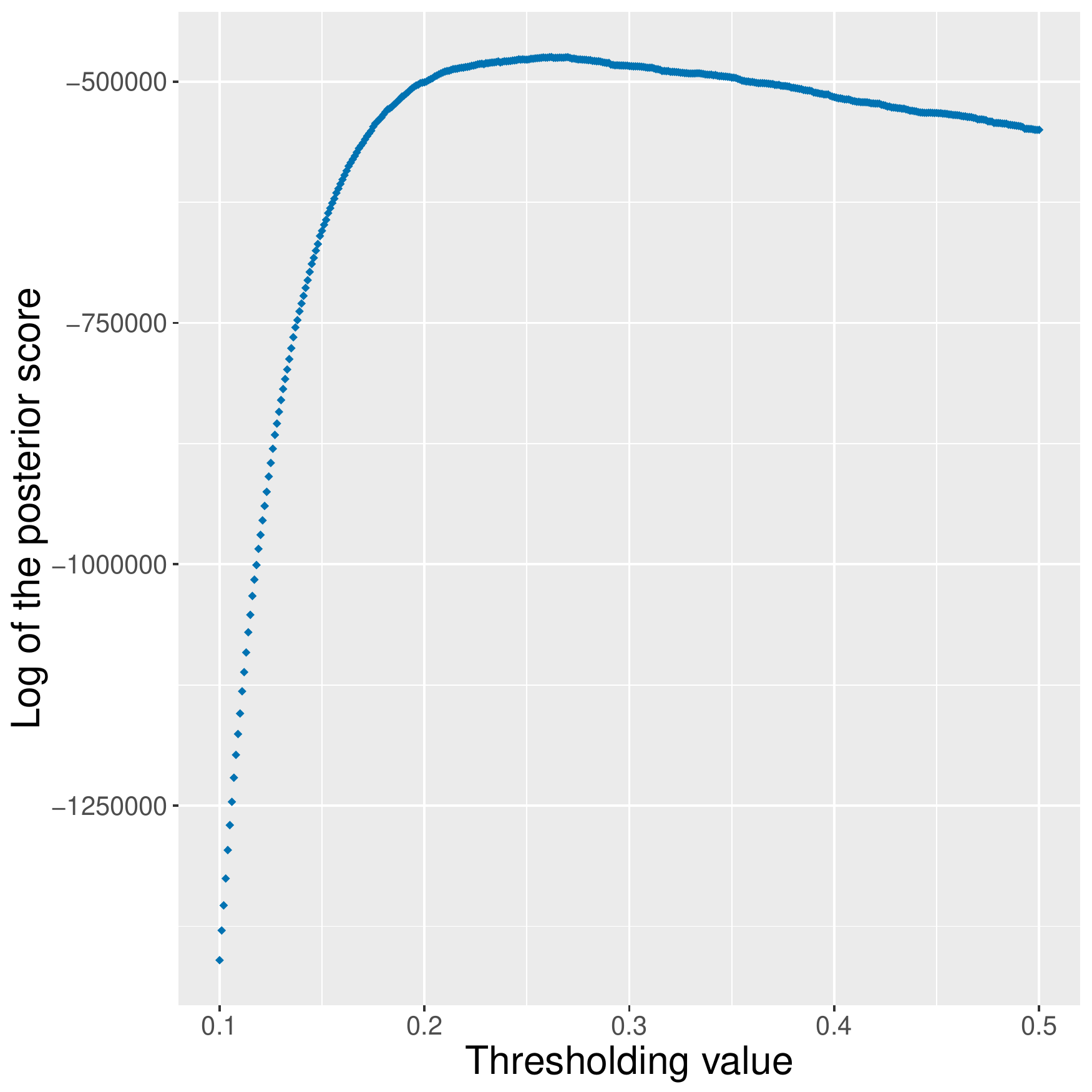}}
\end{subfigure}%
\quad
\begin{subfigure}[$(n,p) = (300,900)$]
	{\includegraphics[width=35mm]{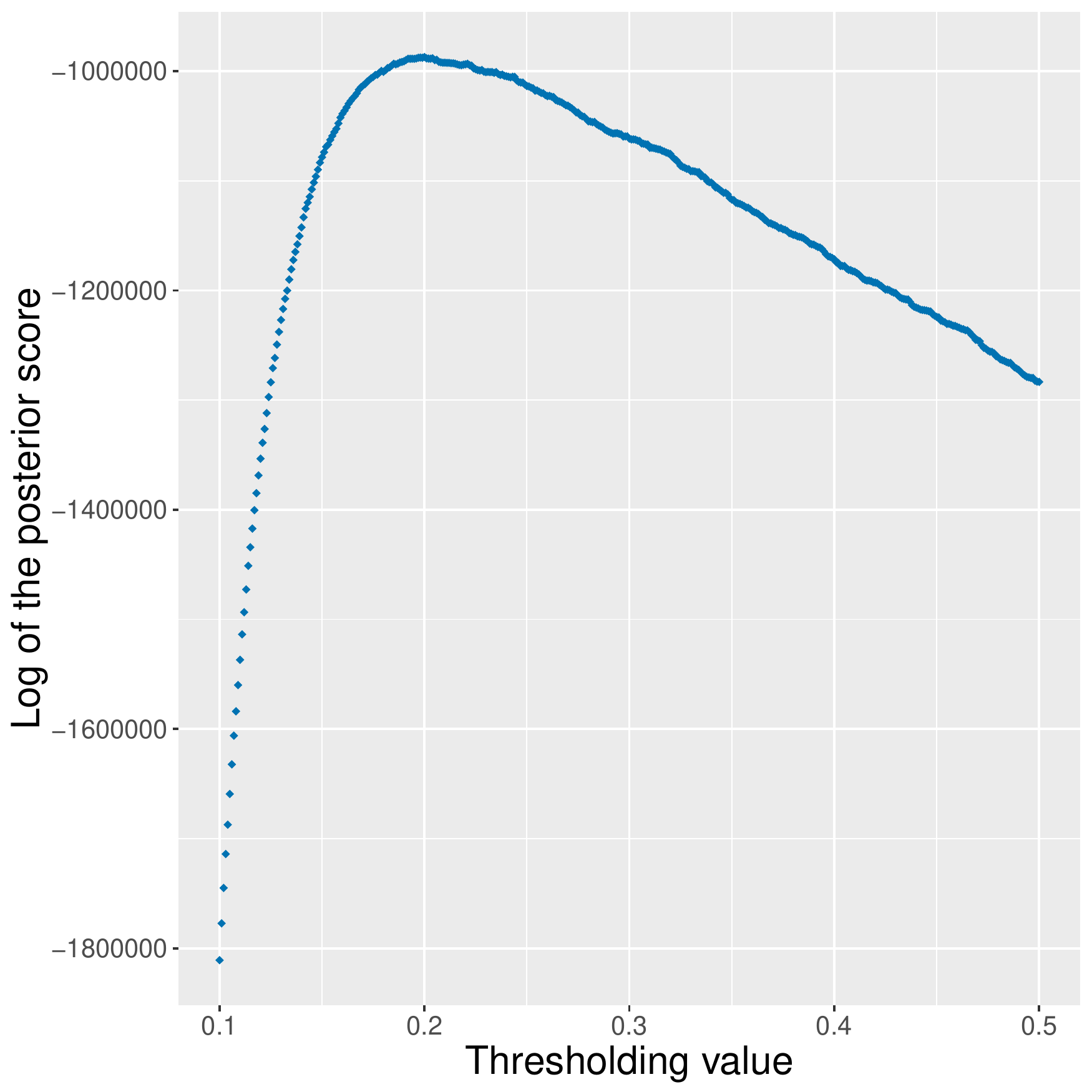}}
\end{subfigure}
	\caption{Log of posterior vs thresholding values under different priors. Top: DAG-Wishart; middle: Spike and slab Cholesky with beta-mixture prior; bottom: Spike and slab Cholesky with multiplicative prior.}
	\label{heat_2}
\end{figure}
\noindent
The model selection performance of these four methods is then compared using several different measures of structure such as 
positive predictive value, true positive rate and mathews correlation coefficient (average over $20$ independent repetitions). 
Positive Predictive Value (PPV) represents the proportion of true non-zero entries among all the entries detected by the given procedure, 
True Positive Rate (TPR) measures the proportion of true non-zero entries detected by the given procedure among all the non-zero entries 
from the true model. PPV and TPR are defined as $$\mbox{PPV} = \frac{\text{TP}}{\text{TP + FP}}, \quad \mbox{TPR} = \frac{\text{TP}}{\text{TP + FN}}.$$Mathews correlation Coefficient (MCC) is commonly used to assess the performance of binary classification methods and is defined as $$ \text{MCC} = \frac{\text{TP} \times \text{TN} - \text{FP} \times \text{FN}} {\sqrt{\text{(FP + TN)} \times \text{(TP+FN)} \times\text{(TN+FP)}\text{(TN+FN)}}},$$ where TP, TN, FP and FN correspond to true positive, true negative, false positive and false negative, respectively. Note that the value of MCC ranges from -1 to 1 with larger values corresponding to better fits (-1 and 1 represent worst and best fits, respectively). Similar to MCC, one would also like the PPV and TPR values to be as close to $1$ as possible. 
The results are provided in Table \ref{model_selection_table1} and Table \ref{model_selection_table2}, corresponding to different true sparsity levels. In Figure \ref{heat_3}, we draw the heatmap comparison between the true $L_0$ and estimated $L$ using our Bayesian spike and slab Cholesky approach under two different sparsity levels when $(n,p) = (100,300)$.
\begin{table}[htbp]
	\small
	\centering
	\scalebox{0.65}{
		\begin{tabular}{ccccccccccccccccc}
			\hline
			\multicolumn{2}{c}{\multirow{2}{*}{}}     & \multicolumn{3}{c}{Lasso-DAG}             &\multicolumn{3}{c}{ESC}             & \multicolumn{3}{c}{DAG-W}      &\multicolumn{3}{c}{SSC-B}         &\multicolumn{3}{c}{SSC-M}                \\
			$p$                   & $n$                & PPV           & TPR          & MCC          & PPV        & TPR        & MCC        & PPV       & TPR       & MCC     & PPV        & TPR        & MCC & PPV        & TPR        & MCC              \\ \hline
			300  & 100 & 0.2  & 0.2  & 0.19 &0.17  &0.43 &0.26  & 0.99 & 0.3  & 0.55 & 0.73 &0.85 &0.78  & 0.98 & 0.69 & 0.82 \\
			600  & 200 & 0.15 & 0.18 & 0.16 &0.15 &0.52 &0.27 & 0.99 & 0.31 & 0.55 &0.69 &0.92 &0.79 & 0.89 & 0.82  & 0.85  \\
			900  & 300 & 0.15 & 0.20  & 0.17 &0.12  &0.54  &0.24 & 1    & 0.33 & 0.57 &0.62 &0.93 &0.76 & 0.83 & 0.87 & 0.84 \\
			1200 & 400 & 0.11 & 0.17 & 0.14 &0.08  &0.52  &0.21  & 1    & 0.33 & 0.58 &0.61 &0.94 &0.76 & 0.78 & 0.90 & 0.84 \\
			1500 & 500 & 0.12 & 0.21 & 0.16 &0.06  &0.45 &0.20  & 1    & 0.33 & 0.58 &0.56 &0.96 &0.73 & 0.71 & 0.93 & 0.81 \\ \hline
		\end{tabular}
	}
	\caption{Model selection performance table with sparsity 3\%. DAG-W: DAG-Wishart log-score path search; SSC-B: Spike and slab Cholesky with beta-mixture prior; SSC-M: Spike and slab Cholesky with multiplicative  prior.}
	\label{model_selection_table1}
\end{table}

\begin{table}[htbp]
	\small
	\centering
	\scalebox{0.65}{
		\begin{tabular}{ccccccccccccccccc}
			\hline
		\multicolumn{2}{c}{\multirow{2}{*}{}}     & \multicolumn{3}{c}{Lasso-DAG}             &\multicolumn{3}{c}{ESC}             & \multicolumn{3}{c}{DAG-W}      &\multicolumn{3}{c}{SSC-B}         &\multicolumn{3}{c}{SSC-M}                \\
			$p$                   & $n$                 & PPV        & TPR        & MCC       & PPV           & TPR          & MCC          & PPV        & TPR        & MCC        & PPV       & TPR       & MCC       & PPV        & TPR        & MCC                   \\ \hline
			300 & 100 & 0.19 & 0.1  & 0.13 &0.14 &0.33 &0.19 & 0.99 & 0.3  & 0.54 &0.66 &0.81 &0.73 & 0.99 & 0.43 & 0.65 \\
			450 & 150 & 0.12 & 0.09 & 0.1  &0.11  &0.35 &0.18 & 1    & 0.29 & 0.53 &0.63 &0.86 &0.73 & 0.93 & 0.72 & 0.82 \\
			600 & 200 & 0.12 & 0.09 & 0.1  &0.10 &0.38 &0.18  & 1    & 0.3  & 0.55 &0.57 &0.89 &0.71  & 0.87 & 0.80 & 0.83 \\
			750 & 250 & 0.09 & 0.08 & 0.08 &0.08 &0.36 &0.16  & 1    & 0.31 & 0.55 &0.59 &0.9 &0.72  & 0.80 & 0.86  & 0.83 \\
			900 & 300 & 0.11 & 0.09 & 0.09 &0.05 &0.31 &0.13  & 0.99 & 0.31 & 0.55 &0.56 &0.92 &0.72  & 0.77 & 0.87 & 0.82 \\ \hline
		\end{tabular}
	}
	\caption{Model selection performance table with sparsity 5\%}
	\label{model_selection_table2}
\end{table}
It is clear that our hierarchical fully Bayesian approach with beta-mixture prior and multiplicative prior outperforms the penalized likelihood approaches, the Bayesian DAG-Wishart and ESC approach based on almost all measures. 
The PPV values for our Bayesian spike and slab Cholesky approach 
are all above $0.55$, while the ones for the penalized likelihood approach and ESC are below $0.2$. Though the PPV for the DAG-Wishart approach is almost 1, it is actually a consequence of the maximized log score occurring at the most sparse model. Hence, The precision (PPV) for the DAG-Wishart method is rather high, as the resulting $L$ is extremely sparse and all the remaining non-zero entries are the true elements in $L_0$. The TPR values for the proposed approaches are almost all beyond $0.70$, while the ones for the penalized likelihood approaches are all below $0.27$. Now again under this measure, as a result of the final sparse estimator, DAG-Wishart Bayesian approach performs very poorly compared to the spike and slab approach with beta-mixture/multiplicative prior. For the most comprehensive measure of MCC, our fully Bayesian approach outperforms all the other three methods under all the cases of $(n,p)$ and two different sparsity levels.
	\begin{figure} [h]
		\centering
	\includegraphics[width=100mm]{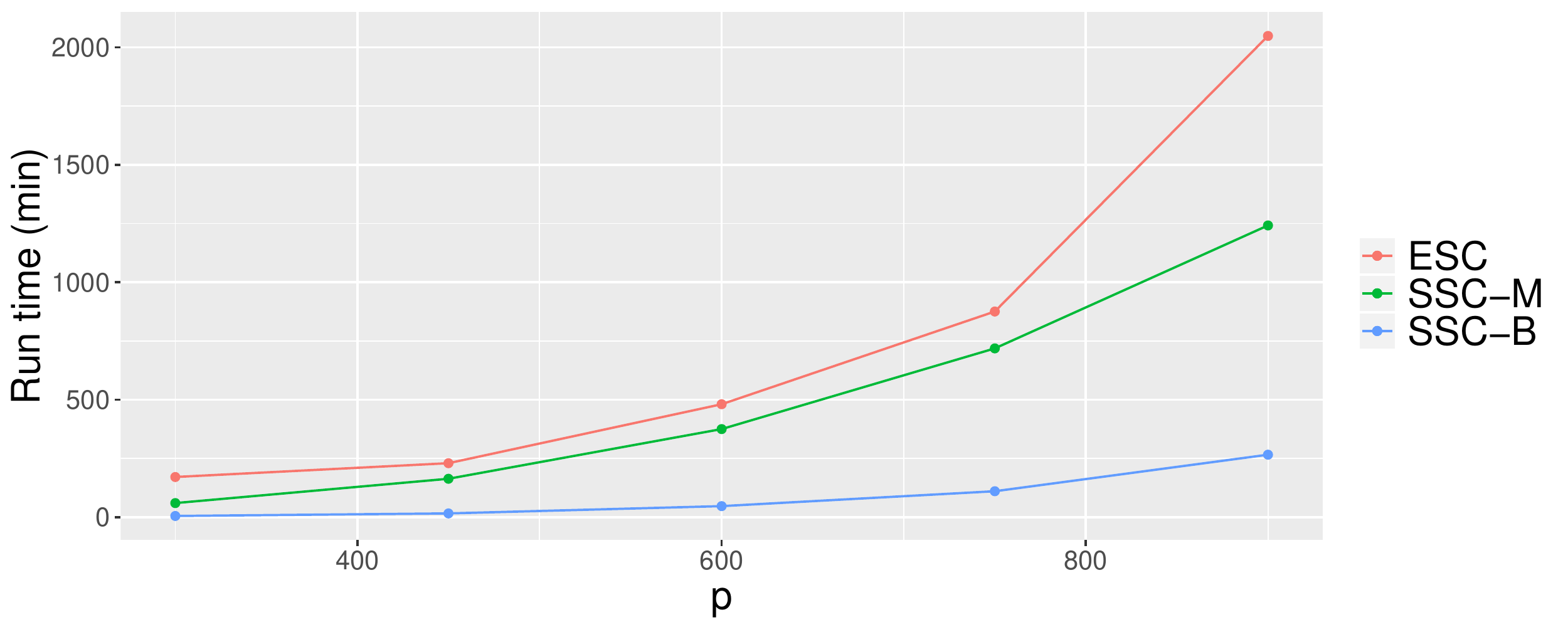}
	\caption{Run time comparison.}
	\label{run_time}
\end{figure}

It is also meaningful to compare the computational runtime between different methods. In Figure \ref{run_time}, we plot the run time comparison between our spike and slab Cholesky with beta-mixture prior/multiplicative prior and ESC. Since the marginal posterior is available in closed form (up to a constant) for the SSC with beta-mixture prior, we can see that the run time for SSC-B via thresholding coupled with stochastic search is significantly lessened compared to the MCMC approach. The computational cost of ESC is extremely expensive in the sense that it requires not only additional run time, but also larger memory (more than 30GB when $p > 900$). On the other hand, for the multiplicative prior, though the model selection performance is almost the best among all the competitors, with the extra step of the Laplace approximation for calculating each posterior probability, the computational burden is quite extensive as $p$ increases.
\begin{figure}[htbp]
	\centering
	\begin{subfigure}[True $L_0$ with sparsity $3\%$]
		{\includegraphics[width=45mm]{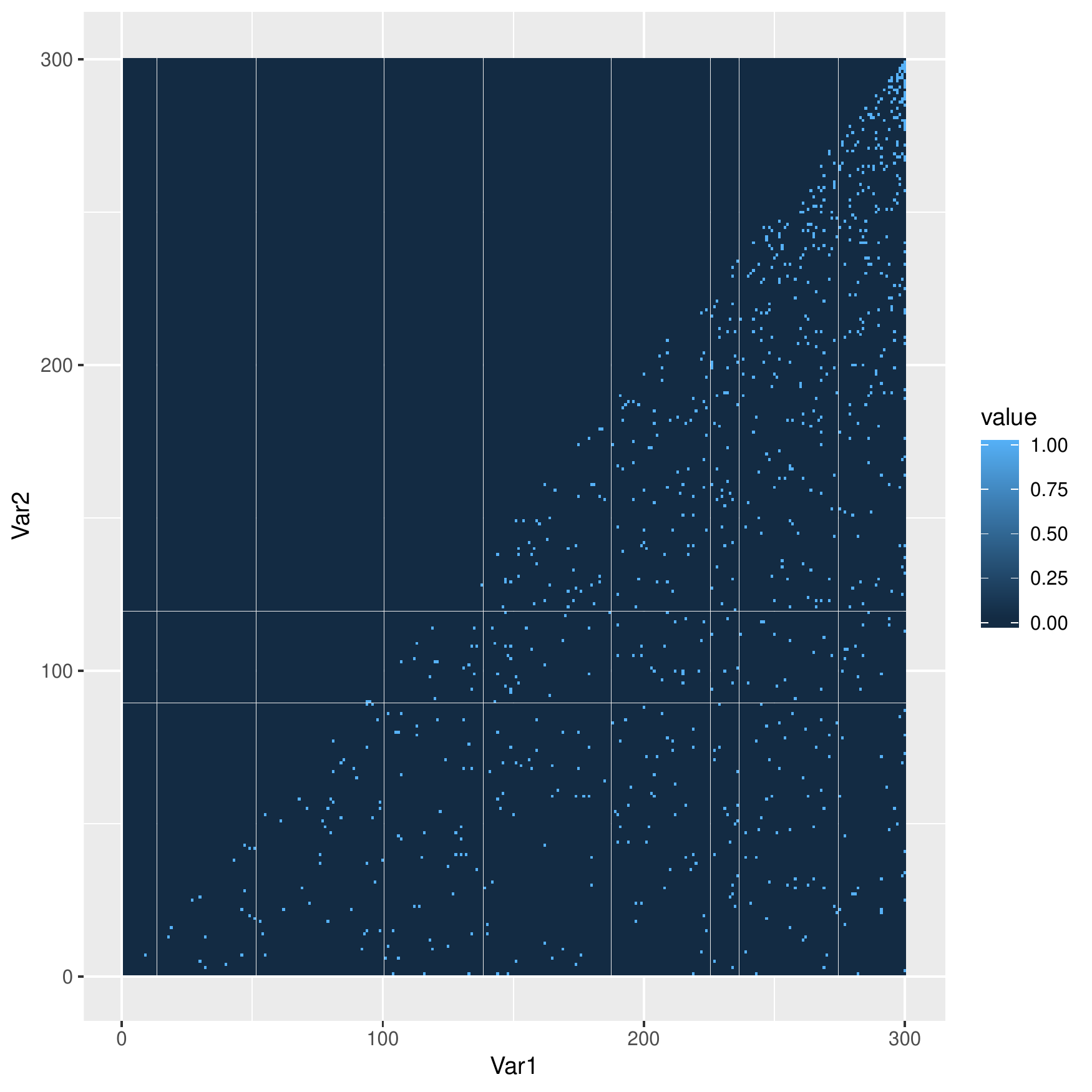}}
	\end{subfigure}
	\qquad
	\begin{subfigure}[Estimated $L$]
		{\includegraphics[width=45mm]{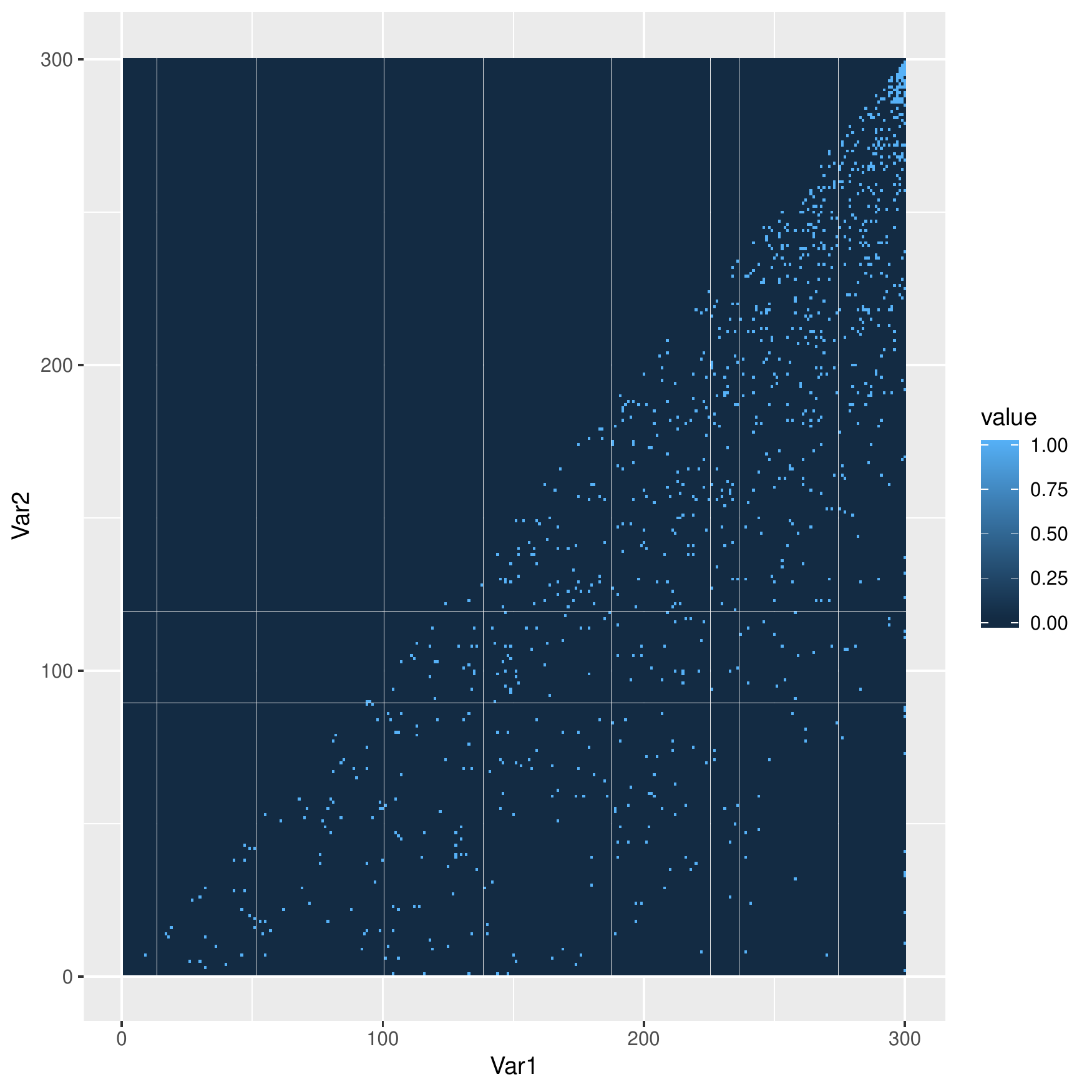}}
	\end{subfigure}%

	\begin{subfigure}[True $L_0$ with sparsity $5\%$]
		{\includegraphics[width=45mm]{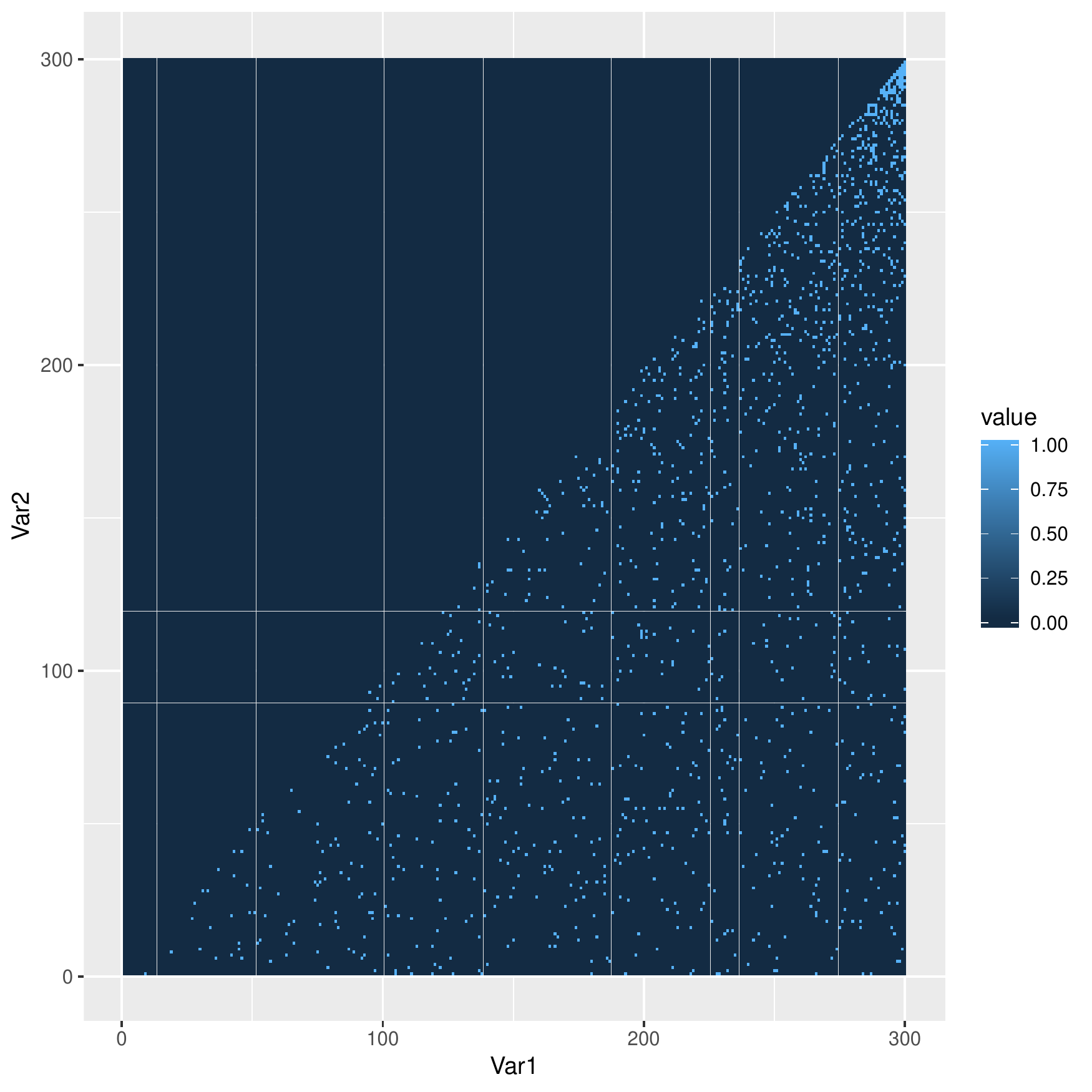}}
	\end{subfigure}
	\qquad
	\begin{subfigure}[Estimated $L$]
		{\includegraphics[width=45mm]{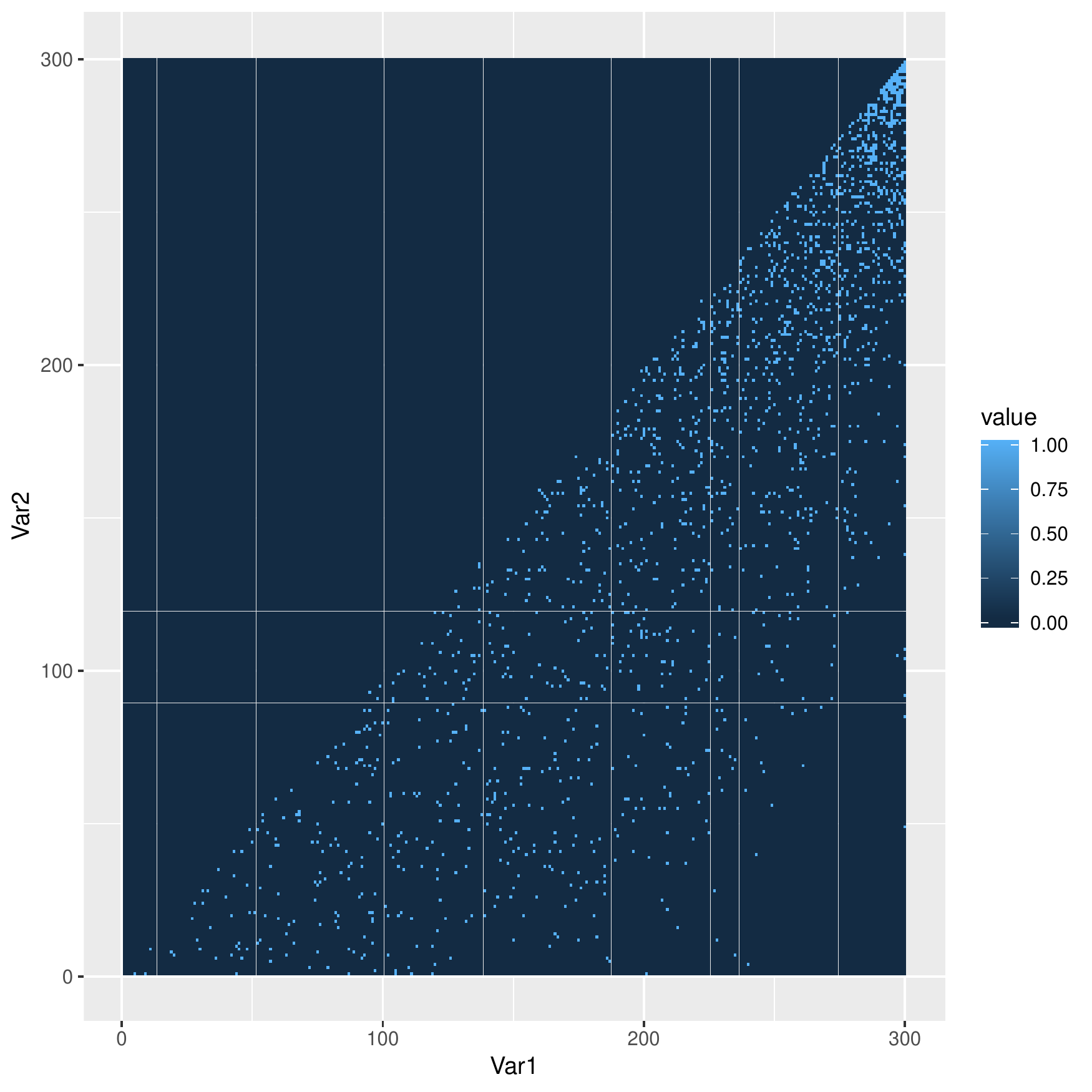}}
	\end{subfigure}
	\caption{Heatmap comparison with $(n, p) = (100, 300)$}
	\label{heat_3}
\end{figure}

Overall, this experiment illustrates that the proposed hierarchical fully Bayesian approach with our spike and slab Cholesky prior and the beta-mixture prior can be used for a broader yet computationally feasible model search, while our spike and slab Cholesky prior with the multiplicative prior though more computationally expensive, can lead to a much more significant improvement in model selection performance for estimating the sparsity pattern of the Cholesky factor and the underlying DAG.

\section{Proofs} \label{sec:modelselectionproofs}
In this section, we take on the task of proving our main results presented in Theorems \ref{thm4} to \ref{thm3}.
\subsection{Proof of Theorem \ref{thm4}} \label{sec:proof_thm4}
The proof of Theorem \ref{thm4} will be broken into several steps. We begin our strong selection consistency proof by first proving the Lemma \ref{graph_ratio_lemma} and Lemma \ref{newlemma1} which give the upper bound for the prior ratio between any ``non-true" model $Z$ and the true model $Z_0$.
\begin{proof} [Proof of Lemma \ref{graph_ratio_lemma}]
	First note that following from model (\ref{model5}) and (\ref{model6}), we have 
	\begin{align} \label{marginal_Z_upper}
	\pi (Z) =& \int \prod_{j = 1}^{p}\pi(\omega_j) \pi(Z | \omega_1, \ldots, \omega_{p})d\omega_1\ldots d\omega_{p} \nonumber\\
=& \int \prod_{1 \le j < k \le p} (\omega_k\omega_j)^{Z_{kj}}(1-\omega_k\omega_j)^{1-Z_{kj}} \prod_{j = 1}^{p}\pi(\omega_j) d\omega_1\ldots d\omega_{p}  \nonumber \\
\le & \int \prod_{1 \le j < k \le p} (\omega_k\omega_j)^{Z_{kj}} \prod_{j = 1}^{p}\pi(\omega_j) d\omega_1\ldots d\omega_{p} \nonumber\\
\le & \prod_{j = 1}^p \int \omega_j^{|Z_j|} \omega_j^{\alpha_1-1} (1-\omega_j)^{\alpha_{2}-1} \frac{\Gamma(\alpha_1+\alpha_2)}{\Gamma(\alpha_1)\Gamma(\alpha_{2})}d\omega_j \nonumber\\
\le&  \prod_{j = 1}^p\frac{\Gamma(\alpha_1 + \alpha_2)\Gamma(\alpha_1 + |{Z}_j|)}{\Gamma(\alpha_1 + \alpha_2 + |{Z}_j|)\Gamma(\alpha_1)}.
	\end{align}
Denote $A_j= \left\{\omega_j: \omega_j < \frac{\alpha_1}{\max\left\{p^{\frac c 2}, d^{\frac c {c-2}}\right\}}\right\}$. Note that on $A_j$, $1 - \omega_i\omega_j > 1 - \frac{\alpha_1^2}{\max\left\{p^{c}, d^{\frac {2c} {c-2}}\right\}}$. Hence, by $c > 2$,
\begin{align*}
\prod_{1 \le j < k \le p}(1-\omega_k\omega_j)^{1-Z_{kj}} \ge& \left(1 - \frac{\alpha_1^2}{\max\left\{p^{c}, d^{\frac {2c} {c-2}}\right\}}\right)^{p^2} \\
\ge & \left(1- \frac{\alpha_1^2}{p^2}\right)^{p^2} \\
\ge & e^{-2\alpha_1^2}, \quad \mbox{for } p \ge \sqrt{2} \alpha_1.
\end{align*}
The last inequality follows from $\frac{\log(1-x)}{x} \ge -2$, for $0 \le x < \frac 1 2$. Hence, for $p \ge \sqrt{2}\alpha_1$, we have
\begin{align} \label{lower_bound_Z_0}
\pi(Z_0) =& \int \pi(Z_0 | \omega_1, \ldots, \omega_{p}) \prod_{j = 1}^p\pi(\omega_j)d\omega_1 \ldots d\omega_{p}\nonumber\\
\ge& e^{-2\alpha_1^2} \prod_{j = 1}^p \int_{A_j} \omega_j^{|{Z_0}_j| + \alpha_1 - 1}(1 - \omega_j)^{\alpha_{2}-1}\frac{\Gamma(\alpha_1+\alpha_2)}{\Gamma(\alpha_1)\Gamma(\alpha_{2})}d\omega_j \nonumber\\
\ge & e^{-2\alpha_1^2} \prod_{j = 1}^p\frac{\Gamma(\alpha_1 + \alpha_2)\Gamma(\alpha_1 + |{Z_0}_j|)}{\Gamma(\alpha_1 + \alpha_2 + |{Z_0}_j|)\Gamma(\alpha_1)}P\left(B_j < \frac{\alpha_1}{\max\left\{p^{\frac c 2}, d^{\frac c {c-2}}\right\}}\right),
\end{align}
where $B_j \sim \mbox{Beta} (\alpha_1 + |{Z_0}_j|, \alpha_2)$. By Markov's inequality and $\alpha_{2} \sim \max\left\{p^c, d^{\frac{2c}{c-2}}\right\}$, where $c> 2$, we have
\begin{align} \label{markov_B_i} 
P\left(B_j < \frac{\alpha_1}{\max\left\{p^{\frac c 2}, d^{\frac c {c-2}}\right\}}\right)
\ge& 1 - \frac{E(B_j)}{\frac{\alpha_1}{\max\left\{p^{\frac c 2}, d^{\frac c {c-2}}\right\}}} \nonumber\\
\ge& 1 - \frac{\alpha_1 + |{Z_0}_j|}{\alpha_1\left(\max\left\{p^{\frac c 2}, d^{\frac c {c-2}}\right\}\right)} \nonumber\\
\ge& e^{-\frac{2(\alpha_1 + |{Z_0}_j|)}{\alpha_1\max\left\{p^{\frac c 2}, d^{\frac c {c-2}}\right\}}},
\end{align}
for $p \ge 4+\frac{4}{\alpha_1}.$ The last inequality follows from 
\begin{align*}
\frac{\alpha_1 + |{Z_0}_j|}{\alpha_1\left(\max\left\{p^{\frac c 2}, d^{\frac c {c-2}}\right\}\right)}  \le& \frac{\alpha_1 + d}{\alpha_1\left(\max\left\{p^{\frac c 2}, d^{\frac c {c-2}}\right\}\right)}  \nonumber\\
\le& \frac 1 p +  \frac{d}{\alpha_1\left(\max\left\{p^{\frac c 2}, d^{\frac c {c-2}}\right\}\right)} \nonumber\\
\le & \frac 1 p + \frac{d}{\alpha_1 p(d^{2/(c-2)})^{c/2-1}} \nonumber\\
\le& \frac 1 p + \frac 1 {\alpha_1 p} \le \frac 1 2,
\end{align*}
for $p \ge 4 + \frac 4 {\alpha_1}$. It then follows by (\ref{lower_bound_Z_0}) and (\ref{markov_B_i}) that
\begin{align} \label{lower_Z_0}
\pi(Z_0) \ge& e^{-2\alpha_1^2} e^{-\frac{2p(\alpha_1 + d)}{\alpha_1\max\left\{p^{\frac c 2}, d^{\frac c {c-2}}\right\}}} \prod_{j = 1}^p\frac{\Gamma(\alpha_1 + \alpha_2)\Gamma(\alpha_1 + |{Z_0}_j|)}{\Gamma(\alpha_1 + \alpha_2 + |{Z_0}_j|)\Gamma(\alpha_1)} \nonumber \\
\ge& e^{-2\alpha_1^2 - 2\alpha_1- \frac 2 {\alpha_{2}}}\prod_{j = 1}^p\frac{\Gamma(\alpha_1 + \alpha_2)\Gamma(\alpha_1 + |{Z_0}_j|)}{\Gamma(\alpha_1 + \alpha_2 + |{Z_0}_j|)\Gamma(\alpha_1)},
\end{align}
for $p \ge 4+\frac{4}{\alpha_1} + 2\sqrt{\alpha_1}$. \\
Therefore, by (\ref{marginal_Z_upper}) and (\ref{lower_Z_0}) that
\begin{align} \label{graph_ratio}
\frac{\pi(Z)}{\pi(Z_0)} \le e^{2\alpha_1^2  + 2\alpha_1 + \frac 2 {\alpha_{2}}} \prod_{j=1}^{p}\frac{B(\alpha_1+ |Z_j|, \alpha_2)}{B(\alpha_1+ |{Z_0}_j|, \alpha_2)},
\end{align}
for $p \ge 4+\frac{4}{\alpha_1} + 2\sqrt{\alpha_1}$.
\end{proof}
\noindent
Next, we prove the result on the upper bound for the marginal posterior ratio that is Lemma \ref{newlemma1}.
\begin{proof}[Proof of Lemma \ref{newlemma1}]
		Next, it follows model (\ref{model1}) to (\ref{model4}) that
	\begin{align} \label{posterior1}
	\begin{split}
	&\pi(Z | \bm Y)\\
	=& \int \frac{\pi(\bm{Y}| Z,(L,D)) \pi \left(L|D, Z\right) \pi(Z) \pi(D)} {\pi(\bm{Y})} dLdD \\
	=& \frac{\pi(Z)}{\pi(\bm{Y})}\int \pi(\bm{Y}| Z,(L,D)) \pi \left(L|D, Z\right) \pi(D) dLdD. 
	\end{split}
	\end{align}
	Note that
	\begin{align}\label{posterior2}
	&\pi(\bm{Y}| Z,(L,D)) \pi \left(L|D, Z\right) \pi(D) \nonumber\\
	= &\prod_{i = 1}^{n}\left((2\pi)^{- \frac p 2}\prod_{j = 1}^{p} d_j^{-\frac12}\exp\left\{-\frac 1 2 \bm Y_i^T(LD^{-1}L^T)\bm Y_i\right\}\right)\nonumber \\ 
	& \times \prod_{j = 1}^{p-1}\prod_{k=j+1}^p\left(N\left(\bm 0, \tau^2 d_j\right) + (1-Z_{kj}) \delta_0(L_{kj})\right) \times \prod_{j=1}^p \pi(d_j)\nonumber \\ 
	\propto &\prod_{j = 1}^{p-1}\left\{d_j^{-\frac n 2 } \exp\left\{-\frac{n \left(L_{Z.j}^\ge\right)^TS_{Z}^{\ge j}L_{Z.j}^\ge}{2d_j}\right\} \right\}d_p^{-\frac n 2}\exp\left\{-\frac{nS_{pp}}{d_p}\right\}\nonumber \\ 
	& \times \prod_{j = 1}^{p-1} \left(d_j\tau^2\right)^{-\frac {|Z_j|}{2}}\exp\left\{-\frac{\left(L_{Z.j}^>\right)^TL_{Z.j}^>}{\tau^2d_j}\right\} \times \prod_{j=1}^p \pi(d_j).
	\end{align}
	It now follows from 
	\begin{align*}
	\left(L_{Z.j}^\ge\right)^TS_{Z}^{\ge j}L_{Z.j}^\ge = \left(1,\left(L_{Z.j}^>\right)^T\right) \times \quad
	\begin{pmatrix} 
	S_{jj} & \left(S_{Z.j}^>\right)^T \\
	S_{Z.j}^> & S_Z^{>j}
	\end{pmatrix}
	\quad
	\times  \left(1,L_{Z.j}^>\right),
	\end{align*}
	that
	\begin{align}
	\begin{split}
	&\exp\left\{-\frac{n \left(L_{Z.j}^\ge\right)^TS_{Z}^{\ge j}L_{Z.j}^\ge}{2d_j} -\frac{\left(L_{Z.j}^>\right)^TL_{Z.j}^>}{\tau^2d_j}\right\}\\
	=& \exp\left\{-\frac{\left(L_{Z.j}^> + \left(\tilde S_Z^{>j}\right)^{-1}\tilde S_{Z.j}^>\right)^T\tilde S_Z^{>j}\left(L_{Z.j}^> + \left(\tilde S_Z^{>j}\right)^{-1}\tilde S_{Z.j}^>\right)}{\frac{2d_j}{n}}\right\}\\
	&\times \exp\left\{-\frac{\tilde S_{jj} - \left(\tilde S_{Z.j}^>\right)^T\left(\tilde S_Z^{>j}\right)^{-1}\tilde S_{Z.j}^>}{\frac{2d_j}{n}} + \frac 1 {2\tau^2d_j}\right\},	 
	\end{split}
	\end{align}
	where $\tilde{S} = S+\frac 1 {n\tau^2} I_p$.\\
If follows from Lemma \ref{graph_ratio_lemma}, (\ref{posterior1}) and (\ref{posterior2}) that integrating out $(L,D)$ gives us
	\begin{align} \label{posterior_propto11}
	&\pi(Z | \bm Y)\nonumber\\
	\propto& \pi(Z)\prod_{j = 1}^{p-1} \frac {1}{(n\tau^2)^{|Z_j|/2}} \left(\frac{n\tilde S_{j|Z_j}}{2} - \frac 1 {2\tau^2} + \lambda_2\right)^{-\frac n 2 - \lambda_1} |\tilde S_Z^{>j}|^{-\frac 1 2}\\ \nonumber
	= & \pi(Z)\prod_{j = 1}^{p-1} \frac {1}{(n\tau^2)^{|Z_j|/2}} \left(\frac{n\tilde S_{j|Z_j}}{2} - \frac 1 {2\tau^2} + \lambda_2\right)^{-\frac n 2 - \lambda_1} \left(|\tilde S_{Z}^{\ge i}|\tilde{S}_{j|Z_j}\right)^{-\frac 1 2},
	\end{align}
	in which $\tilde{S}_{j|Z_j} = \tilde{S}_{jj} - (\tilde{S}_{Z \cdot j}^>)^T 
	(\tilde{S}_Z^{>j})^{-1} \tilde{S}_{Z \cdot j}^>$. 

\noindent
Now note that we are interested in obtaining the posterior ratio. It immediately follows from (\ref{graph_ratio}) that, for $p \ge 4+\frac{4}{\alpha_1} + 2\sqrt{\alpha_1}$, given the data $Y$, the posterior ratio for any $Z$ compared to $Z_0$ can be simplified as
\begin{align} \label{m5_scale}
&\frac{\pi({Z}|\bm{Y})}{\pi({Z}_0|\bm{Y})} \nonumber\\
=& M_1\prod_{j=1}^{p-1} (n\tau^2)^{-\frac{|Z_j| - |{Z_0}_j|}2} \frac{B(\alpha_1+ |Z_j|, \alpha_2)}{B(\alpha_1+ |{Z_0}_j|, \alpha_2)}\nonumber \\
&\times \frac{|\tilde{S}_{Z_0}^{\ge j}|^{\frac12}}{|\tilde{S}_{Z}^{\ge j}|^{\frac12}}\left(\frac{\tilde{S}_{j|{Z_0}_j}}{\tilde{S}_{j|Z_j}}\right)^{\frac 1 2} \left(\frac{\tilde S_{j|{Z_0}_j}- \frac 1 {n\tau_{n,p}^2}+ \frac{2\lambda_2}{n}}{\tilde S_{j|{Z}_j} - \frac 1 {n\tau_{n,p}^2} + \frac{2\lambda_2}{n}}\right)^{\frac n 2 + \lambda_1} \nonumber\\
\triangleq& M_1\times PR^\prime_j(Z,Z_0),
\end{align} where $M_1 = e^{2\alpha_1^2  + 2\alpha_1 + \frac 2 {\alpha_{1}}}$, $\tilde{S} = S+\frac 1 {n\tau_{n,p}^2} I_p$ and $\tilde{S}_{j|Z_j} = \tilde{S}_{jj} - (\tilde{S}_{Z \cdot j}^>)^T 
(\tilde{S}_Z^{>j})^{-1} \tilde{S}_{Z \cdot j}^>$.
\end{proof}
Next, we show that in our setting, the sample and 
population covariance matrices are sufficiently close with high probability. It follows by Lemma A.3 
of \citep{Bickel:Levina:2008} and Hanson-Wright inequality from \citep{rudelson2013} that there exists constants $m_1,m_2$ and $\delta$ 
depending on $\epsilon_{0,n}$ only such that for $1 \le i,j \le p$, we have:
\begin{equation*}
\bar{P}(| S_{ij} - (\Sigma_0)_{ij} | \ge t) \le m_1 \exp\{-m_2n(t\epsilon_{0})^2\}, \, |t| \le \delta.
\end{equation*}
\noindent
By the union-sum inequality, for a large enough $c'$ such that $2-m_2(c')^2/4 < 0$, we 
get that 
\begin{equation} \label{smplbound2}
\bar{P}\left(\Vert {S}-\Sigma_0 \Vert_{\max} \ge c' \sqrt{\frac{\log p}{n}}\right) 
\leq mp^{2-m'c'^2/4} \rightarrow 0. 
\end{equation}
Define the event $C_n$ as 
\begin{equation} \label{smplbound1}
C_n = \left\{\Vert {S}-\Sigma_0 \Vert_{\max} \ge c' \sqrt{\frac{\log p}{n}}\right\}. 
\end{equation}
We now analyze the behavior of $PR^\prime_j(Z,Z_0)$ defined in (\ref{m5}) under different scenarios in a sequence of three lemmas (Lemmas \ref{lm4} - \ref{lm6}). Recall that our 
goal is to find an upper bound for 
$PR^\prime_j(Z,Z_0)$, such that the upper bound converges to $0$ as $n 
\rightarrow \infty$. For all the following analyses, we will restrict ourselves to the event $C_n^c$. 
\begin{lemma} \label{lm4}
	If all the active elements in set ${Z_j}_0$ are contained in the true model ${Z}_j$ denoted as $Z_j \supset {Z_0}_j$, then there exists $N_1$ (not depending on $Z$) such that for $n \geq N_1$ we have for some constant $\kappa > 1$, $PR^\prime_j(Z,Z_0) \le \left(2p\right)^{-\frac c \kappa (|Z_j| - |{Z_0}_j|)}  \rightarrow 0,  \mbox{ as } n \rightarrow \infty.$
\end{lemma}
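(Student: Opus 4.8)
The plan is to work throughout on the event $C_n^c$ from (\ref{smplbound1}), on which $\lVert S - \Sigma_0 \rVert_{\max} \le c'\sqrt{\log p / n}$, and to bound the four factors comprising $PR^\prime_j(Z,Z_0)$ in (\ref{m5_scale}) separately, writing $k_j = |Z_j| - |{Z_0}_j| \ge 1$. The leading penalty will come from the beta ratio: using $B(a,b) = \Gamma(a)\Gamma(b)/\Gamma(a+b)$ together with the telescoping identity $\Gamma(\alpha_1+|Z_j|)/\Gamma(\alpha_1+|{Z_0}_j|) = \prod_{m=|{Z_0}_j|}^{|Z_j|-1}(\alpha_1+m)$ and the analogous product in the denominator, I would show
\begin{align*}
\frac{B(\alpha_1+|Z_j|,\alpha_2)}{B(\alpha_1+|{Z_0}_j|,\alpha_2)} \le \left(\frac{\alpha_1+|Z_j|}{\alpha_2}\right)^{k_j}.
\end{align*}
Since $|Z_j| \le R_n$ and $p \ge n$, the numerator is $O(p)$, so Assumption \ref{assumption:alpha_2_multi} ($\alpha_2 \sim \max\{p^c, d^{2c/(c-2)}\}$) makes this at most $(C p^{1-c})^{k_j}$, which, together with the shrinking factor $(n\tau^2)^{-k_j/2}$, supplies the dominant decay.

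The determinant ratio is benign: because ${Z_0}_j \subset Z_j$, the matrix $\tilde S_{Z_0}^{>j}$ is a principal submatrix of $\tilde S_Z^{>j}$, so the Schur-complement factorization $|\tilde S_Z^{>j}| = |\tilde S_{Z_0}^{>j}|\cdot|\text{(conditional block)}|$ together with the eigenvalue control of Assumption 1 on $C_n^c$ gives $|\tilde S_{Z_0}^{>j}|^{1/2}/|\tilde S_Z^{>j}|^{1/2} \le C_0^{k_j/2}$ for a constant $C_0$ depending only on $\epsilon_0$. The crux is the residual-variance power term. Since conditioning on a larger set cannot increase the conditional variance, $\tilde S_{j|{Z_0}_j} \ge \tilde S_{j|Z_j}$, so the base exceeds $1$ and must be weighed against the exponent $\frac n2 + \lambda_1$. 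Writing $\Delta_j = \tilde S_{j|{Z_0}_j} - \tilde S_{j|Z_j} \ge 0$, I would express $\Delta_j$ as a quadratic form in the partial covariances $r_\ell$ of the spurious variables $\ell \in Z_j \setminus {Z_0}_j$ with variable $j$ given ${Z_0}_j$. In the population these partial covariances vanish, since the true parents ${Z_0}_j$ already render $Y_j$ conditionally independent of the spurious coordinates; hence on $C_n^c$ each $r_\ell$ is a fluctuation of size $O(\sqrt{\log p / n})$, and a chi-square/union-bound estimate over the spurious coordinates yields $\Delta_j / \tilde S_{j|Z_j} \le C_1 k_j \log p / n$ (the denominator $\tilde S_{j|Z_j} - \tfrac{1}{n\tau^2} + \tfrac{2\lambda_2}{n} \ge \lambda_{\min}(S)$ being bounded below). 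Consequently the power term is at most $\exp\!\left(\tfrac n2 \cdot C_1 k_j \log p / n\right) = p^{C_1 k_j/2}$.

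Combining the four bounds produces a per-edge factor of the form $(n\tau^2)^{-1/2}\, C' p^{\,1 - c + C_1/2}$ raised to the power $k_j$. I would then invoke the margin built into Assumptions \ref{assumption:tau} and \ref{assumption:alpha_2_multi}: because $c > 2\kappa$ with $\kappa > 1$, one has $c(1 - 1/\kappa) \ge 1 + C_1/2$ for an admissible choice of $\kappa$, and the growth rate of $\tau$ (whose exponent $(1-1/\kappa)c$ in Assumption \ref{assumption:tau} is precisely the gap between $c$ and the target exponent $c/\kappa$) absorbs the remaining constants and $\log n$ factors. The net per-edge factor is therefore at most $(2p)^{-c/\kappa}$, giving $PR^\prime_j(Z,Z_0) \le (2p)^{-\frac{c}{\kappa} k_j} \to 0$ since $k_j \ge 1$; the uniform threshold $N_1$ (independent of $Z$) is produced by the finitely many limit conditions in Assumptions \ref{assumption:d_n}, \ref{assumption:tau} and \ref{assumption:alpha_2_multi}.

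I expect the main obstacle to be the control of the overfitting term uniformly over over-models whose size can grow with $n$ (up to $R_n$): the conditioning submatrices $\tilde S_Z^{>j}$ then have dimension increasing with $n$, so the entrywise closeness on $C_n^c$ must be upgraded to spectral control of the relevant blocks, and the chi-square tail for $\Delta_j$ must be made uniform over exponentially many candidate supports without letting the $p^{C_1 k_j/2}$ inflation overtake the $\alpha_2^{-k_j}$ penalty. Verifying that the interplay between $c$, $\kappa$ and the $d^{2c/(c-2)}$ part of $\alpha_2$ leaves a strictly positive exponent $-c/\kappa$ per edge is the delicate accounting step on which the whole lemma turns.
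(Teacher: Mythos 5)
Your four-factor decomposition and your beta-ratio bound are sound: the telescoping identity indeed gives $\frac{B(\alpha_1+|Z_j|,\alpha_2)}{B(\alpha_1+|{Z_0}_j|,\alpha_2)} = \prod_{m=0}^{k_j-1}\frac{\alpha_1+|{Z_0}_j|+m}{\alpha_1+\alpha_2+|{Z_0}_j|+m} \le \left(\frac{\alpha_1+|Z_j|}{\alpha_2}\right)^{k_j}$, which is cleaner than the paper's appeal to Watson's Gamma-function inequality in (\ref{ratio_B_scale}) and yields the same $p^{-(c-1)k_j}$-type penalty. However, there is a genuine gap at exactly the point you flag in your last paragraph, and it is not a technicality that your plan can absorb. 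Both your determinant-ratio bound and your bound on $\Delta_j$ presuppose uniform spectral lower bounds on random blocks whose dimension can be as large as $R_n \sim n/\log n$ (Assumption \ref{assumption:Z}), and these cannot be extracted from $C_n^c$. The event in (\ref{smplbound1}) controls only $\lVert S-\Sigma_0\rVert_{\max}$; passing to the operator norm costs a factor of the block dimension, and $(|Z_j|+1)\sqrt{\log p/n}$ diverges when $|Z_j| \asymp n/\log n$ (recall $p \ge n$). So the claim $|\tilde{S}_{Z_0}^{\ge j}|^{1/2}/|\tilde{S}_{Z}^{\ge j}|^{1/2} \le C_0^{k_j/2}$ ``by Assumption 1 on $C_n^c$'' is unjustified; the claim $\Delta_j/\tilde S_{j|Z_j} \le C_1 k_j \log p/n$ requires inverting a $k_j$-dimensional conditional block of $\tilde S$ uniformly over exponentially many supports, which is the same unavailable control; and your parenthetical lower bound $\lambda_{\min}(S)$ for the denominator is vacuous, since $S$ is singular whenever $p \ge n$.

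The paper resolves both points by structurally different means, and its bookkeeping is incompatible with yours. For the determinant it uses only the ridge: $SC_{\tilde{S}_{Z_0}^{\ge j}}^{-1}$ is a principal submatrix of $\left(\tilde{S}_{Z}^{\ge j}\right)^{-1} \preceq n\tau^2 I$, so the ratio is at most $\left(\sqrt{n\tau^2}\right)^{k_j}$ (display (\ref{new7})) --- deterministic and uniform in $Z$, but it exactly consumes the factor $(n\tau^2)^{-k_j/2}$ that your accounting treats as the dominant source of decay. For the power term, the paper uses exact Gaussian distribution theory rather than entrywise fluctuations: when $Z_j \supset {Z_0}_j$, one has $n{d_0}_j^{-1}S_{j|Z_j} \sim \chi^2_{n-|Z_j|}$ and $n{d_0}_j^{-1}\left(S_{j|{Z_0}_j}-S_{j|Z_j}\right) \sim \chi^2_{|Z_j|-|{Z_0}_j|}$, so chi-square concentration ((\ref{chisquarebound_S_D}) and (\ref{chisquarebound_S_D_D_0})) bounds the exponentiated ratio by $\exp\left\{O\left(k_j+\sqrt{k_j\log p}\right)\right\}$ with no spectral control of $S_Z^{>j}$ needed at all. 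The net decay then comes from the beta ratio alone, and the margin $c > 2\kappa$ in Assumption \ref{assumption:alpha_2_multi}, together with the $\tau$-condition in Assumption \ref{assumption:tau} and the $R_n$ bound of Assumption \ref{assumption:Z}, absorbs the chi-square inflation; this is the accounting carried out in (\ref{PR_upbound_1_scale}). So your proposal, as written, would not close: the fix is not an upgrade of entrywise to spectral control (impossible in this regime), but the two substitutions above, after which the $(n\tau^2)^{-k_j/2}$ factor can no longer be counted as decay.
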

\begin{proof} [Proof of Lemma \ref{lm4}]
	We begin by simplifying the posterior ratio given in (\ref{m5}). Using the fact that 
	$
	\sqrt{x + \frac{1}{4}} \leq \frac{\Gamma (x+1)}{\Gamma \left( x+\frac{1}{2} \right)} 
	\leq \sqrt{x + \frac{1}{2}} 
	$
	for $x > 0$ (see \citep*{Watson:1959}), it follows from Assumption \ref{assumption:alpha_2_multi}, $|Z_j| > |{Z_0}_j|$, and $1 + x \le e^x$, $1 - x \le e^{-x}$, for $0 \le x \le 1$, that for a large enough constant $M$, 
	and large enough $n$, we have  
	\begin{align} \label{ratio_B_scale}
	&\frac{B(\alpha_1+ |Z_j|, \alpha_2)}{B(\alpha_1+ |{Z_0}_j|, \alpha_2)} \nonumber\\
	=& \frac{\Gamma(|Z_j|+\alpha_1)\Gamma(\alpha_1+\alpha_2+|{Z_0}_j|)}{\Gamma(|{Z_0}_j|+\alpha_1)\Gamma(\alpha_1 +\alpha_{2} + |{Z}_j|)} \nonumber\\
	\le& M\frac{(|Z_j|+\alpha_1)^{|Z_j|+\alpha_1}}{(|{Z_0}_j|+\alpha_1)^{|{Z_0}_j|+\alpha_1}} \frac{(\alpha_1 + \alpha_2 + |{Z_0}_j|)^{\alpha_1 + \alpha_2 + |{Z_0}_j|}}{(\alpha_1 +\alpha_{2} + |{Z}_j|)^{\alpha_1 +\alpha_{2} + |{Z}_j|}} \nonumber\\
	\le&M(|Z_j|+\alpha_1)^{|Z_j|-|{Z_0}_j|}\left(1 + \frac{|Z_j| - |{Z_0}_j|}{|{Z_0}_j|+\alpha_1}\right)^{|{Z_0}_j|+\alpha_1} \nonumber \\
	& \times (\alpha_1 +\alpha_{2} + |{Z_0}_j|)^{-(|Z_j| - |{Z_0}_j|)}\left(1 - \frac{|Z_j| - |{Z_0}_j|}{\alpha_1 + \alpha_2 + |{Z}_j|}\right)^{\alpha_1 + \alpha_2 + |{Z}_j|} \nonumber\\
	\le& (c_1p^c/|Z_j|)^{-(|Z_j| - |{Z_0}_j|)},
	\end{align}
	for some constant $c_1 > 0$. \\
	Next, since $Z_j \supset {Z_0}_j$, we can write 
	$|\tilde{S}_{Z}^{\ge i}| = |\tilde{S}_{Z_0}^{\ge i}| 
	|SC_{{\tilde{S}}_{Z_0}^{\ge i}}|$. Here 
	$SC_{{\tilde{S}}_{Z_0}^{\ge i}}$ is the Schur complement of 
	${\tilde{S}}_{Z_0}^{\ge i}$, defined by 
	$$
	SC_{{\tilde{S}}_{Z_0}^{\ge i}} = D - B^T \left({\tilde{S}}_{Z_0}^{\ge 
		i}\right)^{-1} B 
	$$ 
	
	\noindent
	for appropriate sub matrices $A$ and $B$ of $\tilde{S}_{Z}^{\ge j}$. Since 
	${\tilde{S}}_{Z}^{\ge j} \geq 
	\left(\frac 1 {n\tau_{n,p}^2} I_p\right)_{Z}^{\ge j}$~\footnote{For matrices $A$ and $B$, we say $A \ge B$ if $A - B$ is positive semi-definite}, and 
	$SC_{{\tilde{S}}_{Z_0}^{\ge j}}^{-1}$ is a principal submatrix of 
	$\left( \tilde{S}_{Z}^{\ge j} \right)^{-1}$, the largest eigenvalue of $SC_{{\tilde{S}}_{Z_0}^{\ge j}}^{-1}$ is bounded above by ${n\tau_{n,p}^2}$. Therefore,
	\begin{equation} \label{new7}
	\left(\frac{|\tilde{S}_{Z_0}^{\ge i}|}{|\tilde{S}_{Z}^{\ge j}|} \right)^{\frac12} = {\lvert SC_{\tilde{S}_{Z_0}^{\ge j}}^{-1}\rvert^{1/2}}
	\le \left(\sqrt{{n\tau_{n,p}^2}}\right)^{|Z_j| - |{Z_0}_j|}.
	\end{equation}
	Denote ${S}_{j|Z_j} = {S}_{jj} - ({S}_{Z \cdot j}^>)^T 
	({S}_Z^{>j})^{-1} {S}_{Z \cdot j}^>$. It immediately follows that
	\begin{equation} \label{tilde_s_lowerbound}
	\tilde{S}_{i|{Z}_j} \ge {S}_{i|{Z}_j}.
	\end{equation}
	Since we are restricting ourselves to the event $C_n^c$, it follows by 
	(\ref{smplbound1}) that 
	\begin{equation*}
	|| {S}_{Z_0}^{\ge i} - (\Sigma_0)_{Z_0}^{\ge i} ||_{(2,2)} \le 
	(|{Z_0}_j|+1)c^\prime\sqrt{\frac{\log p}{n}}.
	\end{equation*}
	Therefore,
	\begin{align} \label{pp1}
	\begin{split}
	&|| ({S}_{Z_0}^{\ge i})^{-1} - ((\Sigma_0)_{Z_0}^{\ge i})^{-1} ||_{(2,2)} \\ 
	=& || ({S}_{Z_0}^{\ge i})^{-1}||_{(2,2)} || {S}_{Z_0}^{\ge i} - (\Sigma_0)_{Z_0}^{\ge i}||_{(2,2)} || 
	((\Sigma_0)_{Z_0}^{\ge i})^{-1} ||_{(2,2)} \\
	\le &	(|| ({S}_{Z_0}^{\ge i})^{-1} - ((\Sigma_0)_{Z_0}^{\ge i})^{-1} ||_{(2,2)} + \frac 1 {\epsilon_{0}})(|{Z_0}_j|+1)c^\prime \sqrt{\frac{\log p}{n}}.
	\end{split}
	\end{align}
	Recall $d = \max_{1 \le j \le p-1}|{Z_0}_j|$. By the assumption that $d\sqrt{\frac{\log p}{n}} \rightarrow 0$ and (\ref{pp1}), for large enough $n$, we have 
	\begin{equation} \label{inverse_s_sigma}
	|| ({S}_{Z_0}^{\ge i})^{-1} - ((\Sigma_0)_{Z_0}^{\ge i})^{-1} ||_{(2,2)} \le \frac {4c\prime} {\epsilon_0}d\sqrt{\frac{\log p}{n}} = o(1) \mbox{ and } \frac 1 {{S}_{i|{Z_0}_j}} = \left[({S}_{Z_0}^{\ge i})^{-1}\right]_{ii}
	\ge \frac {\epsilon_{0}} 2.
	\end{equation}
	Note that for any $Z$, $|| \tilde{S}_{Z}^{\ge j} - S_{Z}^{\ge j} ||_{\max} \le \frac 1 {n\tau_{n,p}^2}$ gives us
	$|| \tilde{S}_{Z_0}^{\ge j} - S_{Z_0}^{\ge j}||_{(2,2)} \le (|{Z_0}_j|+1) \frac 1 {n\tau_{n,p}^2}.$
	Therefore,
	\begin{align} \label{pp2}
	\begin{split}
	&|| (\tilde{S}_{Z_0}^{\ge j})^{-1} - (S_{Z_0}^{\ge j})^{-1}||_{(2,2)} \\ 
	=& || (\tilde{S}_{Z_0}^{\ge i})^{-1}||_{(2,2)} || \tilde{S}_{Z_0}^{\ge i} - S_{Z_0}^{\ge i}||_{(2,2)} || 
	(S_{Z_0}^{\ge i})^{-1} ||_{(2,2)} \\
	\le &	(|| (\tilde{S}_{Z_0}^{\ge j})^{-1} - (S_{Z_0}^{\ge j})^{-1} + ||_{(2,2)} +|| ({S}_{Z_0}^{\ge i})^{-1} - ((\Sigma_0)_{Z_0}^{\ge i})^{-1} ||_{(2,2)} +  \frac 1 {\epsilon_{0}})(|{Z_0}_j|+1)\frac 1 {n\tau_{n,p}^2}.
	\end{split}
	\end{align}
	Following from (\ref{inverse_s_sigma}) and $\frac{d}{n\tau_{n,p}^2} \rightarrow 0$, for large enough $n$, (\ref{pp2}) yields
	\begin{equation} \label{inverse_s_stilde}
	|| (\tilde{S}_{Z_0}^{\ge j})^{-1} - (S_{Z_0}^{\ge j})^{-1} ||_{(2,2)} \le \frac{8}{\epsilon_{0}} \frac d {n\tau_{n,p}^2} \mbox{ and }
	\frac 1 {\tilde{S}_{i|{Z_0}_j}} = \left[(\tilde{S}_{Z_0}^{\ge i})^{-1}\right]_{ii}
	\ge \frac {\epsilon_{0}} 4.
	\end{equation}
	Hence, it follow from (\ref{inverse_s_stilde}) and (\ref{inverse_s_sigma}) that, 
	\begin{align}
	|\frac 1 {{S}_{i|{Z_0}_j}} - \frac 1 {\tilde{S}_{i|{Z_0}_j}}| \le \frac{8}{\epsilon_{0}} \frac d {n\tau_{n,p}^2} \mbox{ and } |{{S}_{i|{Z_0}_j}} - {\tilde{S}_{i|{Z_0}_j}}| \le c_1 \frac d {n\tau_{n,p}^2},
	\end{align}
	where $c_1 = 64/\epsilon_{0}^3$ is a constant.\\
	Further note that $n{d_0}_j^{-1}{S}_{i|{Z}_j} \sim \chi^2_{n-|{{Z}_j}|}$ and $n{d_0}_j^{-1}{S}_{i|{Z_0}_j} \stackrel{d}{=} n{d_0}_j^{-1}{S}_{i|{Z}_j} \oplus \chi^2_{|Z_j|-|{{Z_0}_j}|}$ under the true model. Since ${Z_0}_j \subset Z_j$, we get ${S}_{j|{Z_0}_j}  \ge S_{j|Z_j}$ and $\tilde {S}_{j|{Z_0}_j}  \ge \tilde S_{j|Z_j}$. It follows from Lemma 4.1 in \cite{CKG:nonlocal} that
	\begin{align} \label{chisquarebound_S_D}
	\begin{split}
	P\left[\left\lvert n{d_0}_{j}^{-1}{S}_{j|{Z}_j} - (n-|Z_j|)\right\rvert > \sqrt{(n-|Z_j|)\log p} \right] &\le 2p^{-\frac 1 8} \rightarrow 0,	
	\end{split}
	\end{align}
	and
	\begin{align} \label{chisquarebound_S_D_D_0}
	\begin{split}
	P\left[\left\lvert n{d_0}_{j}^{-1}{S}_{j|{Z_0}_j} - n{d_0}_{j}^{-1}{S}_{j|{Z}_j} - (|Z_j| - |{Z_0}_j|)\right\rvert > \sqrt{(|Z_j| - |{Z_0}_j|)\log p} \right] &\le 2p^{-\frac 1 8} \rightarrow 0.
	\end{split}
	\end{align}
	Following from Assumption \ref{assumption:Z}, Assumption \ref{assumption:tau}, Lemma \ref{newlemma1}, (\ref{new7}), (\ref{tilde_s_lowerbound}) and (\ref{inverse_s_stilde}), for larger enough $n > N_1$, we have

	\begin{align} \label{PR_upbound_1_scale}
	&PR^\prime_j(Z,Z_0) \nonumber \\
	\le& (c_1\sqrt{n\tau^2}p^c/|Z_j|)^{-(|Z_j| - |{Z_0}_j|)}\left(1 + \frac{n{d_0}_j^{-1}{S}_{j|{Z_0}_j} - n{d_0}_j^{-1}S_{j|{Z_0}_j} + c_1 \frac d {{d_0}_j\tau_{n,p}^2}}{n{d_0}_j^{-1}S_{j|{Z}_j}}\right)^{\frac 1 2}\nonumber \\ 
	&\times \left(1 + \frac{n{d_0}_j^{-1}{S}_{j|{Z_0}_j} - n{d_0}_j^{-1}S_{j|{Z_0}_j} + c_1 \frac d {{d_0}_j\tau_{n,p}^2}+ \frac{2\lambda_2}{{d_0}_j}}{n{d_0}_j^{-1}S_{j|{Z}_j}+ \frac{2\lambda_2}{{d_0}_j}}\right)^{\frac n 2 + \lambda_1}\nonumber \\ 
	\le & (2p^{-c})^{|Z_j| - |{Z_0}_j|}\left(\sqrt{\frac {\tau^2} n} \log n\right)^{-(|Z_j| - |{Z_0}_j|)} \nonumber\\
	&\times  \exp\left\{\frac{|Z_j| - |{Z_0}_j| + \sqrt{(|Z_j| - |{Z_0}_j|)\log p} + c_1\frac d {\tau_{n,p}^2}}{n - |Z_j| -  \sqrt{(n - |{Z}_j|)\log p}} \times (\frac {n+1} 2 + \lambda_1) \right\}\\ \nonumber
	\le & \left(2p\right)^{-\frac c \kappa (|Z_j| - |{Z_0}_j|)}, \mbox{ for some constant } \kappa > 1.
	\end{align}
	The second inequality follows from $\frac{d}{\tau_{n,p}^2\log p} \rightarrow 0$, as $n \rightarrow \infty$.
\end{proof}
\begin{lemma} \label{lm5}
	If all the active elements in set $Z_j$ are contained in the true model ${Z_0}_j$ denoted as $Z_j \subset {Z_0}_j$, then there exists $N_2$ (not depending on $Z$) such that for $n \geq N_2$ we have $PR^\prime_j(Z,Z_0) \le p^{-\frac{2c}{\kappa}d} \rightarrow 0,  \mbox{ as } n \rightarrow \infty.$
\end{lemma}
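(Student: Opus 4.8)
The plan is to treat Lemma \ref{lm5} as the ``underfitting'' counterpart of Lemma \ref{lm4}. Write $T = {Z_0}_j \setminus Z_j$ and $t = |{Z_0}_j| - |Z_j| \ge 1$ for the set and number of omitted true edges. Since now $|Z_j| < |{Z_0}_j|$, the factor $(n\tau^2)^{-(|Z_j|-|{Z_0}_j|)/2} = (n\tau^2)^{t/2}$ and the Beta ratio in (\ref{m5_scale}) both \emph{grow} with $n$ and $p$, so all of the decay must be extracted from the last factor $\big(\tilde S_{j|{Z_0}_j}/\tilde S_{j|Z_j}\big)^{\frac n2+\lambda_1}$. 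The key point is that dropping genuine signals forces the residual variance $\tilde S_{j|Z_j}$ to exceed $\tilde S_{j|{Z_0}_j}$ by an amount governed by the beta-min quantity $s$, making this factor decay like $\exp\{-c\,n s^2 t\}$; Assumption \ref{assumption:s_n} (so that $n s^2 \gg d\log p$) then guarantees this exponential overwhelms the polynomial-in-$p$ growth.

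Working throughout on $C_n^c$, I would first bound the growth terms. Reversing the Gamma-ratio estimate (\ref{ratio_B_scale}) for $|Z_j| < |{Z_0}_j|$ and using $\alpha_2 \sim p^c$ gives $\frac{B(\alpha_1+|Z_j|,\alpha_2)}{B(\alpha_1+|{Z_0}_j|,\alpha_2)} \le (c_1 p^c/|{Z_0}_j|)^{t}$. Since $Z_j \subset {Z_0}_j$, the matrix $\tilde S_Z^{\ge j}$ is a principal submatrix of $\tilde S_{Z_0}^{\ge j}$, so $|\tilde S_{Z_0}^{\ge j}| = |\tilde S_Z^{\ge j}|\,|SC|$ with $SC$ the $t\times t$ Schur complement, whose eigenvalues are bounded above by those of $\tilde S_{Z_0}^{\ge j}$; by Assumption 1 and $\frac1{n\tau^2}\to 0$ these are bounded by a constant on $C_n^c$, so $\big(|\tilde S_{Z_0}^{\ge j}|/|\tilde S_Z^{\ge j}|\big)^{1/2} \le C^{t}$. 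Together with $\big(\tilde S_{j|{Z_0}_j}/\tilde S_{j|Z_j}\big)^{1/2} \le 1$, the product of all factors other than the exponential one is at most $\big(C\sqrt{n\tau^2}\,p^c\big)^{t}$.

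The heart of the proof is the exponential factor. At the population level I would show, by a regression argument, that the omitted parents inflate the residual variance: decomposing $Y_j$ into its projection onto $Y_{{Z_0}_j}$ plus independent noise and using $eig_1(\Sigma_0)\ge\epsilon_0$ together with $|(L_0)_{kj}|\ge s$ for $k\in{Z_0}_j$ yields $(\Sigma_0)_{j|Z_j} - (\Sigma_0)_{j|{Z_0}_j} \ge \epsilon_0 \sum_{k\in T}(L_0)_{kj}^2 \ge \epsilon_0 s^2 t$. I would then transfer this to the sample: on $C_n^c$ the relevant submatrices of $S$ are within $O(d\sqrt{\log p/n})$ of those of $\Sigma_0$ in operator norm (as in (\ref{inverse_s_sigma})), giving $S_{j|Z_j} \ge (\Sigma_0)_{j|Z_j} - O(d\sqrt{\log p/n})$ uniformly in $Z$, while the chi-square concentration $n\,{d_0}_j^{-1}S_{j|{Z_0}_j}\sim\chi^2_{n-|{Z_0}_j|}$ under the true model gives $S_{j|{Z_0}_j}\le {d_0}_j(1+o(1))$. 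Since Assumption \ref{assumption:s_n} makes the $O(d\sqrt{\log p/n})$ perturbation negligible relative to $s^2$, and the shifts $-\frac1{n\tau^2}+\frac{2\lambda_2}{n}$ vanish, I obtain $\frac{\tilde S_{j|{Z_0}_j}}{\tilde S_{j|Z_j}} \le 1 - c_2 s^2 t$ and hence $\big(\tilde S_{j|{Z_0}_j}/\tilde S_{j|Z_j}\big)^{\frac n2+\lambda_1} \le \exp\{-c_3 n s^2 t\}$ for large $n$.

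Combining, $PR^\prime_j(Z,Z_0) \le \big(C\sqrt{n\tau^2}\,p^c\,e^{-c_3 n s^2}\big)^{t}$, and since $t\ge 1$ it suffices to bound the per-edge factor. Taking logarithms, the growth contributes $\frac12\log(n\tau^2)+c\log p$, which is of smaller order than $c_3 n s^2$ under Assumptions \ref{assumption:d_n}, \ref{assumption:tau} and \ref{assumption:s_n}; in fact $n s^2/(d\log p)\to\infty$ makes $e^{-c_3 n s^2}$ smaller than $p^{-\frac{2c}{\kappa}d}$ times all the polynomial factors, so $PR^\prime_j(Z,Z_0)\le p^{-\frac{2c}{\kappa}d}\to 0$ for $n\ge N_2$ with $N_2$ independent of $Z$. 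I expect the main obstacle to be precisely the population lower bound $(\Sigma_0)_{j|Z_j} - (\Sigma_0)_{j|{Z_0}_j}\ge \epsilon_0 s^2 t$ and its \emph{uniform} transfer to the sample conditional variances over all submodels $Z\subset Z_0$ at once: this requires controlling $(\tilde S_Z^{>j})^{-1}$ uniformly and checking that the $O(d\sqrt{\log p/n})$ error remains negligible against $s^2$, which is exactly where Assumptions \ref{assumption:d_n} and \ref{assumption:s_n} enter.
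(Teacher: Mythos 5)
Your overall skeleton is the same as the paper's: working on $C_n^c$, you bound the Beta ratio by $(c_1 p^c/d)^t$, bound the determinant ratio by a constant per omitted edge via a Schur-complement/eigenvalue argument, use $\tilde S_{j|{Z_0}_j}\le \tilde S_{j|Z_j}$ for the middle factor, and extract all the decay from the last factor through the beta-min signal gap, closing with Assumption \ref{assumption:s_n}. Your direct regression derivation of the population gap $(\Sigma_0)_{j|Z_j}-(\Sigma_0)_{j|{Z_0}_j}\ge \epsilon_0\sum_{k\in T}(L_0)_{kj}^2\ge\epsilon_0 s^2 t$ is correct and is a nice self-contained substitute for the paper's appeal to Proposition 5.2 of \citep*{CKG:2017} (which the paper states in the equivalent precision form $1/(\Sigma_0)_{j|{Z_0}_j}-1/(\Sigma_0)_{j|Z_j}\ge\epsilon_0 s_n^2 t$).

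There is, however, a genuine flaw in the step converting the gap into \emph{per-edge} exponential decay. You claim $\tilde S_{j|{Z_0}_j}/\tilde S_{j|Z_j}\le 1-c_2 s^2 t$, hence $\bigl(\tilde S_{j|{Z_0}_j}/\tilde S_{j|Z_j}\bigr)^{\frac n2+\lambda_1}\le e^{-c_3 n s^2 t}$, so that $PR^\prime_j(Z,Z_0)\le \bigl(C\sqrt{n\tau^2}\,p^c e^{-c_3 n s^2}\bigr)^t$ and ``it suffices to bound the per-edge factor.'' What the gap actually yields is a bound of the form $\tilde S_{j|{Z_0}_j}/\tilde S_{j|Z_j}\le \bigl(1+\epsilon_0^2 s^2 t(1-o(1))\bigr)^{-1}$, and the inequality $(1+x)^{-1}\le e^{-cx}$ fails for large $x$, since $(1+x)^{-1}$ decays only polynomially in $x$. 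The regime $s^2t\gtrsim 1$ is allowed by the assumptions and is in fact typical: Assumption \ref{assumption:s_n} only bounds $s$ from below, so $s$ may stay bounded away from zero (e.g.\ all true entries equal $0.5$, as in the paper's own simulations) while the number $t$ of omitted edges can grow up to $d\rightarrow\infty$; then $1-c_2s^2t$ is negative, your displayed inequality is vacuous, and the per-edge factorization collapses. The paper avoids exactly this pitfall: it extracts only \emph{one} edge's worth of decay, using $t\ge1$ to reduce the gap to $\epsilon_0 s_n^2$ and the fact $\epsilon_0^2 s_n^2\le1$ to keep the $1-x$ versus $e^{-x}$ manipulations legitimate, while the $t$-fold polynomial growth $(\sqrt{n\tau^2}\,p^c)^t$ is charged at rate $t\le d$ and absorbed via $\frac{d\log p+d\log(n\tau^2)}{ns_n^2}\rightarrow0$, giving the uniform bound $p^{-\frac{2c}{\kappa}d}$. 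Your argument can be repaired the same way, or by a case split: when $\epsilon_0^2s^2t\le1$ your chain is valid; when $\epsilon_0^2s^2t>1$, $\log\bigl(1+\epsilon_0^2s^2t\bigr)\ge\log 2$ already gives decay $e^{-(n/2)\log 2}$, which dominates $(C\sqrt{n\tau^2}\,p^c)^d$ because $d\log p=o(n)$ under Assumption \ref{assumption:d_n}. As written, though, the chain of inequalities does not establish the lemma.
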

\begin{proof} [Proof of Lemma \ref{lm2}]
	Now we move to discuss the scenario when $Z_j$ is a subset of ${Z_0}_j$, i.e., $Z_j \subset {Z_0}_j$. 
	By the similar arguments in (\ref{ratio_B_scale}), it follows from Assumption \ref{assumption:alpha_2} and $|Z_j| < |{Z_0}_j|$, that for a large enough constant $c_1$ 
	and large enough $n$, we have  
	\begin{align} \label{ratio_B_subset}
		&\frac{B(\alpha_1+ |Z_j|, \alpha_2)}{B(\alpha_1+ |{Z_0}_j|, \alpha_2)} \nonumber\\
	=& \frac{\Gamma(|Z_j|+\alpha_1)\Gamma(\alpha_1+\alpha_2+|{Z_0}_j|)}{\Gamma(|{Z_0}_j|+\alpha_1)\Gamma(\alpha_1 +\alpha_{2} + |{Z}_j|)} \nonumber\\
	\le& (c_1p^c/d)^{|{Z_0}_j|-|Z_j|}.
	\end{align}
		It follows that $|\tilde{S}_{Z_0}^{\ge i}| = |\tilde{S}_{Z}^{\ge i}| 
	|SC_{{\tilde{S}}_{Z}^{\ge i}}|$, where $SC_{{\tilde{S}}_{Z}^{\ge i}}$ denotes the Schur complement of ${\tilde{S}}_{Z}^{\ge i}$, defined by $SC_{{\tilde{S}}_{Z}^{\ge i}} = \tilde{A} - \tilde{B}^T ({\tilde{S}}_{Z}^{\ge i})^{-1} \tilde{B}$ for appropriate sub matrices $\tilde{A}$ and $\tilde{B}$ of $\tilde{S}_{{Z_0}}^{\ge i}$. Recall that $d$ is the maximum number of nonzero entries among all the columns of $Z_0$. It follows by (\ref{pp1}) that if restrict to $C_n^c$, we have $$||(\tilde{S}_{{Z_0}}^{\ge i})^{-1} - ({(\Sigma_0)}_{{Z_0}}^{\ge i})^{-1}||_{(2,2)} \le \frac{4c'}{\epsilon_{0}} d \sqrt{\frac{\log p}{n}}$$ and $$ ||SC_{{\tilde{S}}_{Z}^{\ge i}}^{-1} - 
	SC_{(\Sigma_0)_{Z}^{\ge i}}^{-1}||_{(2,2)} \le \frac{4c'}{\epsilon_{0}} d \sqrt{\frac{\log p }{n}},$$ for $n > N_2'$, in which $SC_{(\Sigma_0)_{Z}^{\ge i}}$ represents the Schur complement of 
	$(\Sigma_0)_{Z}^{\ge i}$ given by $SC_{(\Sigma_0)_{Z}^{\ge i}} = \bar{A} - \bar{B}^T 
	((\Sigma_0)_{Z}^{\ge i})^{-1} \bar{B}$ for appropriate sub matrices $\bar{A}$ and $\bar{B}$ of 
	$(\Sigma_0)_{Z_0}^{\ge i}$. Hence,
	there exists $N_2''$ such that, for $n > N_2''$, we have
	\begin{align*}
	\left(\frac{|\tilde{S}_{Z_0 }^{\ge i}|}{|\tilde{S}_{Z}^{\ge i}|} \right)^{\frac12} = {|SC_{{\tilde{S}}_{Z}^{\ge i}}^{-1}|^{-\frac 1 2}} 
	&\le {\left(\lambda_{\mbox{min}}\left(SC_{{(\Sigma_0)}_{Z}^{\ge i}}^{-1}\right)- \frac{4c'}{\epsilon_{0}}d\sqrt{\frac{\log p}{n}} \right)^{-\frac{|{Z_0}_j| - |Z_j|}{2}}}\\
	&\le \left(\frac{\epsilon_0}{2} \right)^{-\frac{|{Z_0}_j| - |Z_j|}{2}}.
	\end{align*}
	It follows from $Z_j \subset {Z_0}_j$ that
	$
	\tilde{S}_{j|{Z_0}_j}  \le \tilde{S}_{j|Z_j}. 
	$
	
	\noindent
	Let $K_1 = \frac{4c'}{\epsilon_{0}}$. By (\ref{m5}) and Proposition 5.2 in \citep*{CKG:2017}, it follows 
	that there exists $N_2'''$ such that for $n \geq N_2'''$, we get 
	\begin{align} \label{pp6}
	&PR^\prime_j(Z,Z_0) \nonumber \\ 
	\le& \left(\sqrt{\frac{2n\tau_{n,p}^2}{\epsilon_{0}}} c_1p^{c}/d
	\right)^{|{Z_0}_j| - |Z_j|}  \left (\frac{\frac{1}{(\Sigma_0)_{j|Z_j}}+K_1 d \sqrt{\frac{\log p }{n}} - \frac{1}{n\tau_{n,p}^2}}
	{\frac{1}{(\Sigma_0)_{j|{Z_0}_j}}-K_1 d \sqrt{\frac{\log p}{n}} - \frac{1}{n\tau_{n,p}^2}} \right)^{\frac{n}{2}+\lambda_1} \nonumber \\
	\le& \left(\exp\left\{\frac{d \log\left(\frac{2}{\epsilon_{0}}\right)}{n+2\alpha_1} + 
	\frac{2{\log\left(p^{c} \sqrt{n\tau_{n,p}^2}\right)}\left(|{Z_0}_j| - |Z_j|\right)}{n+2\alpha_1} \right\}\right)^{\frac{n+2\lambda_1}
		{2}} \nonumber \\ 
	&\times \left(1+ \frac{(\frac{1}{(\Sigma_0)_{j|{Z_0}_j}} - \frac{1}{(\Sigma_0)_{j|Z_j}}) - 2K_1 d \sqrt{\frac{\log p}{n}} }{\frac{1}{(\Sigma_0)_{j|Z_j}} + K_1 d \sqrt{\frac{\log p}{n}}} \right)^{- \frac{n+2\lambda_1}{2}} \nonumber \\
	\le& \left(\exp\left\{\frac{d \log\left(\frac{2}{\epsilon_{0}}\right)}{n+2\lambda_1} + 
	\frac{2{\log\left(p^{c}\sqrt{n\tau_{n,p}^2}\right)}\left(|{Z_0}_j| - |Z_j|\right)}{n+2\lambda_1} \right\}\right)^{\frac{n+2\lambda_1}
		{2}} \nonumber \\ 
	&\times \left(1+ \frac{\epsilon_{0} s_n^2 \left(|{Z_0}_j| - |Z_j|\right) - 
		2K_1d\sqrt{\frac{\log p}{n}}}{2/\epsilon_{0}} \right)^{- \frac{n+2\lambda_1}{2}}. 
	\end{align}
	\noindent
	It follows from $\frac{d\log p + d\log(n\tau_{n,p}^2)}{ns_n^2} \rightarrow 0$ and 
	$\frac{d\sqrt{\frac{\log p}{n}}}{s_n^2} \rightarrow 0$ as $n \rightarrow \infty$, and $e^x \leq 1 + 2x$ for $x < \frac{1}{2}$, that there exists $N_2''''$ 
	such that for $n \geq N_2''''$, 
	$$ \frac{\epsilon_{0} s_n^2  \left(|{Z_0}_j| - |Z_j|\right) - 
		2K_1d\sqrt{\frac{\log p}{n}}}{2/\epsilon_{0}}  \ge \frac{\epsilon_{0}s_n^2}{2}
	$$
	and 
	$$
	\exp\left\{\frac{d \log\left(\frac{2}{\epsilon_{0}}\right)}{n+2\lambda_1} + 
	\frac{2{\log\left(p^c\sqrt{n\tau_{n,p}^2}\right)}\left(|{Z_0}_j| - |Z_j|\right)}{n+2\lambda_1} \right\} \leq 1 + \frac{\epsilon_{0}^2 s_n^2}{8}. 
	$$
	\noindent
	Hence, by (\ref{pp6}), we have 
	\begin{align*}
	PR^\prime_j(Z, Z_0 ) \le \left (\frac{1+\frac{\epsilon_{0}^2}{8}s_n^2}{1+\frac{\epsilon_{0}^2}{4}s_n^2} 
	\right)^{\frac{n+2\lambda_1}{2}},
	\end{align*}	
	\noindent
	for $n \geq \max(N_2',N_2'', N_2''',N_2'''')$. Since there exists at least one $(L_0) _{ji}$ ($j +1 \le i \le p$), such that $s_n^2 \le (L_0) _{ji}^2 \le \frac{(\Omega_0)_{jj}}
	{\epsilon_{0}} \le \frac{1}{\epsilon_{0}^2}$,$\epsilon_{0}^2 s_n^2 \le 1$ and $e^{-x} \ge 1-x$ when $x \ge 0$, we have for all $n \ge N_2 \triangleq \max(N_2',N_2'', N_2''',N_2'''')$,
	\begin{align} \label{PR_upbound_subset}
	PR^\prime_j(Z, Z_0) \le \left (1 - \frac{\frac{\epsilon_{0}^2}{8}s_n^2}{1 
		+ \frac{\epsilon_{0}^2}{4}s_n^2} \right)^{\frac{n+2\lambda_1}{2}} &\le 
	\exp\left\{-\left(\frac{\frac{\epsilon_{0}^2}{8}s_n^2}{1 + \frac{\epsilon_{0}^2}{4}
		s_n^2}\right)\left(\frac{n+2\lambda_1}{2}\right)\right\} \nonumber \\
	&\le e^{-\frac{1}{10}\epsilon_{0}^2 s_n^2(\frac{n+2\lambda_1}{2})} \le p^{-\frac{2c}{\kappa}d},
	\end{align}
	following from $\frac{d\log p}{ns_n^2} \rightarrow 0$, as $n \rightarrow \infty$.
\end{proof}
\begin{lemma} \label{lm6}
	If all the active elements in set $Z_j$ are not contained in the true model ${Z_0}_j$ and all the active elements in set ${Z_0}_j$ are not contained in the true model $Z_j$, denoted as ${Z_0}_j \neq Z_j$, ${Z_0}_j \nsubseteq Z_j$, and ${Z_0}_j \nsupseteq Z_j$, then there exists $N_3$ (not depending on $Z$) such that for $n \geq N_3$ we have $PR^\prime_j(Z,Z_0) \le \left(2p\right)^{-\frac c \kappa |Z_j|} \rightarrow 0,  \mbox{ as } n \rightarrow \infty.$
\end{lemma}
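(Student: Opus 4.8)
The plan is to control $PR'_j(Z,Z_0)$ by factoring it through the \emph{union} model $W$ defined column-wise by $W_j = Z_j \cup {Z_0}_j$. Every factor appearing in the expression for $PR'_j$ in Lemma \ref{newlemma1} is a ratio of a quantity evaluated at the $Z$-configuration to the same quantity evaluated at the reference configuration, so inserting $W$ gives the exact identity
\begin{equation*}
PR'_j(Z,Z_0) = PR'_j(Z,W)\cdot PR'_j(W,Z_0),
\end{equation*}
where $PR'_j(\cdot,\cdot)$ denotes the same functional with its second argument as the reference model. The reason for this split is that, since $Z_j$ and ${Z_0}_j$ are incomparable, the direct residual-variance factor $(\tilde S_{j|{Z_0}_j}/\tilde S_{j|Z_j})^{n/2}$ need not be at most $1$: spurious edges in $Z_j$ can make $\tilde S_{j|Z_j}$ smaller than $\tilde S_{j|{Z_0}_j}$ and blow this term up. Routing through $W$ repairs this, because $Z_j\subset W_j$ and ${Z_0}_j\subset W_j$ turn each factor into one of the two already-analyzed monotone cases.

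First I would dispatch the over-fitting factor. Since ${Z_0}_j\subseteq W_j$ with $|W_j|-|{Z_0}_j| = |Z_j\setminus {Z_0}_j|\ge 1$, the pair $(W,Z_0)$ satisfies the hypotheses of Lemma \ref{lm4}, and that lemma applies verbatim to give, for $n$ large,
\begin{equation*}
PR'_j(W,Z_0) \le (2p)^{-\frac{c}{\kappa}(|W_j|-|{Z_0}_j|)} = (2p)^{-\frac{c}{\kappa}|Z_j\setminus {Z_0}_j|}.
\end{equation*}

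Next I would treat the under-fitting factor $PR'_j(Z,W)$, where now $Z_j\subsetneq W_j$, by replaying the proof of Lemma \ref{lm5} with $W$ in the role of $Z_0$. Two facts make this transfer go through. Because $Z_j\subset W_j$, the sample residual variances obey $\tilde S_{j|Z_j}\ge \tilde S_{j|W_j}$, so the $(n/2)$-power factor is genuinely $\le 1$; and because $W_j\supseteq {Z_0}_j$ we have $(\Sigma_0)_{j|W_j} = {d_0}_j$ exactly (conditioning on any superset of the true parents returns the true conditional variance). The missing edges $W_j\setminus Z_j = {Z_0}_j\setminus Z_j$ are genuine edges of $L_0$, so Proposition 5.2 of \citep*{CKG:2017} yields the same population gap used in \eqref{pp6}--\eqref{PR_upbound_subset},
\begin{equation*}
\frac{1}{(\Sigma_0)_{j|W_j}} - \frac{1}{(\Sigma_0)_{j|Z_j}} \ge \epsilon_0\, s_n^2\,(|W_j|-|Z_j|).
\end{equation*}
Since $|W_j|-|Z_j| = |{Z_0}_j\setminus Z_j|\le d$, the chi-square concentration in \eqref{chisquarebound_S_D}--\eqref{chisquarebound_S_D_D_0} and the rate conditions in Assumptions \ref{assumption:s_n}--\ref{assumption:tau} enter exactly as before, and the verbatim argument delivers $PR'_j(Z,W)\le p^{-\frac{2c}{\kappa}d}$ for $n$ large.

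Finally I would combine the two bounds. Using $p\ge 2$ and $|Z_j\cap {Z_0}_j|\le d$ one checks $p^{-\frac{2c}{\kappa}d}\le (2p)^{-\frac{c}{\kappa}|Z_j\cap {Z_0}_j|}$, so that
\begin{equation*}
PR'_j(Z,Z_0) \le (2p)^{-\frac{c}{\kappa}|Z_j\cap {Z_0}_j|}\,(2p)^{-\frac{c}{\kappa}|Z_j\setminus {Z_0}_j|} = (2p)^{-\frac{c}{\kappa}|Z_j|},
\end{equation*}
which is the claim, with $N_3$ the maximum of the thresholds from the two constituent estimates. I expect the under-fitting factor to be the main obstacle: one must confirm that Proposition 5.2 of \citep*{CKG:2017} supplies the residual-variance gap even though $Z_j$ may carry spurious edges (so $Z_j$ is not a subset of the true parent set and its extra conditioning could in principle shrink $(\Sigma_0)_{j|Z_j}-{d_0}_j$), and that this gap is inherited uniformly by the sample quantities on $C_n^c$. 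The union device is precisely what reduces this to the monotone subset case of Lemma \ref{lm5} while keeping every residual-variance ratio at most $1$.
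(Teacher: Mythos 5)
Your proposal follows the same telescoping strategy as the paper---insert a bridge model between $Z$ and $Z_0$, factor $PR^\prime_j$ through it exactly, and invoke the two monotone cases---but with the dual choice of bridge: you route through the union $W_j = Z_j \cup {Z_0}_j$, whereas the paper routes through the intersection $Z^*_j = Z_j \cap {Z_0}_j$, bounding $PR^\prime_j(Z,Z^*)$ by the Lemma \ref{lm4} argument and $PR^\prime_j(Z^*,Z_0)$ by the Lemma \ref{lm5} argument; the closing arithmetic (using $|Z_j\cap {Z_0}_j|\le d$ and $p\ge 2$) is identical in both. The difference is not cosmetic, and in one respect your route is sounder than the paper's: in your factorization the over-fitting factor $PR^\prime_j(W,Z_0)$ has the \emph{true} model as its reference, so the chi-square inputs to Lemma \ref{lm4} (namely (\ref{chisquarebound_S_D})--(\ref{chisquarebound_S_D_D_0}), which require the larger conditioning set to contain the true parents) are genuinely available. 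In the paper's factorization the reference $Z^*$ of the over-fitting factor misses true parents, so $n{d_0}_j^{-1}S_{j|Z^*_j}$ is not $\chi^2_{n-|Z^*_j|}$-distributed, and since spurious variables in $Z_j$ can proxy for the missing true parents, $(\Sigma_0)_{j|Z^*_j}-(\Sigma_0)_{j|Z_j}$ can be of constant order and $PR^\prime_j(Z,Z^*)$ exponentially large; your route avoids exactly this looseness. Also, the worry you flag about Proposition 5.2 of \citep*{CKG:2017} is resolvable: the gap $1/{d_0}_j - 1/(\Sigma_0)_{j|Z_j} \gtrsim s_n^2\,|{Z_0}_j\setminus Z_j|$ (up to constants depending on $\epsilon_0$) survives spurious conditioning, because $(\Sigma_0)_{j|Z_j}-{d_0}_j$ is a quadratic form of the missing true coefficients in $\mathrm{Var}\bigl(Y_{{Z_0}_j\setminus Z_j}\mid Y_{Z_j}\bigr)$, whose eigenvalues are at least $\epsilon_0$ under Assumption 1.

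The genuine gap is your claim that the proof of Lemma \ref{lm5} "delivers verbatim" the bound $PR^\prime_j(Z,W)\le p^{-\frac{2c}{\kappa}d}$. That proof leans repeatedly on the reference model having at most $d$ entries: the perturbation bounds (\ref{pp1}) and (\ref{inverse_s_stilde}), and the Schur-complement eigenvalue bound used for the determinant ratio, all carry a factor of the reference's size times $\sqrt{\log p/n}$ or divided by $n\tau_{n,p}^2$, and they are $o(1)$ only because $d\sqrt{\log p/n}\to 0$ (Assumption \ref{assumption:d_n}). Your reference can have $|W_j|$ as large as $R_n+d$ with $R_n\sim n/\log n$ (Assumption \ref{assumption:Z}), and $R_n\sqrt{\log p/n}\to\infty$, so none of those estimates survive the substitution. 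The argument is repairable, but the repairs must be stated: for the numerator of the power term use monotonicity, $\tilde S_{j|W_j}\le \tilde S_{j|{Z_0}_j}$ (since ${Z_0}_j\subset W_j$), and then control $\tilde S_{j|{Z_0}_j}$ exactly as the paper does for a size-$d$ reference, or alternatively use the chi-square concentration for $S_{j|W_j}$ as in Lemma \ref{lm4}, noting the deflation $(n-|W_j|)/n\le 1$ only helps an upper bound; for the determinant term, note $\bigl(|\tilde S_{W}^{\ge j}|/|\tilde S_{Z}^{\ge j}|\bigr)^{1/2}$ is the determinant of a Schur complement of dimension $|W_j|-|Z_j|\le d$, which is dominated by the corresponding diagonal block of $\tilde S_{W}^{\ge j}$, whose eigenvalues are at most $2/\epsilon_0$ on $C_n^c$. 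In short, the union bridge buys distributional correctness for both factors but forces you to rework the size-dependent steps, while the paper's intersection bridge keeps both references of size at most $d$ at the cost of applying Lemma \ref{lm4} outside its legitimate scope; your write-up elides the former cost.
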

\begin{proof} [Proof of Lemma \ref{lm6}]
	Let $Z^*$ be an arbitrary 0-1 matrix satisfying $Z^*_j = Z_j \cap {Z_0}_j$. Immediately we get $pa_i({{\mathscr{D}}^*} ) \subset pa_i({\mathscr{D}} _0)$ and $pa_i({{\mathscr{D}}^*} ) \subset pa_i({\mathscr{D}})$. It follows from (\ref{m5}) that
	\begin{align} \label{pp3_scale}
	PR^\prime_j(Z, Z_0) =&  (n\tau^2)^{-\frac{|Z_j| - |{Z_0}_j|}2} \frac{B(\alpha_1+ |Z_j|, _2)}{B(\alpha_1+ |{Z_0}_j|, \alpha_2)}\frac{|\tilde{S}_{Z_0}^{\ge j}|^{\frac12}}{|\tilde{S}_{Z}^{\ge j}|^{\frac12}}\nonumber\\
	&\times \left(\frac{\tilde{S}_{j|{Z_0}_j}}{\tilde{S}_{j|Z_j}}\right)^{\frac 1 2}\left(\frac{\tilde S_{j|{Z_0}_j}- \frac 1 {n\tau_{n,p}^2} + \frac{2\lambda_2}{n}}{\tilde S_{j|{Z}_j} - \frac 1 {n\tau_{n,p}^2}+ \frac{2\lambda_2}{n}}\right)^{\frac n 2 + \lambda_1} \nonumber \\
	\le&  (n\tau^2)^{-\frac{|Z_j| - |Z^*_j|}2} \frac{B(\alpha_1+ |Z_j|, \alpha_2)}{B(\alpha_1+ |Z^*_j|, \alpha_2)}
	\frac{|\tilde{S}_{Z^*}^{\ge j}|^{\frac12}}{|\tilde{S}_{Z}^{\ge j}|^{\frac12}} \nonumber\\
	&\times \left(\frac{\tilde S_{j|{Z_0}_j}- \frac 1 {n\tau_{n,p}^2}+ \frac{2\lambda_2}{n}}{\tilde S_{j|Z^*_j} - \frac 1 {n\tau_{n,p}^2} + \frac{2\lambda_2}{n}}\right)^{\frac n 2 + \lambda_1} \nonumber\\ 
	&\times (n\tau^2)^{-\frac{|Z_j^*| - |{Z_0}_j|}2} \frac{B(\alpha_1+ |Z^*_j|, \alpha_2)}{B(\alpha_1+ |{Z_0}_j|, \alpha_2)}
	\frac{|\tilde{S}_{Z_0}^{\ge j}|^{\frac12}}{|\tilde{S}_{Z^*}^{\ge j}|^{\frac12}}  \nonumber\\
	&\times  \left(\frac{\tilde S_{j|Z^*_j}- \frac 1 {n\tau_{n,p}^2}+ \frac{2\lambda_2}{n}}{\tilde S_{j|{Z}_j} - \frac 1 {n\tau_{n,p}^2}+ \frac{2\lambda_2}{n}}\right)^{\frac n 2 + \lambda_1} \nonumber \\ 
	\le& PR^\prime_j(Z, Z^*)\times PR^\prime_j(Z^*, Z_0).	
	\end{align}
	\noindent
	Note that $Z^*_j \subset Z_j$. It follows from (\ref{PR_upbound_1_scale}) that 
	\begin{align} \label{PR_neq_1_scale} 
	PR^\prime_j(Z, Z^*) \le \left(2p\right)^{-\frac c \kappa (|Z_j| - |Z^*_j|)}, \mbox{ for some } \kappa > 1 \mbox{ and } n\ge N_4.
	\end{align}
	By (\ref{PR_upbound_subset_scale}) and $Z^*_j \subset {Z_0}_j$, we have
	\begin{equation} 
	PR^\prime_j(Z^*, Z_0)  \le  p^{-\frac{2c}{\kappa}d},
	\mbox{ for } n \ge N_5. 
	\end{equation}
	\noindent
	It follows from (\ref{pp3}) and $|Z^*_j| < d$ that
	\begin{align} \label{PR_upbound_neq_scale}
	PR^\prime_j(Z, Z_0) \le \left(2p\right)^{-\frac c \kappa |Z_j| - |Z^*_j|} p^{-\frac{2c}{\kappa}d} <  \left(2p\right)^{-\frac c \kappa |Z_j|}, \mbox{ for } n \ge N_3 = \max\left\{N_1,N_2\right\}.
	\end{align}
\end{proof}
The result of Theorem \ref{thm4} immediately follows from Lemma \ref{lm4} to Lemma \ref{lm6}, by noting that if $Z \neq Z_0$, then there exists at least one $j$, such that $Z_j \neq {Z_0}_j$. It follows that if we restrict to 
$C_n^c$, then 
\begin{equation} \label{pp6.1}
\max_{Z \neq Z_0} \frac{\pi(Z|\bm{Y})}
{\pi(Z_0 |\bm{Y})} \leq \max_{Z \neq Z_0}
\prod_{j=1}^{p}PR_j(Z,Z_0) \rightarrow 0, \quad \mbox{as } n \rightarrow \infty,
\end{equation}
which completes our proof of Theorem \ref{thm4}.

\subsection{Proof of Theorem \ref{thm5}} \label{sec:proof_thm5}
We now move on to the proof of Theorem \ref{thm5}. By Lemmas \ref{lm4} - \ref{lm6}, it follows that if we 
restrict to $C_n^c$, then for large enough constant $N > N_3$, we have
\begin{align} \label{thm3_proof}
&\frac{1 - \pi(Z_0 | {\bm Y})}{\pi(Z_0 |{\bm Y})}  \nonumber\\
=&
\sum_{Z \neq Z_0} \frac{\pi(Z|\bm Y)}{\pi(Z_0|\bm Y)} \nonumber \\
\le&  \sum_{j = 1}^{p-1} \sum_{Z_j \neq {Z_0}_j}\frac{\pi(Z|Y)}{\pi(Z_0|\bm Y)} \nonumber \\
\le& \sum_{j = 1}^{p-1} \left(\sum_{Z_j \subset {Z_0}_j}\frac{\pi(Z|\bm Y)}{\pi(Z_0|\bm Y)} + \sum_{Z_j \supset {Z_0}_j}\frac{\pi(Z|\bm Y)}{\pi(Z_0|\bm Y)} + \sum_{Z_j \nsubseteq {Z_0}_j}\frac{\pi(Z|\bm Y)}{\pi(Z_0|\bm Y)} \right) \nonumber \\
\le&  \sum_{j = 1}^{p-1} \bigg(\sum_{\lvert Z_j\rvert = 1}^{ \lvert{Z_0}_j\rvert - 1}\binom{\lvert{Z_0}_j\rvert}{\lvert Z_j \rvert }  p^{-\frac{2c}{\kappa}d} + \sum_{\lvert Z_j\rvert = \lvert {Z_0}_j\rvert}^{R_{n}} \binom{p - \lvert{Z_0}_j\rvert}{\lvert Z_j \rvert - \lvert {Z_0}_j \rvert}  \left(2p\right)^{-\frac c \kappa (|Z_j| - |Z^*_j|)}\nonumber\\
& \qquad + \sum_{\lvert Z_j\rvert = 1 }^{R_{n}} \binom{p}{\lvert Z_j \rvert}  \left(2p\right)^{-\frac c \kappa |Z_j|} \bigg).
\end{align}
Further note that  the upper bound of the binomial coefficient satisfies$\binom {p} {k} \le p^{k}$, for any $1 \le k \le p$. It follows that when $c > 2\kappa$ for some $\kappa > 1$, $$\frac{1 - \pi(Z_0 | {\bm Y})}{\pi(Z_0 |{\bm Y})} \rightarrow 0,\quad \mbox{as } n \rightarrow \infty.$$
Therefore, ${\pi(Z_0 |{\bm Y})} \rightarrow 1$, as $n \rightarrow \infty,$
which completes our proof of the strong model selection result in Theorem \ref{thm5}.

\subsection{Proof of Theorems \ref{thm1} and \ref{thm3}} \label{sec:proof_thm13}
The proof of Theorem \ref{thm1} will also be broken into several steps. We begin proving our posterior ratio consistency result by first proving the Lemma \ref{newlemma} which gives the closed form of the marginal posterior density up to a constant.
\begin{proof} [Proof of Lemma \ref{newlemma}]
	Note that following from model (\ref{model5}) and (\ref{model6}), under the beta-mixture prior, we have 
	\begin{align} \label{marginal_Z}
	\pi (Z) =& \int \pi(q) \prod_{(j,k):1 \leq j < k \leq p} q^{Z_{kj}} \left(1-q 
	\right)^{1-Z_{kj}} dq \nonumber\\
	\propto & \int  \prod_{j=1}^{p-1} q^{\alpha_1 + |Z_j| - 1} (1-q)^{\alpha_2 + p-j-
		|Z_j| - 1} dq \nonumber \\
	\propto& B\left(\alpha_1(p-1)+ \sum_{j = 1}^{p-1}|Z_j|, \alpha_2(p-1) + \frac{p(p-1)}{2}-
	\sum_{j = 1}^{p-1}|Z_j|\right),
	\end{align}
	where 
	\begin{align*}
	&B\left(\alpha_1(p-1)+ \sum_{j = 1}^{p-1}|Z_j|, \alpha_2(p-1) + \frac{p(p-1)}{2}-
	\sum_{j = 1}^{p-1}|Z_j|\right) \\
	=& \frac{\Gamma(\alpha_1(p-1)+ \sum_{j = 1}^{p-1}|Z_j|)\Gamma(\alpha_2(p-1) + \frac{p(p-1)}{2}-
		\sum_{j = 1}^{p-1}|Z_j|)}{\Gamma((\alpha_1 + \alpha_2)(p-1) + \frac{p(p-1)}{2})}.
	\end{align*}
	Similar to the argument in (\ref{posterior1}) and (\ref{posterior2}), integrating out $(L,D)$ gives us
	\begin{align} \label{posterior_propto1}
	&\pi(Z | \bm Y)\nonumber\\
	\propto& B\left(\alpha_1(p-1)+ \sum_{j = 1}^{p-1}|Z_j|, \alpha_2(p-1) + \frac{p(p-1)}{2}-
	\sum_{j = 1}^{p-1}|Z_j|\right) \nonumber\\
	&\times \prod_{j = 1}^{p-1}  \left(\frac{n\tilde S_{j|Z_j}}{2}- \frac 1 {2\tau^2} + \lambda_2\right)^{-\frac n 2 - \lambda_1}  \frac{|\tilde S_Z^{>j}|^{-\frac 1 2}}{(n\tau^2)^{|Z_j|/2}}\nonumber\\ 
	= & B\left(\alpha_1(p-1)+ \sum_{j = 1}^{p-1}|Z_j|, \alpha_2(p-1) + \frac{p(p-1)}{2}-
	\sum_{j = 1}^{p-1}|Z_j|\right) \nonumber\\
	&\times \prod_{j = 1}^{p-1} \left(\frac{n\tilde S_{j|Z_j}}{2} - \frac 1 {2\tau^2} + \lambda_2\right)^{-\frac n 2 - \lambda_1} \frac{\left(|\tilde S_{Z}^{\ge i}|\tilde{S}_{j|Z_j}\right)^{-\frac 1 2}}{(n\tau^2)^{|Z_j|/2}},
	\end{align}
	in which $\tilde{S}_{j|Z_j} = \tilde{S}_{jj} - (\tilde{S}_{Z \cdot j}^>)^T 
	(\tilde{S}_Z^{>j})^{-1} \tilde{S}_{Z \cdot j}^>$. 
\end{proof}
Now we are interested in obtaining the posterior ratio. It immediately follows from Lemma \ref{newlemma} that, given the data $\bm Y$, the posterior ratio for any $Z$ compared to $Z_0$ can be simplified as
\begin{align} \label{m5}
&\frac{\pi({Z}|\bm{Y})}{\pi({Z}_0|\bm{Y})} \nonumber\\
=& \frac{B\left(\alpha_1(p-1)+ \sum_{j = 1}^{p-1}|Z_j|, \alpha_2(p-1) + \frac{p(p-1)}{2}-
	\sum_{j = 1}^{p-1}|Z_j|\right)}{B\left(\alpha_1(p-1)+ \sum_{j = 1}^{p-1}|{Z_0}_j|, \alpha_2(p-1) + \frac{p(p-1)}{2}-
	\sum_{j = 1}^{p-1}|{{Z_0}_j|}\right)}\nonumber \\
&\times  \prod_{j=1}^{p-1} (n\tau^2)^{-\frac{|Z_j| - |{Z_0}_j|}2} \frac{|\tilde{S}_{Z_0}^{\ge j}|^{\frac12}}{|\tilde{S}_{Z}^{\ge j}|^{\frac12}}\left(\frac{\tilde{S}_{j|{Z_0}_j}}{\tilde{S}_{j|Z_j}}\right)^{\frac 1 2} \left(\frac{\tilde S_{j|{Z_0}_j}- \frac 1 {n\tau_{n,p}^2}+ \lambda_2}{\tilde S_{j|{Z}_j} - \frac 1 {n\tau_{n,p}^2}+ \lambda_2}\right)^{\frac n 2 + \lambda_1}.
\end{align} 
We begin by simplifying the posterior ratio given in (\ref{m5}). Using the fact that 
$
\sqrt{x + \frac{1}{4}} \leq \frac{\Gamma (x+1)}{\Gamma \left( x+\frac{1}{2} \right)} 
\leq \sqrt{x + \frac{1}{2}} 
$
for $x > 0$ (see \citep*{Watson:1959}), it follows from Assumption \ref{assumption:Z}, and $1 + x \le e^x$, $1 - x \le e^{-x}$, for $0 \le x \le 1$, that for a large enough constant $M$, 
and large enough $n$, we have  
\begin{align} \label{ratio_B}
&\frac{B\left(\alpha_1(p-1)+ \sum_{j = 1}^{p-1}|Z_j|, \alpha_2(p-1) + \frac{p(p-1)}{2}-
	\sum_{j = 1}^{p-1}|Z_j|\right)}{B\left(\alpha_1(p-1)+ \sum_{j = 1}^{p-1}|{Z_0}_j|, \alpha_2(p-1) + \frac{p(p-1)}{2}-
	\sum_{j = 1}^{p-1}|{{Z_0}_j|}\right)} \nonumber\\
=& \frac{\Gamma(\alpha_1(p-1)+ \sum_{j = 1}^{p-1}|Z_j|)\Gamma(\alpha_2(p-1) + \frac{p(p-1)}{2}-
	\sum_{j = 1}^{p-1}|Z_j|)}{\Gamma(\alpha_1(p-1)+ \sum_{j = 1}^{p-1}|{Z_0}_j|)\Gamma(\alpha_2(p-1) + \frac{p(p-1)}{2}-
	\sum_{j = 1}^{p-1}|{Z_0}_j|)} \nonumber\\
\le& \prod_{j = 1}^{p-1}M^{\lvert|Z_j| - |{Z_0}_j|\rvert}\left(\frac{\alpha_2 + p/2 - \frac{\sum_{j = 1}^{p-1}|Z_j|}{p-1}}{\alpha_1+\frac{\sum_{j = 1}^{p-1}|Z_j|}{p-1}}\right)^{-(|Z_j| - |{Z_0}_j|)},
\end{align}
for some constant $M > 0$. \\
Therefore, the posterior ratio in (\ref{m5}) can be bounded above by
\begin{align}
&\frac{\pi({Z}|\bm{Y})}{\pi({Z}_0|\bm{Y})} \nonumber\\
\le &  \prod_{j = 1}^{p-1}M^{\lvert|Z_j| - |{Z_0}_j|\rvert}\left(\frac{\alpha_1+\frac{\sum_{j = 1}^{p-1}|Z_j|}{p-1}}{\alpha_2 + p/2 - \frac{\sum_{j = 1}^{p-1}|Z_j|}{p-1}}\right)^{-(|Z_j| - |{Z_0}_j|)} (n\tau^2)^{-\frac{|Z_j| - |{Z_0}_j|}2}  \nonumber \\
&\times \frac{|\tilde{S}_{Z_0}^{\ge j}|^{\frac12}}{|\tilde{S}_{Z}^{\ge j}|^{\frac12}}\left(\frac{\tilde{S}_{j|{Z_0}_j}}{\tilde{S}_{j|Z_j}}\right)^{\frac 1 2} \left(\frac{\tilde S_{j|{Z_0}_j}- \frac 1 {n\tau_{n,p}^2}+ \lambda_2}{\tilde S_{j|{Z}_j} - \frac 1 {n\tau_{n,p}^2}+ \lambda_2}\right)^{\frac n 2 + \lambda_1}\nonumber \\
\triangleq& PR_j(Z,Z_0).
\end{align}
We now analyze the behavior of $PR_j(Z,Z_0)$ defined in (\ref{m5}) under different scenarios in a sequence of three lemmas (Lemmas \ref{lm1} - \ref{lm3}). Recall that our 
goal is to find an upper bound for 
$PR_j(Z,Z_0)$, such that the upper bound converges to $0$ as $n 
\rightarrow \infty$. For all the following analyses, we will restrict ourselves to the event $C_n^c$. 
\begin{lemma} \label{lm1}
	If all the active elements in set ${Z_j}_0$ are contained in the true model ${Z}_j$ denoted as $Z_j \supset {Z_0}_j$, then there exists $N_4$ (not depending on $Z$) such that for $n \geq N_4$ we have for some constant $\kappa > 1$, $PR_j(Z,Z_0) \le \left(2p\right)^{-\frac {\max\{c,1\}} \kappa (|Z_j| - |{Z_0}_j|)}  \rightarrow 0,  \mbox{ as } n \rightarrow \infty.$
\end{lemma}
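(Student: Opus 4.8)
\emph{Overall strategy.} The proof will follow the exact architecture of the proof of Lemma~\ref{lm4}: the Cholesky/likelihood portion of the per-column ratio is the same in the beta-mixture and multiplicative settings, and the only genuinely new ingredient is the prior-ratio factor, which now enters through (\ref{ratio_B}) rather than through the per-column Beta function ratio (\ref{ratio_B_scale}). I would therefore start from the bound on $PR_j(Z,Z_0)$ derived immediately before this lemma, restrict throughout to the event $C_n^c$ from (\ref{smplbound1}), and bound its factors one at a time. Since $Z_j \supset {Z_0}_j$ we have $|Z_j|-|{Z_0}_j|>0$, which is what makes each factor amenable to the estimates below.

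\emph{The prior ratio and the origin of $\max\{c,1\}$.} The first step, and the only one that genuinely differs from Lemma~\ref{lm4}, is to read off the order of the prior-ratio factor in (\ref{ratio_B}). Writing $\bar Z = (p-1)^{-1}\sum_{j}|Z_j|$, the relevant per-column factor is $(\alpha_2 + p/2 - \bar Z)/(\alpha_1 + \bar Z)$. By Assumption~\ref{assumption:alpha_2} we have $\alpha_2 \sim p^c$ with $\alpha_1 = O(1)$, while Assumption~\ref{assumption:Z} forces $\bar Z \le R_n = o(p)$; hence the numerator is of order $\max\{p^c,\,p/2\} = p^{\max\{c,1\}}$ and the denominator is of strictly smaller order than $p$. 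This produces a prior-ratio factor of order $p^{-\max\{c,1\}(|Z_j|-|{Z_0}_j|)}$ up to sub-polynomial corrections that are absorbed into the $\kappa$-slack. It is precisely the additive $p/2$ term (the per-column edge count, which is absent from the multiplicative prior) that upgrades the rate $c$ of Lemma~\ref{lm4} to $\max\{c,1\}$, and this is the decaying factor that drives the whole bound.

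\emph{Re-using the Lemma~\ref{lm4} estimates, and the main obstacle.} The remaining factors are handled verbatim as in Lemma~\ref{lm4}. Because $Z_j \supset {Z_0}_j$, I would write $|\tilde S_Z^{\ge j}| = |\tilde S_{Z_0}^{\ge j}|\,|SC|$ for the appropriate Schur complement and bound the largest eigenvalue of $SC^{-1}$ by $n\tau^2$ using $\tilde S_Z^{\ge j} \ge (n\tau^2)^{-1} I$, giving the determinant bound (\ref{new7}); this cancels the $(n\tau^2)^{-(|Z_j|-|{Z_0}_j|)/2}$ factor. The conditional-variance ratio $\tilde S_{j|{Z_0}_j}/\tilde S_{j|Z_j}$ and the $(n/2+\lambda_1)$-th power term are then controlled on $C_n^c$ by the closeness estimates (\ref{inverse_s_sigma})--(\ref{inverse_s_stilde}), the chi-squared concentration inequalities (\ref{chisquarebound_S_D})--(\ref{chisquarebound_S_D_D_0}), and Assumptions~\ref{assumption:d_n} and~\ref{assumption:tau}, which together bound the power term by $\exp\{O(|Z_j|-|{Z_0}_j| + \sqrt{(|Z_j|-|{Z_0}_j|)\log p} + d/\tau^2)\}$, sub-polynomial in $p$ per added edge. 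Collecting all factors, the prior decay dominates and yields $PR_j(Z,Z_0) \le (2p)^{-\frac{\max\{c,1\}}{\kappa}(|Z_j|-|{Z_0}_j|)}$ for some $\kappa > 1$. I expect the main obstacle to be bookkeeping rather than conceptual: one must verify via Assumption~\ref{assumption:tau} that the $\sqrt{n\tau^2}$ factors surfacing from the slab normalization and the determinant bound combine so that the net polynomial order per added edge is exactly $p^{\max\{c,1\}}$ and is not swamped by the growing $(n\tau^2)$ terms, and—most delicately—that the prior-ratio order is controlled \emph{uniformly} over all admissible $Z$ (including dense ones with $\bar Z$ close to $R_n$), so that the resulting threshold $N_4$ does not depend on the particular model.
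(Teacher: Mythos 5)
Your proposal is correct and follows essentially the same route as the paper's own proof: bound the prior-ratio factor via Assumptions \ref{assumption:Z} and \ref{assumption:alpha_2} to extract the per-edge rate $p^{\max\{c,1\}}$ (exactly the paper's step (\ref{ratio_B_lemma1}), where the additive $p/2$ term produces $\max\{c,1\}$), and then reuse verbatim the Lemma \ref{lm4} machinery — the Schur-complement determinant bound (\ref{new7}), the chi-squared concentration bounds, and Assumption \ref{assumption:tau} to control the $\sqrt{n\tau^2}$ factors. Your closing caveat about the $(n\tau^2)$ bookkeeping is precisely how the paper closes the argument in (\ref{PR_upbound_1}), so there is no substantive difference in approach.
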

\begin{proof} [Proof of Lemma \ref{lm1}]
	We begin by simplifying the posterior ratio given in (\ref{m5}). It follows from Assumption \ref{assumption:alpha_2},$|Z_j| > |{Z_0}_j|$,  that for a large enough constant $M$, 
	and large enough $n$, we have  
	\begin{align} \label{ratio_B_lemma1}
	&M^{\lvert|Z_j| - |{Z_0}_j|\rvert}\left(\frac{\alpha_2 + p/2 - \frac{\sum_{j = 1}^{p-1}|Z_j|}{p-1}}{\alpha_1+\frac{\sum_{j = 1}^{p-1}|Z_j|}{p-1}}\right)^{-(|Z_j| - |{Z_0}_j|)}  \nonumber\\
	\le& (c_1p^{\max\{c,1\}}/n)^{-(|Z_j| - |{Z_0}_j|)},
	\end{align}
	for some constant $c_1 > 0$.\\
	Following the similar arguments leading up to (\ref{PR_upbound_1}), by Assumption \ref{assumption:Z}, Assumption \ref{assumption:tau}, for larger enough $n \ge N_4$, we have
	\begin{align} \label{PR_upbound_1}
	&PR_j(Z,Z_0) \nonumber \\
	\le& (c_1\sqrt{n\tau^2}p^{\max\{c,1\}}/n)^{-(|Z_j| - |{Z_0}_j|)}\left(1 + \frac{n{d_0}_j^{-1}{S}_{j|{Z_0}_j} - n{d_0}_j^{-1}S_{j|{Z_0}_j} + c_1 \frac d {{d_0}_j\tau_{n,p}^2}}{n{d_0}_j^{-1}S_{j|{Z}_j}}\right)^{\frac 1 2}\nonumber \\ 
	&\times \left(1 + \frac{n{d_0}_j^{-1}{S}_{j|{Z_0}_j} - n{d_0}_j^{-1}S_{j|{Z_0}_j} + c_1 \frac d {{d_0}_j\tau_{n,p}^2}}{n{d_0}_j^{-1}S_{j|{Z}_j}}\right)^{\frac n 2 + \lambda_1}\nonumber \\ 
	\le & (2p^{-{\max\{c,1\}}})^{|Z_j| - |{Z_0}_j|}\left(\sqrt{\frac {\tau^2} n} \log n\right)^{-(|Z_j| - |{Z_0}_j|)} \nonumber\\
	&\times  \exp\left\{\frac{|Z_j| - |{Z_0}_j| + \sqrt{(|Z_j| - |{Z_0}_j|)\log p} + c_1\frac d {\tau_{n,p}^2}}{n - |Z_j| -  \sqrt{(n - |{Z}_j|)\log p}} \times (\frac {n+1} 2 + \lambda_1) \right\}\\ \nonumber
	\le & \left(2p\right)^{-\frac {\max\{c,1\}} \kappa (|Z_j| - |{Z_0}_j|)}, \mbox{ for some constant } \kappa > 1.
	\end{align}
	The second inequality follows from $\frac{d}{\tau_{n,p}^2\log p} \rightarrow 0$, as $n \rightarrow \infty$.
\end{proof}
\begin{lemma} \label{lm2}
	If all the active elements in set $Z_j$ are contained in the true model ${Z_0}_j$ denoted as $Z_j \subset {Z_0}_j$, then there exists $N_5$ (not depending on $Z$) such that for $n \geq N_5$, we have $PR_j(Z,Z_0) \le p^{-\frac{2c}{\kappa}d} \rightarrow 0,  \mbox{ as } n \rightarrow \infty.$
\end{lemma}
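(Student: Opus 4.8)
The plan is to follow the blueprint of the proof of Lemma \ref{lm5}, since the only structural difference between $PR_j(Z,Z_0)$ for the beta-mixture prior and $PR'_j(Z,Z_0)$ for the multiplicative prior lies in the algebraic form of the prior-ratio factor. When $Z_j \subset {Z_0}_j$ we have $|Z_j| < |{Z_0}_j|$, so the exponent $-(|Z_j| - |{Z_0}_j|) = |{Z_0}_j| - |Z_j|$ is positive and the prior term in (\ref{ratio_B}) is a \emph{growing} factor; the strategy is to show that the likelihood factor, driven by the beta-min condition, decays exponentially fast enough to override this growth uniformly over all such $Z$.

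First I would isolate the prior contribution. Using the bound (\ref{ratio_B}) already established, together with Assumption \ref{assumption:alpha_2} ($\alpha_2 \sim p^c$, $\alpha_1$ bounded) and Assumption \ref{assumption:Z} (so that $\overline{|Z|} := \sum_{j=1}^{p-1}|Z_j|/(p-1) \le R_n$), the factor $M^{|{Z_0}_j|-|Z_j|}\big(\frac{\alpha_2 + p/2 - \overline{|Z|}}{\alpha_1 + \overline{|Z|}}\big)^{|{Z_0}_j|-|Z_j|}$ is bounded above by $(c_1 p^{\max\{c,1\}})^{|{Z_0}_j|-|Z_j|}$ for some constant $c_1 > 0$, since the numerator is of order $p^{\max\{c,1\}}$ and the denominator is bounded below by $\alpha_1$.

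Next I would reproduce the determinant and conditional-variance estimates exactly as in Lemma \ref{lm5}. The Schur complement identity $|\tilde S_{Z_0}^{\ge i}| = |\tilde S_Z^{\ge i}|\,|SC_{\tilde S_Z^{\ge i}}|$, combined with the eigenvalue closeness bound (\ref{pp1}) on $C_n^c$, yields $(|\tilde S_{Z_0}^{\ge i}|/|\tilde S_Z^{\ge i}|)^{1/2} \le (\epsilon_0/2)^{-(|{Z_0}_j|-|Z_j|)/2}$ for large $n$, while the inclusion $Z_j \subset {Z_0}_j$ gives $\tilde S_{j|{Z_0}_j} \le \tilde S_{j|Z_j}$. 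Invoking Proposition 5.2 of \citep{CKG:2017} to quantify the signal gap $\frac{1}{(\Sigma_0)_{j|{Z_0}_j}} - \frac{1}{(\Sigma_0)_{j|Z_j}} \ge \epsilon_0 s_n^2(|{Z_0}_j|-|Z_j|)$, the factor raised to the power $\tfrac{n}{2}+\lambda_1$ contributes a term of order $\exp\{-\tfrac{1}{10}\epsilon_0^2 s_n^2 \tfrac{n+2\lambda_1}{2}\}$, mirroring (\ref{PR_upbound_subset}).

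Finally I would combine the growing prior factor with the decaying likelihood factor. Collecting the $\sqrt{n\tau^2}$ contribution, the polynomial prior growth, and the exponential, Assumption \ref{assumption:s_n} ($d\log p/(ns_n^2)\to 0$) together with Assumption \ref{assumption:tau} ensure the exponential decay dominates, so that $PR_j(Z,Z_0) \le p^{-2cd/\kappa}$ for all $n \ge N_5$ and some $\kappa>1$, uniformly in $Z$. The main obstacle, relative to the multiplicative case, is that the beta-mixture Beta-function ratio couples all columns through the aggregate $\sum_j |Z_j|$ rather than factorizing column-by-column, so care is needed in passing through (\ref{ratio_B}) to bound the per-column factor without losing the required rate; once that per-column bound is secured, the crux is verifying that the beta-min–driven exponential term beats the polynomial prior growth $p^{\max\{c,1\}(|{Z_0}_j|-|Z_j|)}$, which is precisely what Assumption \ref{assumption:s_n} is designed to deliver.
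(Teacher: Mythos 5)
Your proposal is correct and follows essentially the same route as the paper: bound the Beta-function prior ratio by a polynomial factor $(c_1 p^{\max\{c,1\}})^{|{Z_0}_j|-|Z_j|}$ via Assumption \ref{assumption:alpha_2}, then reuse the Schur-complement, conditional-variance, and beta-min arguments from the multiplicative-prior subset case (Lemma \ref{lm5}) so that the exponential decay $\exp\{-\tfrac{1}{10}\epsilon_0^2 s_n^2 \tfrac{n+2\lambda_1}{2}\}$, guaranteed by Assumption \ref{assumption:s_n}, dominates and yields $PR_j(Z,Z_0) \le p^{-\frac{2c}{\kappa}d}$. The paper's own proof is exactly this two-step reduction (it simply cites ``the similar arguments leading up to (\ref{PR_upbound_subset})''), and your explicit treatment of the column-coupled Beta ratio fills in the detail the paper leaves implicit.
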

\begin{proof} [Proof of Lemma \ref{lm2}]
	Now we move to discuss the scenario when $Z_j$ is a subset of ${Z_0}_j$, i.e., $Z_j \subset {Z_0}_j$. 
	By the similar arguments in (\ref{ratio_B}), it follows from Assumption \ref{assumption:alpha_2} and $|Z_j| < |{Z_0}_j|$, that for a large enough constant $c_1$ 
	and large enough $n$, we have  
	\begin{align} \label{ratio_B_lemma2}
	&M^{\lvert|Z_j| - |{Z_0}_j|\rvert}\left(\frac{\alpha_2 + p/2 - \frac{\sum_{j = 1}^{p-1}|Z_j|}{p-1}}{\alpha_1+\frac{\sum_{j = 1}^{p-1}|Z_j|}{p-1}}\right)^{-(|Z_j| - |{Z_0}_j|)}  \nonumber\\
	\le& (c_1p^{\max\{c,1\}})^{|{Z_0}_j| - |{Z}_j|},
	\end{align}

	It follows from the similar arguments leading up to (\ref{PR_upbound_subset}) that there exists $N_5 > 0$, such that for $n \ge N_5$,
	\begin{align} \label{PR_upbound_subset_scale}
	PR_j(Z, Z_0)  \le p^{-\frac{2c}{\kappa}d}.
	\end{align}
\end{proof}
\begin{lemma} \label{lm3}
	If all the active elements in set $Z_j$ are not contained in the true model ${Z_0}_j$ and all the active elements in set ${Z_0}_j$ are not contained in the true model $Z_j$, denoted as ${Z_0}_j \neq Z_j$, ${Z_0}_j \nsubseteq Z_j$, and ${Z_0}_j \nsupseteq Z_j$, then there exists $N_6$ (not depending on $Z$) such that for $n \geq N_6$ we have $PR_j(Z,Z_0) \le\left(2p\right)^{-\frac {\max\{c,1\}} \kappa (|Z_j| - |Z^*_j|)-\frac{2c}{\kappa}d} \rightarrow 0,  \mbox{ as } n \rightarrow \infty.$
\end{lemma}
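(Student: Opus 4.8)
The plan is to mirror the argument used for the multiplicative prior in Lemma \ref{lm6}, reducing the incomparable case to the two already-settled nested cases (Lemmas \ref{lm1} and \ref{lm2}) through an intermediate model. First I would introduce an auxiliary indicator matrix $Z^*$ whose $j$-th column support is $Z^*_j = Z_j \cap {Z_0}_j$. By construction $Z^*_j \subseteq Z_j$ and $Z^*_j \subseteq {Z_0}_j$, so in graph terms $pa_i(\mathscr{D}^*) \subset pa_i(\mathscr{D})$ and $pa_i(\mathscr{D}^*) \subset pa_i(\mathscr{D}_0)$; moreover, since ${Z_0}_j \nsupseteq Z_j$ we have $Z^*_j \subsetneq Z_j$ and since ${Z_0}_j \nsubseteq Z_j$ we have $Z^*_j \subsetneq {Z_0}_j$, so both containments are strict.

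The central step is the factorization
\begin{align*}
PR_j(Z, Z_0) \le PR_j(Z, Z^*) \times PR_j(Z^*, Z_0),
\end{align*}
established exactly as in (\ref{pp3_scale}) by inserting the intermediate quantities $|\tilde{S}_{Z^*}^{\ge j}|$ and $\tilde{S}_{j|Z^*_j}$. The likelihood portion of $PR_j$ telescopes cleanly: the determinant ratio splits as $|\tilde{S}_{Z_0}^{\ge j}|/|\tilde{S}_Z^{\ge j}| = (|\tilde{S}_{Z^*}^{\ge j}|/|\tilde{S}_Z^{\ge j}|)(|\tilde{S}_{Z_0}^{\ge j}|/|\tilde{S}_{Z^*}^{\ge j}|)$, and both the $(\tilde{S}_{j|{Z_0}_j}/\tilde{S}_{j|Z_j})^{1/2}$ factor and the $(\tfrac n 2+\lambda_1)$-power factor split through $\tilde{S}_{j|Z^*_j}$. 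The prior exponent splits additively via $|Z_j| - |{Z_0}_j| = (|Z_j| - |Z^*_j|) + (|Z^*_j| - |{Z_0}_j|)$. Once the factorization is in place, I would apply Lemma \ref{lm1} to the first factor (legitimate since $Z^*_j \subsetneq Z_j$), giving $PR_j(Z, Z^*) \le (2p)^{-\frac{\max\{c,1\}}{\kappa}(|Z_j| - |Z^*_j|)}$ from (\ref{PR_upbound_1}), and Lemma \ref{lm2} to the second factor (legitimate since $Z^*_j \subsetneq {Z_0}_j$), giving $PR_j(Z^*, Z_0) \le p^{-\frac{2c}{\kappa}d}$ from (\ref{PR_upbound_subset_scale}). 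Multiplying the two bounds yields the claimed estimate, which tends to $0$ because $|Z_j| > |Z^*_j|$ forces a strictly negative exponent.

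The main obstacle I anticipate lies in verifying the prior part of the factorization. Unlike the multiplicative prior, whose prior ratio was a genuine product $\prod_j B(\cdot)/B(\cdot)$ of column-wise terms, the beta-mixture prior couples all columns through the single global edge count $\sum_{j}|Z_j|$ appearing in the base $\big(\alpha_1 + \tfrac{\sum_j|Z_j|}{p-1}\big)\big/\big(\alpha_2 + p/2 - \tfrac{\sum_j|Z_j|}{p-1}\big)$. Consequently $PR_j(Z,Z_0) \le PR_j(Z,Z^*)\,PR_j(Z^*,Z_0)$ is not an exact telescoping of the prior factors, since the base for the $(Z,Z^*)$ comparison uses $\sum_j|Z_j|$ whereas the natural base for the $(Z^*,Z_0)$ comparison uses $\sum_j|Z^*_j|$. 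I would resolve this by observing that under Assumption \ref{assumption:alpha_2} the denominator is dominated by $\alpha_2 \sim p^c$ while $\sum_j|Z_j|/(p-1) \le R_n \sim n/\log n$ by Assumption \ref{assumption:Z}, so the base is $\asymp p^c/(n/\log n)$ \emph{uniformly} in the configuration. This is precisely the bound already exploited in (\ref{ratio_B_lemma1}) and (\ref{ratio_B_lemma2}); hence the per-column prior estimates in Lemmas \ref{lm1} and \ref{lm2} hold with the same constant regardless of whether the comparison is made against $Z$, $Z^*$, or $Z_0$, which suffices to push the factorized bound through after absorbing the slack into the constant $\kappa$.
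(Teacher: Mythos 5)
Your proposal is correct and follows essentially the same route as the paper's own proof: introduce the intermediate model $Z^*$ with $Z^*_j = Z_j \cap {Z_0}_j$, bound $PR_j(Z,Z_0) \le PR_j(Z,Z^*)\,PR_j(Z^*,Z_0)$, and then invoke the bounds (\ref{PR_upbound_1}) and (\ref{PR_upbound_subset_scale}) from the two nested cases. Your additional observation — that the beta-mixture prior couples columns through the global count $\sum_j |Z_j|$, so the factorization is not an exact telescoping and must instead rely on the uniform base bound from (\ref{ratio_B_lemma1}) and (\ref{ratio_B_lemma2}) — is a genuine subtlety the paper's proof passes over in silence, and your resolution of it is the right one.
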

\begin{proof} [Proof of Lemma \ref{lm3}]
	Let $Z^*$ be an arbitrary 0-1 matrix satisfying $Z^*_j = Z_j \cap {Z_0}_j$. Immediately we get $pa_i({{\mathscr{D}}^*} ) \subset pa_i({\mathscr{D}} _0)$ and $pa_i({{\mathscr{D}}^*} ) \subset pa_i({\mathscr{D}})$. It follows from (\ref{m5}) that
	\begin{align} \label{pp3}
	PR_j(Z, Z_0) \le&  M^{\lvert|Z_j| - |{Z_0}_j|\rvert}\left(\frac{\alpha_1+\frac{\sum_{j = 1}^{p-1}|Z_j|}{p-1}}{\alpha_2 + p/2 - \frac{\sum_{j = 1}^{p-1}|Z_j|}{p-1}}\right)^{-(|Z_j| - |{Z_0}_j|)} (n\tau^2)^{-\frac{|Z_j| - |{Z_0}_j|}2}  \nonumber \\
	&\times \frac{|\tilde{S}_{Z_0}^{\ge j}|^{\frac12}}{|\tilde{S}_{Z}^{\ge j}|^{\frac12}}\left(\frac{\tilde{S}_{j|{Z_0}_j}}{\tilde{S}_{j|Z_j}}\right)^{\frac 1 2} \left(\frac{\tilde S_{j|{Z_0}_j}- \frac 1 {n\tau_{n,p}^2}+ \lambda_2}{\tilde S_{j|{Z}_j} - \frac 1 {n\tau_{n,p}^2}+ \lambda_2}\right)^{\frac n 2 + \lambda_1} \nonumber \\
	\le& PR_j(Z, Z^*)\times PR_j(Z^*, Z_0).	
	\end{align}
	\noindent
	Note that $Z^*_j \subset Z_j$. It follows from (\ref{PR_upbound_1}) that 
	\begin{align} \label{PR_neq_1} 
	PR_j(Z, Z^*) \le \left(2p\right)^{-\frac {\max\{c,1\}} \kappa (|Z_j| - |Z^*_j|)}, \mbox{ for some } \kappa > 1 \mbox{ and } n\ge N_4.
	\end{align}
	By (\ref{PR_upbound_subset}) and $Z^*_j \subset {Z_0}_j$, we have
	\begin{equation} 
	PR_j(Z^*, Z_0)  \le  p^{-\frac{2c}{\kappa}d},
	\mbox{ for } n \ge N_5. 
	\end{equation}
	\noindent
	It follows from (\ref{pp3}) and $|Z^*_j| < d$ that
	\begin{align} \label{PR_upbound_neq}
	PR_j(Z, Z_0) \le \left(2p\right)^{-\frac {\max\{c,1\}} \kappa (|Z_j| - |Z^*_j|)-\frac{2c}{\kappa}d}, \mbox{ for } n > N_6 = \max\left\{N_4,N_5\right\}.
	\end{align}
\end{proof}
	For any $Z \neq Z_0$, it follows that there exists at least one $1 \le i \le p -1$, such that $Z_j \neq {Z_0}_j$. Hence, by (\ref{PR_upbound_1}), (\ref{PR_upbound_subset}) and (\ref{PR_upbound_neq}), we have
	\begin{align}
	PR_j(Z, Z_0) \rightarrow 0, \mbox{ as } n \rightarrow \infty.
	\end{align}	
\noindent
The results of Theorem \ref{thm1} and \ref{thm3} can be immediately obtained from Lemma \ref{lm1} to Lemma \ref{lm3} by following the same arguments leading up to (\ref{thm3_proof}).

\section{Discussion} \label{sec:discussion}
In this paper, we investigate the theoretical consistency properties for the high-dimensional sparse DAG models based on the spike and slab prior introduced on the Cholesky parameter and appropriate multiplicative and beta-mixture priors on the indicator probabilities. We establish both posterior ratio consistency and the strong model selection consistency under more general conditions than those in the existing literature. In particular, our consistency result requires much more relaxed conditions on the dimensionality and sparsity. In addition, rather than treating $q$ as a constant and controlling its rate, by either putting an extra layer prior on $q$ or placing the multiplicative prior over the space of $Z$, we avoid the potential issues of the model being stuck in rather sparse space. Finally, the simulation study shows that not only the proposed models yield desired asymptotic consistency, in the same time can also give a better model selection performance.
\bibliographystyle{plainnat}
\bibliography{references}
\end{document}